\documentclass[final]{amsart}

\usepackage{booktabs}
\usepackage{fullpage}
\usepackage{tristan}
\usepackage[giveninits=true,eprint=false,url=false]{biblatex}
\usepackage{subcaption}
\graphicspath{{images/}}
\usepackage{algorithm}
\usepackage{algpseudocode}

\title{Characteristic Sweeps and Source Iteration for Charged-Particle Transport with Continuous Slowing-Down and Angular Scattering}

\author[1]{Ben S. Ashby}

\author[3]{Alex Lukyanov}

\author[1,2]{Tristan Pryer}

\address{$^1$ Institute for Mathematical Innovation\\ University of
  Bath, Bath, UK. $^2$ Department of Mathematical Sciences
  \\ University of Bath, Bath, UK.  $^3$ Department of Mathematics and
  Statistics\\ University of Reading, Reading, UK. }

\begin{document}

\begin{abstract}
  We develop a semi-analytic deterministic framework for
  charged-particle transport with continuous slowing-down in energy
  and angular scattering. Directed transport and energy advection are
  treated by method-of-characteristics integration, yielding explicit
  directional sweeps defined by characteristic maps and inflow
  data. Scattering is incorporated through a fixed-point
  (source-iteration) scheme in which the angular gain is lagged,
  yielding a sequence of decoupled directional solves coupled only
  through angular sums.

  The method is formulated variationally in a transport graph space
  adapted to the charged particle drift. Under standard monotonicity
  and positivity assumptions on the stopping power and boundedness
  assumptions on cross sections, we establish coercivity and
  boundedness of the transport bilinear form, prove contraction of the
  source iteration under a subcriticality condition and derive a
  rigorous a posteriori bound for the iteration error, providing an
  efficient stopping criterion.

  We further analyse an elastic discrete-ordinates approximation,
  including conservation properties and a decomposition of angular
  error into quadrature, cone truncation and finite iteration
  effects. Numerical experiments for proton transport validate the
  characteristic sweep against an exact ballistic benchmark and
  demonstrate the predicted fixed-point convergence under
  forward-peaked scattering. Carbon-ion simulations with tabulated
  stopping powers and a reduced multi-species coupling illustrate
  Bragg peak localisation and distal tail formation driven by
  secondary charged fragments.
\end{abstract}

\maketitle

\section{Introduction}

Charged-particle beam therapies (protons, carbon ions) and radiation
physics are naturally described by a Boltzmann transport equation
(BTE) in position, direction and energy, coupling continuous
slowing-down with angular scattering in tissue. In therapy-relevant
regimes this produces two numerical challenges: strong anisotropy in
angle and a depth-localised Bragg peak generated by energy
loss. Efficient deterministic solvers with quantitative stability and
error control are therefore attractive when repeated forward solves
are required, for example in optimisation, robustness assessment or
uncertainty studies \cite{stammer2024deterministic,yu2025review}.  At
the microscopic level, the dominant mechanisms can be grouped as
ionisation losses, elastic angular deflections and rarer nonelastic
nuclear events. Figure~\ref{fig:atom} shows these schematically for a
proton (with analogous mechanisms for heavier ions).

\begin{figure}[h!]
    \centering
    \renewcommand{\proton}[1]{%
    \shade[ball color=red!80!white, draw=black, line width=0.5pt] (#1) circle (.25);
    \draw[black] (#1) node{$+$};
}

\renewcommand{\neutron}[1]{%
    \shade[ball color=lime!70!green, draw=black, line width=0.5pt] (#1) circle (.25);
}

\renewcommand{\electron}[3]{%
    \draw[rotate = #3, color=gray!60!white, line width=0.8pt] (0,0) ellipse (#1 and #2);
    \shade[ball color=yellow!90!white, draw=black, line width=0.5pt] (0,#2)[rotate=#3] circle (.1);
}

\newcommand{\legendelectron}[1]{%
    \shade[ball color=yellow!90!white, draw=black, line width=0.5pt] (#1) circle (.1);
}

\renewcommand{\nucleus}{%
    \neutron{0.1,0.3}
    \proton{0,0}
    \neutron{0.3,0.2}
    \proton{-0.2,0.1}
    \neutron{-0.1,0.3}
    \proton{0.2,-0.15}
    \neutron{-0.05,-0.12}
    \proton{0.17,0.21}
}

\renewcommand{\inelastic}[2]{
  \proton{#1,#2};
  \draw[->,thick,cyan!80!white](#1+0.5,#2)--(4,-3.7); 
  \draw[->,thick,cyan!80!white](0,-3)--(-0.3,-4); 
  \shade[ball color=yellow] (-0.3,-4) circle (.1); 
}

\renewcommand{\elastic}[2]{
  \proton{#1,#2};
  \draw[->,thick,orange,bend right=90](#1+0.5,#2) to  [out=-30, in=-150] (4,3.);
}

\renewcommand{\protoncollision}[3]{
  \proton{#1,#2};
  \draw[->,thick,red](#1+0.5,#2)--(-0.5,0);%
  \draw[snake=coil, line after snake=0pt, segment aspect=0,%
    segment length=5pt,color=red!80!blue] (0,0)-- +(4,2)%
  node[fill=white!70!yellow,draw=red!50!white, below=.01cm,pos=1.]%
  {$\gamma$};%
  \draw[->,thick,red](#1+0.5,#2)--(-0.5,0);%
  \draw[->,thick,red](0.5,0)--(3.7,-1.8);%
  \neutron{4,-2};  
}

\begin{tikzpicture}[scale=0.5]
    \nucleus
    \electron{1.5}{0.75}{80}
    \electron{1.2}{1.4}{260}
    \electron{4}{2}{30}
    \electron{4}{3}{180}
    \protoncollision{-6.}{0.}{160}
    \inelastic{-6.}{-2.}
    \elastic{-6.}{2.}

      \begin{scope}[shift={(8,-1)}]        
        \draw [thick,rounded corners=2pt] (0,-4) rectangle (8,4); 
        \node at (4, 3.5) {\textbf{Legend}};        
        \proton{0.5, 3}
        \node[anchor=west, font=\footnotesize] at (1.5,3) {Proton}; 
        \neutron{0.5, 2}
        \node[anchor=west, font=\footnotesize] at (1.5,2) {Neutron};         
        \legendelectron{0.5, 1}
        \node[anchor=west, font=\footnotesize] at (1.5,1) {Electron};        
        \draw[->,thick,red] (0.5,0) -- +(1,0);
        \node[anchor=west, font=\footnotesize] at (1.5,0) {Nonelastic collision};        
        \draw[->,thick,cyan!80!white] (0.5,-1) -- +(1,0);
        \node[anchor=west, font=\footnotesize] at (1.5,-1) {Inelastic interaction};         
        \draw[->,thick,orange] (0.5,-2) -- +(1,0);
        \node[anchor=west, font=\footnotesize] at (1.5,-2) {Elastic interaction}; 
        \draw[snake=coil, line after snake=0pt, segment aspect=0,
          segment length=5pt,color=red!80!blue] (0.5,-3) -- +(1,0);
        \node[anchor=west, font=\footnotesize] at (1.5,-3) {prompt-$\gamma$ emission};
      \end{scope}
\end{tikzpicture}
    \caption{\em The three main interactions of a proton with matter.
      A \textcolor{red}{nonelastic} proton--nucleus collision, an
      \textcolor{cyan!80!white}{inelastic} Coulomb interaction with
      atomic electrons and \textcolor{orange}{elastic} Coulomb
      scattering with the nucleus.
      \label{fig:atom}
    }
\end{figure}

Monte Carlo simulation remains the reference approach for
high-fidelity charged-particle transport, but its cost can be
prohibitive when many forward solves are required
\cite{kan2013review}. Deterministic methods (discrete-ordinates,
moment and pencil-beam-type schemes) offer the prospect of faster,
reproducible solves, yet they must address the high dimensionality of
phase space and the strong coupling between energy loss and scattering
\cite{stammer2024deterministic,yu2025review}. Recent semi-analytic
deterministic solvers show that characteristic integration of
continuous slowing-down can deliver fast, reproducible proton
transport in regimes where ionisation dominates
\cite{ashby2025efficient}. A key open difficulty is to retain the same
computational advantages while treating angular redistribution
explicitly and with quantitative control of the scattering coupling.

We present a semi-analytic method-of-characteristics (MoC) framework
for charged-particle transport with continuous slowing-down in energy
and angular scattering. Directed transport and energy advection are
handled by characteristic integration, so each directional sweep
amounts to evaluating an explicit characteristic map together with the
prescribed inflow data. Scattering is included through a
source-iteration (fixed-point) strategy in which the angular gain is
lagged, so that each iteration requires only a collection of
direction-by-direction sweeps coupled through angular sums. This
preserves the sweep structure of ionisation-only characteristic
solvers while adding scattering through an iterative angular mixing
step.

We place the method in a variational setting based on a graph space
adapted to the directional and energy drift. Under monotonicity and
positivity assumptions on the stopping power and boundedness
assumptions on the cross sections, we establish coercivity and
boundedness of the transport bilinear form and obtain well-posedness
of the weak problem. These stability results support convergence of
the source iteration and yield rigorous a posteriori bounds for the
iteration error, giving a practical stopping criterion. For the
discrete-ordinates approximation we further decompose the angular
error into contributions from angular quadrature, cone truncation and
the finite iteration count.

We implement the framework for proton transport with
Bragg--Kleeman-type stopping and for carbon-ion transport using
tabulated stopping powers. In the carbon case we include a reduced
multi-species construction in which a secondary proton and neutron
field is generated by a volumetric source driven by the converged
carbon solution and an effective interaction rate, to provide a
transparent mechanism for post-peak dose tails and to validate the
coupling logic. Numerical experiments verify the characteristic solver
against an analytic benchmark, demonstrate linear convergence of the
source iteration and illustrate the impact of forward-peaked
scattering and the multi-species coupling on depth--dose, including
the distal secondary tail in the carbon simulations.

Charged--particle transport spans rigorous kinetic theory,
high--fidelity stochastic simulation and fast deterministic or
surrogate algorithms.  Foundational treatments of the linear Boltzmann
equation and of transport operators provide the analytical setting for
scattering kernels, conservation, boundary conditions and angular
representations
\cite{case1968linear,lewis1984computational,pomraning1991linear,wilson1988charged}.
Reliable stopping--power and range--energy data underpin dose
prediction across modalities; recommended datasets and reviews
document proton and heavy--ion stopping powers, dosimetry constants
and corrections beyond simple power laws
\cite{berger1982stopping,berger1998stopping,international1993stopping,seltzer2016key,sigmund2009errata}.
Against this backdrop, high dimensionality (space, direction, energy),
strongly forward--peaked scattering and continuous slowing--down
combine to make full BTE solutions challenging in therapy--relevant
geometries and materials.

Monte Carlo remains the reference approach for transport with detailed
physics and is widely implemented in general--purpose toolkits and
transport libraries
\cite{agostinelli2003geant4,goorley2012initial,lux2018monte}. Clinical
dose calculation has benefited from proton--specific MC
implementations and accelerations tailored to voxelised media and beam
modelling \cite{fippel2004monte}. Despite these advances,
computational expense can limit throughput in planning and uncertainty
studies and surveys of grid--based and MC algorithms emphasise the
trade--off between fidelity and speed in clinical settings
\cite{kan2013review}.

Deterministic discretisations provide complementary capabilities.
Discrete--ordinates ($S_N$) and spherical--harmonics ($P_N$/$SP_N$)
methods discretise or expand the angular dependence and are
extensively reviewed for nuclear and radiation applications, including
stability considerations for strongly anisotropic scattering
\cite{yu2025review}.  Pencil--beam and Fokker--Planck reductions offer
further simplification for forward--peaked beams and accuracy analyses
quantify regimes where such approximations capture the parent BTE and
where they break down, for example under large--angle scatter or in
heterogeneous media \cite{borgers1996accuracy}. Algorithmically, fast
beam models for therapy incorporate upstream devices and air gaps and
remain influential for treatment planning workflows
\cite{hong1996pencil}.
In therapy settings, forward--peaked scattering is often represented
through simple parametric phase functions such as the Henyey--Greenstein
kernel \cite{henyey1941diffuse}, which provides a convenient testbed for
discrete--ordinates and iterative scattering solvers.

Recent developments aim to improve scalability and structure
preservation for charged--particle transport. Dynamical low--rank
formulations reduce angular--energy complexity by evolving a factored
representation with collided/uncollided splitting, providing
deterministic alternatives to full $S_N$ in high dimensions
\cite{stammer2024deterministic}. Positivity--preserving finite element
frameworks enforce nonnegativity and conservation in deterministic
dose computation, improving robustness in heterogeneous media and
complex geometries \cite{ashby2025positivity}. Efficient analytic or
semi--analytic solvers have been derived for proton transport by
combining characteristic integration of the slowing--down term with
tractable scattering closures \cite{ashby2025efficient}.  High--order
space--angle--energy discontinuous Galerkin formulations on polytopic
meshes provide a route to accurate deterministic Boltzmann solves on
complex geometries \cite{houston2024efficient}. The resulting large
coupled linear systems motivate scalable iterative solvers and
computable a posteriori algebraic error control
\cite{houston2024iterative}.  For sweeping on complex meshes,
cycle--free strategies on polytopal grids enable DG--style
discretisations with guaranteed orderings, extending transport solvers
beyond structured meshes \cite{calloo2025cycle,Evans2026}.

There is parallel progress in modelling, inversion and uncertainty. A
unified operator--theoretic perspective links transport descriptions
to planning objectives and controls, clarifying the interface between
forward models, adjoints and optimisation in proton therapy
\cite{kyprianou2025unified}. Stochastic differential equation
formulations provide mesoscopic descriptions with
structure--preserving discretisations encoding geometry, energy loss,
angular diffusion and sensitivity
\cite{crossley2025jump,dean2025fast,chronholm2025geometry}. Data--driven
surrogates with uncertainty quantification offer rapid dose emulation
with quantified predictive spread \cite{pim2025surrogate}. Bayesian
inverse formulations demonstrate how prompt gamma or related signals
can be used to verify delivered dose fields, integrating transport
models with statistical inference for on--line quality assurance
\cite{cox2024bayesian}.

The semi--analytic approach developed here sits between high--fidelity
MC transport and heavily reduced beam models. It retains an explicit
angular description of scattering while exploiting characteristic
integration of continuous slowing--down in energy and it is designed
to interface naturally with conservation--aware discretisations and
with uncertainty or inverse formulations. This positioning is
consistent with the classical transport literature
\cite{case1968linear,lewis1984computational,pomraning1991linear,wilson1988charged},
the data foundations for stopping power and dosimetry
\cite{berger1982stopping,berger1998stopping,international1993stopping,seltzer2016key,sigmund2009errata},
and contemporary advances in scalable deterministic and hybrid
transport algorithms
\cite{yu2025review,stammer2024deterministic,ashby2025efficient,ashby2025positivity,calloo2025cycle},
with practical relevance for treatment planning and verification.
From a numerical-transport perspective, the present work also draws on
classical analyses of inflow trace spaces for kinetic transport
\cite{cessenat1984theoremes,cessenat1985theoremes} and on the
extensive literature on source iteration and its convergence
properties in discrete ordinates transport
\cite{adams2002fast,miller2002convergence}.

The rest of the paper is set out as follows. In \S\ref{sec:models} we
introduce the charged--particle transport models considered in this
work, define the phase--space setting and specify the scattering and
slowing--down structure that motivates the subsequent analysis. In
\S\ref{sec:variational} we derive a variational formulation in an
appropriate graph space, establish coercivity and boundedness of the
continuous transport bilinear form and prove convergence of the
continuum source iteration. In \S\ref{sec:angular} we develop the
discrete--ordinates angular approximation and discuss conservation
properties of the discrete scattering operator. In
\S\ref{sec:errorcontrol} we give error control that separates angular
quadrature, cone truncation and iteration effects.  In
\S\ref{sec:numericsproton} we present numerical experiments for proton
transport, including an exact benchmark for the ballistic limit,
convergence tests for the source iteration and simulations with
Henyey--Greenstein scattering. In \S\ref{sec:numericscarbon} we extend
the implementation to carbon ions using tabulated stopping powers and
a reduced multi--species construction generating secondary protons and
neutrons, and we illustrate the resulting depth--dose behaviour
including distal tail formation beyond the primary carbon peak.

\section{Charged Particle Transport Models}
\label{sec:models}

We consider transport of charged particles through a medium, governed
by directed motion, ionising energy loss and scattering due to elastic
and nonelastic interactions. Let $D \subset \mathbb{R}^d$ with
$d \in \{2,3\}$ denote the spatial domain, and let
$I = [E_{\min},E_{\max}] \subset (0,\infty)$ be the admissible energy
interval. The phase-space domain is $\Omega = D \times I \times
\mathbb{S}^{d-1}$, where $\vec{\omega} \in \mathbb{S}^{d-1}$ is the
unit direction of travel. The surface measure on $\mathbb{S}^{d-1}$ is
denoted $\d\vec{\omega}$.

For a given particle species, the phase-space density
$\psi(x,E,\vec{\omega})$ satisfies the linear Boltzmann transport
equation
\begin{equation}
  \label{eq:BTE-general}
  \vec{\omega}\cdot\nabla_x \psi(x,E,\vec{\omega})
  + \sigma_T(E)\psi(x,E,\vec{\omega})
  =
  \mathcal{K}[\psi](x,E,\vec{\omega}),
\end{equation}
where $\sigma_T(E)\ge 0$ is the total removal cross-section and
$\mathcal{K}$ is the gain operator
\begin{equation}
  \label{eq:Kop-general}
  \mathcal{K}[\psi](x,E,\vec{\omega})
  :=
  \int_{\mathbb{S}^{d-1}} \int_{I}
  \sigma_S(\vec{\omega},\vec{\omega}',E' \to E)
  \psi(x,E',\vec{\omega}')  \d E'  \d\vec{\omega}'.
\end{equation}
The differential kernel $\sigma_S \ge 0$ redistributes particles from
$(E',\vec{\omega}')$ to $(E,\vec{\omega})$. The quantity $\sigma_T(E)$
is taken as given so that the loss term $\sigma_T(E)\psi$ controls
$\mathcal{K}$ in the energy norm introduced later.

The kernel $\sigma_S$ contains distinct physical mechanisms. We write
\begin{equation}
  \label{eq:kernel-decomp}
  \sigma_S(\vec{\omega},\vec{\omega}',E' \to E)
  =
  \sigma_{\text{el}}(\vec{\omega},\vec{\omega}',E) \delta(E-E')
  +
  \sigma_{\text{ion}}(\vec{\omega},\vec{\omega}',E' \to E)
  +
  \sigma_{\text{non}}(\vec{\omega},\vec{\omega}',E' \to E),
\end{equation}
where $\sigma_{\text{el}}$ models elastic Coulomb scattering, which is
energy-conserving, $\sigma_{\text{ion}}$ models ionisation losses and
$\sigma_{\text{non}}$ models genuinely nonelastic interactions that can
involve larger energy transfers.

In the energy regime of interest here, ionisation energy loss arises
from a large number of inelastic collisions along each particle track,
with each collision typically transferring only a small amount of
energy. In this setting it is standard to replace the detailed
ionisation energy-loss redistribution by a continuous slowing-down
approximation, in which the cumulative effect of ionisation is
represented by an energy advection term with stopping power $S(E)$.
Neglecting higher-order energy-diffusion (straggling) corrections, this
leads to the charged-particle transport model
\begin{equation}
  \label{eq:BTE}
  \vec{\omega}\cdot\nabla_x \psi(x,E,\vec{\omega})
  - \partial_E\big(S(E)\psi(x,E,\vec{\omega})\big)
  + \sigma_T(E)\psi(x,E,\vec{\omega})
  =
  \mathcal{K}[\psi](x,E,\vec{\omega}),
\end{equation}
where, after applying the slowing down approximation, $\mathcal{K}$
collects the remaining gain contributions, including elastic angular
redistribution and any nonelastic energy-changing events retained in
the model.

In regimes where small-angle Coulomb scattering dominates, the elastic
gain can be approximated by angular diffusion on the sphere via the
Laplace--Beltrami operator $\Delta_{\vec{\omega}}$,
\begin{equation}
  \label{eq:FP-approx}
  \int_{\mathbb{S}^{d-1}}
  \sigma_{\text{el}}(\vec{\omega},\vec{\omega}',E)
  \psi(x,E,\vec{\omega}')  \d\vec{\omega}'
  \approx
  \varepsilon(E)\Delta_{\vec{\omega}}\psi(x,E,\vec{\omega}),
\end{equation}
with $\varepsilon(E)\ge 0$ an energy-dependent angular-diffusion
coefficient \cite{ashby2025positivity}.

Throughout the analysis we adopt the following regularity and
positivity conditions, consistent with the variational framework
developed later: $S \in W^{1,\infty}(I)$ with $S(E)>0$ and $S'(E)\le 0$
on $I$, $\sigma_T \in L^{\infty}(I)$ with $\sigma_T(E)\ge 0$, and
$\sigma_S \in L^{\infty}(\mathbb{S}^{d-1} \times \mathbb{S}^{d-1}
\times I \times I)$ with $\sigma_S\ge 0$. These ensure that the gain
operator \eqref{eq:Kop-general} is bounded on the graph space used for
the weak formulation and that the transport part of \eqref{eq:BTE} is
coercive once inflow boundary conditions are imposed.

\subsection{Stopping power and the Bragg--Kleeman approximation}

Under the continuous slowing down approximation, ionisation losses are
represented in \eqref{eq:BTE} by the stopping power $S(E)$. For light
ions such as protons the dominant mechanism is electronic stopping due
to Coulomb interactions with atomic electrons. A standard empirical
model is the Bragg--Kleeman rule: given constants $\alpha>0$ and
$p\in[1,2]$,
\begin{equation}\label{eq:BraggKleeman}
  S(E)  =  \frac{1}{\alpha p} E^{1-p}.
\end{equation}
This follows from the range--energy relation $R(E)=\alpha E^p$ and is
well validated in the therapeutic window for protons. In particular,
on any admissible interval $I=[E_{\min},E_{\max}] \subset (0,\infty)$
one has $S\in W^{1,\infty}(I)$, $S(E)>0$ and $S'(E)\le 0$, properties
used below to obtain coercivity of the transport operator. One can see
in Figure \ref{fig:proton_fits} how this ansatz is very accurate for
clinically relevant proton energies since ionisation dominates
nonelastic processes.

A more physically motivated alternative is the Bethe--Bloch formula for
the electronic stopping power, which models the mean ionisation loss in
terms of projectile speed and material parameters. It provides a
standard reference across ion species and, with suitable effective
parameters, can be used as an accurate surrogate for electronic
stopping in the therapeutic energy range. In the present work we do not
pursue a full Bethe--Bloch parameterisation and instead adopt the
Bragg--Kleeman approximation \eqref{eq:BraggKleeman} as the simplest
closed-form model that is sufficient for our analysis and numerical
examples.

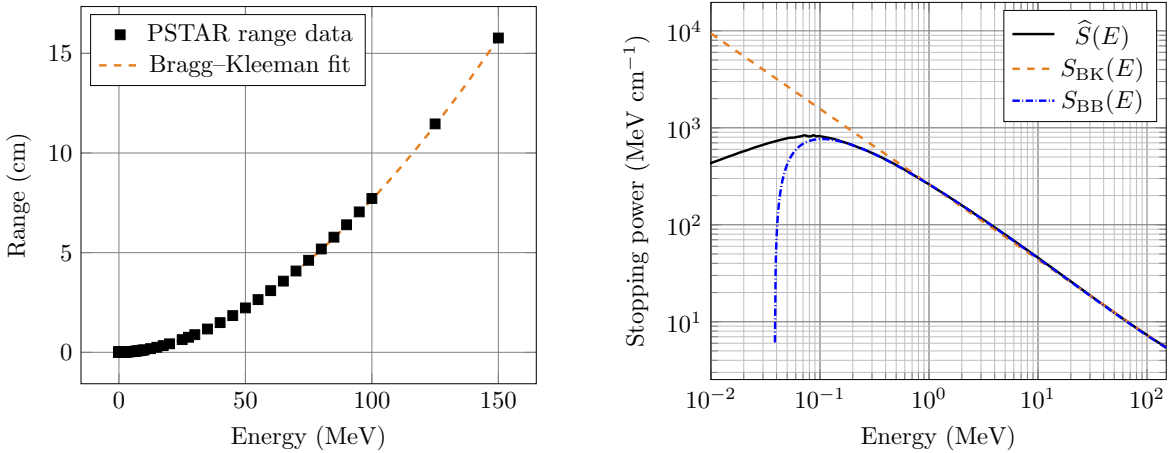
\begin{figure}[h!]
  \centering
  \pgfmathsetmacro{\EminStop}{0.01}
  \pgfmathsetmacro{\EmaxStop}{150}
  \pgfmathsetmacro{\alphaBK}{2.147e-3}
  \pgfmathsetmacro{\pBK}{1.777}

  \definecolor{bkorange}{RGB}{230,126,34}

  \pgfplotstableread[col sep=comma]{proton_range_data.csv}\protonrangedata
  \pgfplotstableread[col sep=comma]{proton_stopping_bk.csv}\protonstopbk
  \pgfplotstableread[col sep=comma]{proton_stopping_bethe.csv}\protonstopbb

  \begin{subfigure}{.49\linewidth}
    \begin{tikzpicture}[scale=0.93]
      \begin{axis}[
        width=\linewidth,
        grid=both,
        major grid style={black!50},
        xlabel={Energy (MeV)},
        ylabel={Range (cm)},
        legend style={at={(0.02,0.98)},anchor=north west},
      ]

        \addplot[
          only marks,
          mark=square*,
          mark options={scale=1, solid},
          color={black!100}
        ]
        table[x=Energy_MeV, y=Range_cm]{\protonrangedata};
        \addlegendentry{PSTAR range data}

        \addplot[
          dashed,
          color=bkorange,
          line width=1.0,
          domain=0.001:150,
          samples=300
        ]{\alphaBK * x^(\pBK)};
        \addlegendentry{Bragg--Kleeman fit}

      \end{axis}
    \end{tikzpicture}
  \end{subfigure}
  \begin{subfigure}{.49\linewidth}
    \begin{tikzpicture}[scale=0.93]
      \begin{axis}[
        width=\linewidth,
        xmode=log,
        ymode=log,
        grid=both,
        major grid style={black!50},
        xlabel={Energy (MeV)},
        ylabel={Stopping power (MeV cm$^{-1}$)},
        legend style={at={(0.98,0.98)},anchor=north east},
        xmin=\EminStop, xmax=\EmaxStop,
      ]

        \addplot[
          solid,
          color={black!100},
          line width=1.0
        ]
        table[x=Energy_MeV, y=S_hat]{\protonstopbk};
        \addlegendentry{$\widehat S(E)$}

        \addplot[
          dashed,
          color=bkorange,
          line width=1.0
        ]
        table[x=Energy_MeV, y=S_BK]{\protonstopbk};
        \addlegendentry{$S_{\mathrm{BK}}(E)$}

        \addplot[
          densely dashdotted,
          color=blue,
          line width=1.0
        ]
        table[x=Energy_MeV, y=S_BB]{\protonstopbb};
        \addlegendentry{$S_{\mathrm{BB}}(E)$}

      \end{axis}
    \end{tikzpicture}
  \end{subfigure}

  \caption{Left: proton range--energy data in water from PSTAR with
    Bragg--Kleeman fit $R_{\mathrm{BK}}(E)=\alpha E^p$. Right:
    stopping power reconstructed from the same range data via
    $\widehat S(E)=(\mathrm dR/\mathrm dE)^{-1}$, obtained by
    differentiating a cubic spline fit of $R(E)$, together with the
    Bragg--Kleeman stopping power $S_{\mathrm{BK}}(E)=\big(\alpha p
    E^{p-1}\big)^{-1}$ and a Bethe--Bloch stopping power
    $S_{\mathrm{BB}}(E)$. Data taken from PSTAR \cite{berger2005star}.}
  \label{fig:proton_fits}
\end{figure}

\subsection{Data-informed stopping for heavy ions}

For heavier ions, nuclear and other nonelastic processes become
non-negligible as energy decreases, leading to systematic deviations
from the Bragg--Kleeman form \eqref{eq:BraggKleeman}. These deviations
are not remedied by changing the electronic stopping model alone (for
example by switching to a Bethe--Bloch form), since they reflect
genuinely nonelastic contributions to the total stopping in the
slowing-down regime.  As can be seen in Figure
\ref{fig:carbon_stopping} for carbon ions this effect is evident at
low energies, reflecting fragmentation and secondary production. In
this regime we take $S(E)$ to be a data-informed stopping
power obtained directly from range--energy and stopping data, rather
than enforcing the parametric form \eqref{eq:BraggKleeman}.

\begin{figure}[h!]
  \centering

  \pgfmathsetmacro{\EminRange}{0.2}
  \pgfmathsetmacro{\EmaxRange}{140}
  \pgfmathsetmacro{\EminStop}{0.2}
  \pgfmathsetmacro{\EmaxStop}{140}

  \pgfmathsetmacro{\alphaBK}{1.470699*10^(-5)}
  \pgfmathsetmacro{\pBK}{1.664787}

  \definecolor{bkorange}{RGB}{230,126,34}

  \pgfplotstableread[col sep=comma]{carbon_range_data.csv}\carbonrangedata
  \pgfplotstableread[col sep=comma]{carbon_stopping_bk.csv}\carbonstopbk
  \pgfplotstableread[col sep=comma]{carbon_stopping_bethe.csv}\carbonstopbb

  \begin{subfigure}{.49\linewidth}
    \begin{tikzpicture}[scale=0.93]
      \begin{axis}[
        width=\linewidth,
        grid=both,
        major grid style={black!50},
        xlabel={Energy (MeV/u)},
        ylabel={Range (cm)},
        legend style={at={(0.02,0.98)},anchor=north west},
        xmin=\EminRange, xmax=\EmaxRange,
        use fpu=true,
      ]

        \addplot[
          only marks,
          mark=square*,
          mark options={scale=1, solid},
          color={black!100}
        ]
        table[x=Energy_MeV, y=Range_cm]{\carbonrangedata};
        \addlegendentry{ICRU range data}

\pgfplotstableread[col sep=comma]{carbon_range_fit.csv}\carbonrangefit

\addplot[
  dashed,
  color=bkorange,
  line width=1.0
]
table[x=Energy_MeV, y=R_BK]{\carbonrangefit};
\addlegendentry{Bragg--Kleeman fit}

      \end{axis}
    \end{tikzpicture}
  \end{subfigure}
  \begin{subfigure}{.49\linewidth}
    \begin{tikzpicture}[scale=0.93]
      \begin{axis}[
        width=\linewidth,
        xmode=log,
        ymode=log,
        grid=both,
        major grid style={black!50},
        xlabel={Energy (MeV/u)},
        ylabel={Stopping power (MeV/u cm$^{-1}$)},
        legend style={at={(0.98,0.98)},anchor=north east},
        xmin=\EminStop, xmax=\EmaxStop,
      ]

        \addplot[
          solid,
          color={black!100},
          line width=1.0
        ]
        table[x=Energy_MeV, y=S_hat]{\carbonstopbk};
        \addlegendentry{$\widehat S(E)$}

        \addplot[
          dashed,
          color=bkorange,
          line width=1.0
        ]
        table[x=Energy_MeV, y=S_BK]{\carbonstopbk};
        \addlegendentry{$S_{\mathrm{BK}}(E)$}

        \addplot[
          densely dashdotted,
          color=blue,
          line width=1.0
        ]
        table[x=Energy_MeV, y=S_BB]{\carbonstopbb};
        \addlegendentry{$S_{\mathrm{BB}}(E)$}

      \end{axis}
    \end{tikzpicture}
  \end{subfigure}

  \caption{Left: carbon range--energy data from ICRU with
    Bragg--Kleeman fit $R_{\mathrm{BK}}(E)=\alpha E^p$ (energy in
    MeV/u). Right: stopping power reconstructed from the same range
    data via $\widehat S(E)=(\mathrm dR/\mathrm dE)^{-1}$, obtained by
    differentiating a cubic spline fit of $R(E)$, together with the
    Bragg--Kleeman stopping power $S_{\mathrm{BK}}(E)=\big(\alpha p
    E^{p-1}\big)^{-1}$ and a Bethe--Bloch-shaped surrogate
    $S_{\mathrm{BB}}(E)$. Data taken from ICRU
    \cite{icru2005report73}.}
  \label{fig:carbon_stopping}
\end{figure}
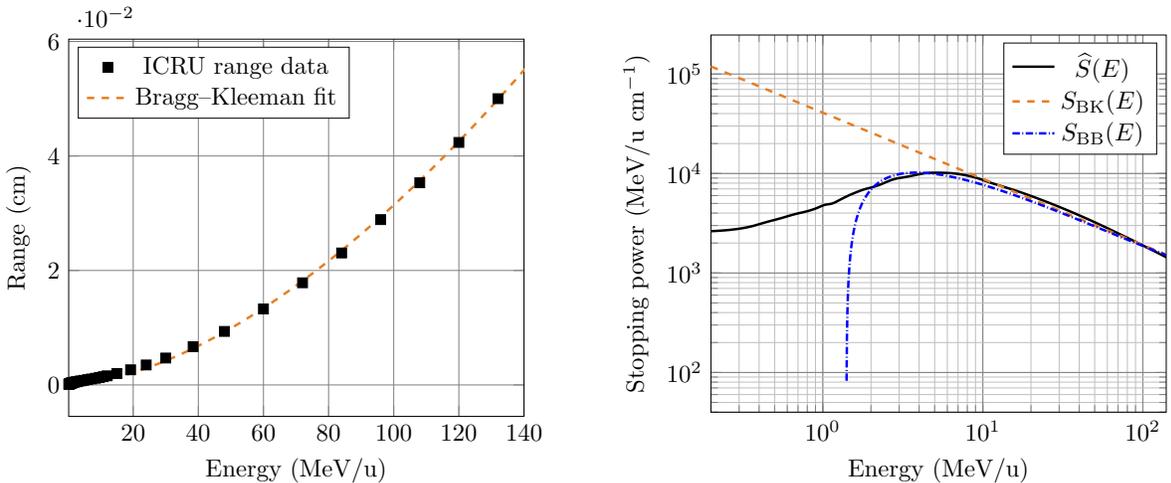

For the analysis we work on an interval $I=[E_{\min},E_{\max}]$ bounded
away from $0$ and assume that the fitted $S$ satisfies
$S\in W^{1,\infty}(I)$, $S(E)>0$ and $S'(E)\le 0$ on $I$. In practice
this is achieved by using a smooth monotone fit of the tabulated data
over the energies of interest.

\subsection{Boundary conditions}

Since $S(E)>0$ on $I$, the energy drift term $-\partial_E(S\psi)$ induces
a one-way characteristic flow from $E_{\max}$ down to $E_{\min}$. It is
therefore natural to prescribe data both on the spatial inflow boundary
and on the top-energy boundary.

Let $\partial(D\times I)=(\partial D\times I)\cup(D\times\partial I)$ and
write the outward unit normal on $\partial(D\times I)$ as
$\vec n_{(\vec x,E)}:=(\vec n_{\vec x},n_E)$, where $\vec n_{\vec x}$ is
the outward unit normal on $\partial D$, and $n_E=+1$ on $E=E_{\max}$ and
$n_E=-1$ on $E=E_{\min}$. For each fixed $\vec\omega\in\mathbb S^{d-1}$ we
define the inflow and outflow boundaries in $(\vec x,E)$ by
\[
  \Gamma_\pm(\vec\omega)
  :=
  \Big\{(\vec x,E)\in\partial(D\times I) : 
  \pm\big(\vec\omega\cdot\vec n_{\vec x}-S(E) n_E\big)>0\Big\},
\]
and the corresponding phase-space boundary sets by
\[
  \Gamma_\pm
  :=
  \big\{(\vec x,E,\vec\omega)\in \partial(D\times I)\times\mathbb S^{d-1} : 
  (\vec x,E)\in \Gamma_\pm(\vec\omega)\big\}.
\]

This recovers the usual decomposition into spatial and energy inflow/outflow.
On the spatial boundary $\partial D\times I$ one has $n_E=0$, so
\[
  \Gamma_-^{\vec x}
  :=
  \{(\vec x,E,\vec\omega)\in(\partial D\times I)\times\mathbb S^{d-1}:\ \vec\omega\cdot\vec n_{\vec x}<0\},
  \qquad
  \Gamma_+^{\vec x}
  :=
  \{(\vec x,E,\vec\omega)\in(\partial D\times I)\times\mathbb S^{d-1}:\ \vec\omega\cdot\vec n_{\vec x}>0\}.
\]
On the energy boundary $D\times\partial I$ one has $\vec n_{\vec x}=\vec 0$,
so the flux associated with $-\partial_E(S\psi)$ is $-S(E) n_E \psi$.
Since $S(E)>0$, the top boundary $E=E_{\max}$ is inflow and the bottom
boundary $E=E_{\min}$ is outflow. We set
\[
  \Gamma_-^{E} := D\times\{E_{\max}\}\times\mathbb S^{d-1},
  \qquad
  \Gamma_+^{E} := D\times\{E_{\min}\}\times\mathbb S^{d-1},
\]
so that
\[
  \Gamma_- = \Gamma_-^{\vec x}\cup \Gamma_-^{E},
  \qquad
  \Gamma_+ = \Gamma_+^{\vec x}\cup \Gamma_+^{E}.
\]

On $\Gamma_-$ we prescribe inflow data
\begin{equation}\label{eq:inflowBC}
  \psi = g \quad \text{on }\Gamma_-.
\end{equation}
We assume $g$ is square-integrable with respect to the natural flux
weights induced by the boundary term $\vec\omega\cdot\vec n_{\vec x}-S(E)n_E$,
namely $|\vec\omega\cdot\vec n_{\vec x}|$ on $\Gamma_-^{\vec x}$ and
$S(E_{\max})$ on $\Gamma_-^{E}$:
\[
  g\in L^2(\Gamma_-^{\vec x}, |\vec\omega\cdot \vec n_{\vec x}| \d\gamma_{\vec x} \d E \d\vec\omega)
  \ \cap\
  L^2(\Gamma_-^{E}, S(E_{\max}) \d\vec x \d\vec\omega).
\]
No boundary condition is imposed on $\Gamma_+$.

\section{Variational Formulation and Functional Setting}
\label{sec:variational}

To analyse well-posedness and enable discretisation, we adopt a
variational formulation. The first-order transport and energy-advection
terms, together with the non-symmetric gain operator, motivate a graph
space that captures directional and energy drift.

We work in the graph space
\begin{equation}
  \label{eq:Uspace}
  \mathcal{U}
  :=
  \Big\{ u \in L^2(\Omega)  : 
  \vec{\omega}\cdot\nabla_x u \in L^2(\Omega),
  \ \partial_E\qp{S(E)u} \in L^2(\Omega) \Big\}.
\end{equation}

For each fixed $\vec{\omega} \in \mathbb{S}^{d-1}$, define the
directional bilinear form
\begin{multline}
  \label{eq:aform}
  a(u,v;\vec{\omega})
  :=
  \int_D \int_I
  \Big[
    \qp{\vec{\omega}\cdot\nabla_x u}  v
    -
    \partial_E\qp{S(E)u}  v
    +
    \sigma_T(E) u v
  \Big] \d E \d\vec{x}
  \\
  + \frac{1}{2}
  \int_{\Gamma_+(\vec{\omega})}
  \qp{\vec{\omega}\cdot\vec{n}_x - S(E) n_E} u v  \d s,
\end{multline}
and the scattering bilinear form
\begin{equation}
  \label{eq:sform}
  s(u,v;\vec{\omega})
  :=
  \int_D \int_I
  v\qp{\vec{x},E,\vec{\omega}}
  \int_{\mathbb{S}^{d-1}} \int_I
  \sigma_S\qp{\vec{\omega},\vec{\omega}',E' \to E} 
  u\qp{\vec{x},E',\vec{\omega}'}  \d E' \d\vec{\omega}'
   \d E \d\vec{x}.
\end{equation}
The linear form is
\begin{equation}
  \label{eq:lform}
  l(v;\vec{\omega})
  :=
  -\frac{1}{2}
  \int_{\Gamma_-(\vec{\omega})}
  \qp{\vec{\omega}\cdot\vec{n}_x - S(E) n_E} 
  g\qp{\vec{x},E,\vec{\omega}}  v\qp{\vec{x},E,\vec{\omega}}  \d s
  +
  \int_D \int_I
  f\qp{\vec{x},E,\vec{\omega}}  v\qp{\vec{x},E,\vec{\omega}}
   \d E \d\vec{x},
\end{equation}
where $g$ is the inflow boundary datum on $\Gamma_-(\vec{\omega})$ and
$f \in L^2(\Omega)$ is a given volumetric source. In the present paper
$f$ will be used to represent any modelling corrections or additional
physics that are treated explicitly rather than embedded in the drift.

The weak problem reads: find $u \in \mathcal{U}$ such that
\begin{equation}
  \label{eq:weak}
  \int_{\mathbb{S}^{d-1}} a(u,v;\vec{\omega}) \d\vec{\omega}
  =
  \int_{\mathbb{S}^{d-1}}
  \Big[ s(u,v;\vec{\omega}) + l(v;\vec{\omega}) \Big] \d\vec{\omega}
  \qquad \text{for all } v \in \mathcal{U}.
\end{equation}
In the stability and contraction arguments below we apply the coercivity
estimate to functions with homogeneous inflow trace on $\Gamma_-$ (in
particular to source-iteration differences, for which the inflow datum
cancels since $g$ is fixed in $l$).

\begin{theorem}[Coercivity and boundedness of the continuous bilinear forms]
\label{the:coercivity}
Assume $S\in W^{1,\infty}(I)$ with $S(E)>0$ and $S'(E)\le 0$ on $I$,
$\sigma_T\in L^\infty(I)$ with $\sigma_T(E)\ge 0$, and $\sigma_S\in
L^\infty(\mathbb{S}^{d-1}\times\mathbb{S}^{d-1}\times I\times I)$ with
$\sigma_S\ge 0$.

Define the coercivity norm
\begin{equation*}
  \|u\|_{\mathcal{U}}^2
  :=
  \|\sigma_T^{1/2}u\|_{L^2(\Omega)}^2
  + \|(-S'(E))^{1/2}u\|_{L^2(\Omega)}^2
  + \int_{\Gamma_+} \qp{\vec{\omega}\cdot\vec{n}_{\vec x} - S(E) n_E} u^2 \d s,
\end{equation*}
and the stronger norm
\begin{equation*}
  \|u\|_{\mathcal{U}^*}^2
  :=
  \|u\|_{\mathcal{U}}^2
  + \|\nabla_{\vec x} u\|_{L^2(\Omega)}^2
  + \|\partial_E u\|_{L^2(\Omega)}^2.
\end{equation*}
Assume moreover that
\begin{equation*}
  \mu := \essinf_{E\in I}\qp{-S'(E)} > 0,
\end{equation*}
so that $\|w\|_{L^2(\Omega)} \le
\mu^{-1/2}\|(-S')^{1/2}w\|_{L^2(\Omega)} \le
\mu^{-1/2}\|w\|_{\mathcal{U}}$.

Then the bilinear forms $a(u,v;\vec{\omega})$ and $s(u,v;\vec{\omega})$
defined in Section~\ref{sec:variational} satisfy the following:

\begin{enumerate}
\item
  Coercivity. If $u=0$ on $\Gamma_-$, then
  \begin{equation*}
    \int_{\mathbb{S}^{d-1}} a(u,u;\vec{\omega}) \d\vec{\omega}
    \ge
    \tfrac{1}{2}\|u\|_{\mathcal{U}}^2.
  \end{equation*}
  
\item Boundedness. There exists $C_a>0$, depending only on
  $\|S\|_{W^{1,\infty}(I)}$, $\|\sigma_T\|_{L^\infty(I)}$ and $\mu$, such
  that for all $u\in\mathcal{U}^*$ and all $v\in\mathcal{U}$,
  \begin{equation*}
    \Big|\int_{\mathbb{S}^{d-1}} a(u,v;\vec{\omega}) \d\vec{\omega}\Big|
    \le
    C_a \|u\|_{\mathcal{U}^*}\|v\|_{\mathcal{U}}.
  \end{equation*}
  Moreover there exists $C_s>0$, depending only on
  $\|\sigma_S\|_{L^\infty}$ and the measures of $I$ and $\mathbb{S}^{d-1}$,
  such that
  \begin{equation*}
    \Big|\int_{\mathbb{S}^{d-1}} s(u,v;\vec{\omega}) \d\vec{\omega}\Big|
    \le
    C_s \|u\|_{L^2(\Omega)}\|v\|_{L^2(\Omega)}
    \le
    C_s \mu^{-1}\|u\|_{\mathcal{U}}\|v\|_{\mathcal{U}}.
  \end{equation*}
\end{enumerate}

\end{theorem}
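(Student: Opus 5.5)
For coercivity I will integrate by parts in the first-order terms of $a(u,u;\vec\omega)$ and use the boundary term to complete the outflow flux. Fix $\vec\omega$ and take $u$ smooth enough, then argue by density in $\mathcal U$.

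The transport term gives
\[
\int_D\int_I (\vec\omega\cdot\nabla_x u)\,u = \tfrac12\int_{\partial D\times I}(\vec\omega\cdot\vec n_{\vec x})u^2,
\]
since $\vec\omega$ is constant in $\vec x$. For the energy term, I write $-\partial_E(Su)u = -S'u^2 - S u\,\partial_E u = -S'u^2 - \tfrac12 S\,\partial_E(u^2)$. Integrating by parts in $E$ then gives
\[
-\int_I \partial_E(Su)u\,\d E = -\tfrac12\int_I S' u^2\,\d E - \tfrac12\big[S u^2\big]_{E_{\min}}^{E_{\max}} .
\]
Adding the two boundary contributions reproduces $\tfrac12\int_{\partial(D\times I)}(\vec\omega\cdot\vec n_{\vec x}-S n_E)u^2\,\d s$. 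Since $u=0$ on $\Gamma_-$, this reduces to the integral over $\Gamma_+(\vec\omega)$. The explicit boundary term in $a$ contributes another $\tfrac12$ of the same integral, so the full flux weight over $\Gamma_+$ appears.

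Hence
\[
a(u,u;\vec\omega) = \|\sigma_T^{1/2}u\|^2 + \tfrac12\|(-S')^{1/2}u\|^2 + \int_{\Gamma_+(\vec\omega)}(\cdots)u^2 .
\]
This is at least $\tfrac12$ of each piece of $\|u\|_{\mathcal U}^2$ after integrating over $\vec\omega$; the limiting factor $\tfrac12$ comes from the $-S'$ term. The positivity $-S'\ge0$ and $\sigma_T\ge0$ are what make every term nonnegative.

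For boundedness of $a$ I estimate term by term with Cauchy--Schwarz.

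\textbf{Transport term.} $|\int(\vec\omega\cdot\nabla_x u)v|\le \|\nabla_x u\|\,\|v\|\le \mu^{-1/2}\|u\|_{\mathcal U^*}\|v\|_{\mathcal U}$, using $|\vec\omega|=1$ and the stated Poincaré-type bound $\|v\|\le\mu^{-1/2}\|v\|_{\mathcal U}$.

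\textbf{Energy term.} Expand $\partial_E(Su)=S'u+S\partial_E u$. This is bounded by $\|S\|_{W^{1,\infty}}(\|u\|+\|\partial_E u\|)\|v\|$, and the same $\mu^{-1/2}$ conversion applies to both $u$ and $v$.

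\textbf{Reaction term.} $|\int\sigma_T uv|\le \|\sigma_T^{1/2}u\|\,\|\sigma_T^{1/2}v\|$.

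\textbf{Boundary term.} Since the weight $\vec\omega\cdot\vec n_{\vec x}-Sn_E$ is positive on $\Gamma_+$, weighted Cauchy--Schwarz bounds it by the product of the $\Gamma_+$ parts of $\|u\|_{\mathcal U}$ and $\|v\|_{\mathcal U}$.

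Collecting these gives $C_a$ depending on $\|S\|_{W^{1,\infty}}$, $\|\sigma_T\|_{L^\infty}$ and $\mu$.

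For $s$ I bound the inner integral pointwise:
\[
\Big|\int\!\!\int\sigma_S u\Big|\le\|\sigma_S\|_{L^\infty}\,|\mathbb S^{d-1}|^{1/2}|I|^{1/2}\,\|u(\vec x,\cdot,\cdot)\|_{L^2(I\times\mathbb S^{d-1})}.
\]
Multiplying by $v$ and integrating over $(\vec x,E,\vec\omega)$, then applying Cauchy--Schwarz again, gives $C_s=\|\sigma_S\|_{L^\infty}|\mathbb S^{d-1}||I|$. Applying $\|w\|\le\mu^{-1/2}\|w\|_{\mathcal U}$ twice then produces the factor $\mu^{-1}$.

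The main obstacle is justifying the integration by parts and the traces for $u\in\mathcal U$, which is only a graph space. I would handle this by appealing to density of smooth functions in the graph norm and the Cessenat trace theory cited in the introduction. That theory gives well-defined traces in the weighted $L^2(\Gamma_\pm)$ spaces, so the Green identity extends by continuity. For boundedness I take $u\in\mathcal U^*$, where the full gradient makes everything elementary.
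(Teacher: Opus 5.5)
Your proposal is correct and follows essentially the same route as the paper. You integrate by parts in $\vec x$ and $E$ to get the flux $\tfrac12\int_{\partial(D\times I)}(\vec\omega\cdot\vec n_{\vec x}-S n_E)u^2$, which combines with the explicit $\Gamma_+$ term to give $a(u,u;\vec\omega)=\|\sigma_T^{1/2}u\|^2+\tfrac12\|(-S')^{1/2}u\|^2+\int_{\Gamma_+(\vec\omega)}(\vec\omega\cdot\vec n_{\vec x}-S n_E)u^2$; you then use term-by-term Cauchy--Schwarz with the $\mu^{-1/2}$ conversion for $a$, and a pointwise Cauchy--Schwarz bound on the gain for $s$. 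The only differences are cosmetic: you bound the reaction term by $\|\sigma_T^{1/2}u\|\,\|\sigma_T^{1/2}v\|$ instead of $\|\sigma_T\|_{L^\infty}\|u\|\,\|v\|$, and you add a trace/density justification via Cessenat's theory, which the paper leaves implicit.
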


\begin{proof}
Fix $\vec{\omega}\in\mathbb{S}^{d-1}$ and abbreviate
$a(u,u):=a(u,u;\vec{\omega})$. By definition,
\begin{align*}
  a(u,u)
  &=
  \int_D \int_I
  \Big[\qp{\vec{\omega}\cdot\nabla_{\vec x} u} u
  - \partial_E\qp{S u} u
  + \sigma_T u^2\Big]\d E \d\vec{x}
  + \tfrac{1}{2}\int_{\Gamma_+(\vec\omega)}\qp{\vec{\omega}\cdot\vec{n}_{\vec x} - S n_E} u^2 \d\gamma.
\end{align*}

We first integrate by parts in $\vec{x}$. Since
$\qp{\vec{\omega}\cdot\nabla_{\vec x} u} u = \tfrac{1}{2}\vec{\omega}\cdot\nabla_{\vec x}\qp{u^2}$,
\begin{equation*}
  \int_D \int_I \qp{\vec{\omega}\cdot\nabla_{\vec x} u} u \d E \d\vec{x}
  =
  \tfrac{1}{2}\int_{\partial D\times I}\qp{\vec{\omega}\cdot\vec{n}_{\vec x}} u^2 \d\gamma.
\end{equation*}
For the energy-advection term, write
\begin{equation*}
  -\partial_E\qp{S u} u
  =
  -\tfrac{1}{2}\partial_E\qp{S u^2} + \tfrac{1}{2}\qp{-S'} u^2,
\end{equation*}
so that integration by parts in $E$ gives
\begin{equation*}
  -\int_D \int_I \partial_E\qp{S u} u \d E \d\vec{x}
  =
  -\tfrac{1}{2}\int_{D\times\partial I} S n_E u^2 \d\gamma
  + \tfrac{1}{2}\int_D \int_I \qp{-S'(E)} u^2 \d E \d\vec{x}.
\end{equation*}
Combining the $\vec{x}$ and $E$ boundary contributions yields
\begin{equation*}
  \tfrac{1}{2}\int_{\partial(D\times I)}
  \qp{\vec{\omega}\cdot\vec{n}_{\vec x} - S n_E} u^2 \d\gamma
  =
  \tfrac{1}{2}\int_{\Gamma_+(\vec\omega)}\qp{\vec{\omega}\cdot\vec{n}_{\vec x} - S n_E} u^2 \d\gamma
  + \tfrac{1}{2}\int_{\Gamma_-(\vec\omega)}\qp{\vec{\omega}\cdot\vec{n}_{\vec x} - S n_E} u^2 \d\gamma.
\end{equation*}
Therefore
\begin{align*}
  a(u,u)
  &=
  \|\sigma_T^{1/2}u(\cdot,\cdot,\vec\omega)\|_{L^2(D\times I)}^2
  + \tfrac{1}{2}\|(-S')^{1/2}u(\cdot,\cdot,\vec\omega)\|_{L^2(D\times I)}^2
  \\
  &\qquad + \int_{\Gamma_+(\vec\omega)}\qp{\vec{\omega}\cdot\vec{n}_{\vec x} - S n_E} u^2 \d\gamma
  + \tfrac{1}{2}\int_{\Gamma_-(\vec\omega)}\qp{\vec{\omega}\cdot\vec{n}_{\vec x} - S n_E} u^2 \d\gamma.
\end{align*}
If $u=0$ on $\Gamma_-$, then for each fixed $\vec\omega$ we have $u=0$
on $\Gamma_-(\vec\omega)$ and the last term vanishes. Hence
\begin{equation*}
  a(u,u)
  \ge
  \tfrac{1}{2}\|\sigma_T^{1/2}u(\cdot,\cdot,\vec\omega)\|_{L^2(D\times I)}^2
  + \tfrac{1}{2}\|(-S')^{1/2}u(\cdot,\cdot,\vec\omega)\|_{L^2(D\times I)}^2
  + \tfrac{1}{2}\int_{\Gamma_+(\vec\omega)}\qp{\vec{\omega}\cdot\vec{n}_{\vec x} - S n_E} u^2 \d\gamma.
\end{equation*}
Integrating this inequality over $\mathbb{S}^{d-1}$ and using $\d
s=\d\gamma \d\vec\omega$ gives
\begin{equation*}
  \int_{\mathbb{S}^{d-1}} a(u,u;\vec{\omega}) \d\vec{\omega}
  \ge
  \tfrac{1}{2}\|\sigma_T^{1/2}u\|_{L^2(\Omega)}^2
  + \tfrac{1}{2}\|(-S')^{1/2}u\|_{L^2(\Omega)}^2
  + \tfrac{1}{2}\int_{\Gamma_+}\qp{\vec{\omega}\cdot\vec{n}_{\vec x} - S n_E} u^2 \d s
  =
  \tfrac{1}{2}\|u\|_{\mathcal{U}}^2,
\end{equation*}
which proves coercivity.

For boundedness of $a$, fix $\vec{\omega}$ and apply Cauchy--Schwarz,
together with
\[
\|\partial_E\qp{S u}\|_{L^2(\Omega)}\le \|S\|_{L^\infty(I)}\|\partial_E u\|_{L^2(\Omega)} + \|S'\|_{L^\infty(I)}\|u\|_{L^2(\Omega)}
\]
we have
\begin{align*}
  \Big|\int_D\int_I \qp{\vec{\omega}\cdot\nabla_{\vec x} u} v\Big|
  &\le
  \|\nabla_{\vec x} u(\cdot,\cdot,\vec\omega)\|_{L^2(D\times I)}\|v(\cdot,\cdot,\vec\omega)\|_{L^2(D\times I)},\\
  \Big|\int_D\int_I \partial_E\qp{S u} v\Big|
  &\le
  \|\partial_E\qp{S u}(\cdot,\cdot,\vec\omega)\|_{L^2(D\times I)}\|v(\cdot,\cdot,\vec\omega)\|_{L^2(D\times I)}\\
  &\le
  \|S\|_{W^{1,\infty}(I)}\qp{\|\partial_E u(\cdot,\cdot,\vec\omega)\|_{L^2(D\times I)}+\|u(\cdot,\cdot,\vec\omega)\|_{L^2(D\times I)}}\|v(\cdot,\cdot,\vec\omega)\|_{L^2(D\times I)},\\
  \Big|\int_D\int_I \sigma_T u v\Big|
  &\le
  \|\sigma_T\|_{L^\infty(I)}\|u(\cdot,\cdot,\vec\omega)\|_{L^2(D\times I)}\|v(\cdot,\cdot,\vec\omega)\|_{L^2(D\times I)},\\
  \Big|\tfrac{1}{2}\int_{\Gamma_+(\vec\omega)}\qp{\vec{\omega}\cdot\vec{n}_{\vec x} - S n_E} u v \d\gamma\Big|
  &\le
  \tfrac{1}{2}
  \Big(\int_{\Gamma_+(\vec\omega)}\qp{\vec{\omega}\cdot\vec{n}_{\vec x} - S n_E} u^2 \d\gamma\Big)^{1/2}
  \Big(\int_{\Gamma_+(\vec\omega)}\qp{\vec{\omega}\cdot\vec{n}_{\vec x} - S n_E} v^2 \d\gamma\Big)^{1/2}.
\end{align*}
Using $\|w\|_{L^2(\Omega)}\le \mu^{-1/2}\|w\|_{\mathcal{U}}$ for $w=v$ and
$\|u\|_{L^2(\Omega)}\le \mu^{-1/2}\|u\|_{\mathcal{U}}\le \mu^{-1/2}\|u\|_{\mathcal{U}^*}$,
we obtain a bound of the form
$|a(u,v;\vec{\omega})|\le C_a\|u\|_{\mathcal{U}^*}\|v\|_{\mathcal{U}}$ with $C_a$ independent of $\vec{\omega}$.
Integrating over $\mathbb{S}^{d-1}$ yields the stated estimate.

For the scattering form, define
\begin{equation*}
  \mathcal{K}[u]\qp{\vec{x},E,\vec{\omega}}
  :=
  \int_{\mathbb{S}^{d-1}}\int_I
  \sigma_S\qp{\vec{\omega},\vec{\omega}',E'\to E} 
  u\qp{\vec{x},E',\vec{\omega}'} \d E' \d\vec{\omega}'.
\end{equation*}
By Fubini and $\|\sigma_S\|_{L^\infty}<\infty$,
\begin{equation*}
  |\mathcal{K}[u]\qp{\vec{x},E,\vec{\omega}}|
  \le
  \|\sigma_S\|_{L^\infty}\int_{\mathbb{S}^{d-1}}\int_I
  |u\qp{\vec{x},E',\vec{\omega}'}| \d E' \d\vec{\omega}'.
\end{equation*}
Applying Cauchy--Schwarz in $(E',\vec{\omega}')$ gives
\[
  |\mathcal{K}[u]\qp{\vec{x},E,\vec{\omega}}|
  \le
  \|\sigma_S\|_{L^\infty} \qp{|I| |\mathbb S^{d-1}|}^{1/2}
  \Big(\int_{\mathbb{S}^{d-1}}\int_I |u\qp{\vec{x},E',\vec{\omega}'}|^2 \d E' \d\vec{\omega}'\Big)^{1/2}.
\]
Squaring, integrating over $(\vec{x},E,\vec{\omega})$ and using Fubini yields
$\|\mathcal{K}[u]\|_{L^2(\Omega)} \le C_s\|u\|_{L^2(\Omega)}$ with $C_s$ depending only on
$\|\sigma_S\|_{L^\infty}$ and the measures of $I$ and $\mathbb{S}^{d-1}$. Hence
\begin{equation*}
  \Big|\int_{\mathbb{S}^{d-1}} s(u,v;\vec{\omega}) \d\vec{\omega}\Big|
  =
  \Big|\int_\Omega v \mathcal{K}[u]\Big|
  \le
  \|v\|_{L^2(\Omega)}\|\mathcal{K}[u]\|_{L^2(\Omega)}
  \le
  C_s\|u\|_{L^2(\Omega)}\|v\|_{L^2(\Omega)}.
\end{equation*}
The final inequality in the theorem follows from $\|w\|_{L^2(\Omega)}\le \mu^{-1/2}\|w\|_{\mathcal{U}}$ for $w=u,v$.
\end{proof}

\subsection{Source iteration at the continuum level}
\label{sec:SI-continuum}

The weak formulation \eqref{eq:weak} couples all directions through the
gain form $s(\cdot,\cdot;\vec{\omega})$. To decouple angles and obtain
an efficient solver, we employ a (Picard) source iteration in which the
gain is lagged. Given an initial guess $u^{(0)}\in\mathcal{U}$, define
$\{u^{(n)}\}_{n\ge 1}$ by the following directional problems: for each
$\vec{\omega}\in\mathbb{S}^{d-1}$, find
$u^{(n)}(\cdot,\cdot,\vec{\omega})\in\mathcal{U}$ such that
\begin{equation}
  \label{eq:SI-weak}
  a\qp{u^{(n)},v;\vec{\omega}}
  =
  s\qp{u^{(n-1)},v;\vec{\omega}}
  +
  l\qp{v;\vec{\omega}}
  \qquad \text{for all } v\in\mathcal{U},
\end{equation}
where $a$, $s$ and $l$ are the forms from Section~\ref{sec:variational}.
In particular, the inflow boundary datum $g$ is enforced in the linear
functional $l(\cdot;\vec{\omega})$ and is not updated with $n$.

\begin{proposition}[Convergence of source iteration]
\label{prop:SI-continuum}
Assume the hypotheses of Theorem~\ref{the:coercivity} and that
the gain operator induces a bounded bilinear form on $\mathcal{U}$ in
the sense that there exists $\eta\ge 0$ such that
\begin{equation}
  \label{eq:K-U-bound}
  \Big|\int_{\mathbb{S}^{d-1}} s\qp{w,v;\vec{\omega}} \d\vec{\omega}\Big|
  \le
  \eta  \|w\|_{\mathcal{U}} \|v\|_{\mathcal{U}}
  \qquad \text{for all } w,v\in\mathcal{U}.
\end{equation}
If $2\eta<1$, then the Picard map induced by \eqref{eq:SI-weak} is a
strict contraction on $\mathcal{U}$ in the $\|\cdot\|_{\mathcal U}$ norm.
Consequently, for any initial guess $u^{(0)}\in\mathcal U$ the iterates
$\{u^{(n)}\}_{n\ge 0}$ defined by \eqref{eq:SI-weak} converge strongly in
$\mathcal{U}$ to the unique solution $u$ of the variational problem
\eqref{eq:weak}.

In particular, by Theorem~\ref{the:coercivity} one may take
$\eta = C_s \mu^{-1}$, where $C_s$ is the scattering boundedness
constant.
\end{proposition}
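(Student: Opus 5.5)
The plan is to study the error $e^{(n)} := u^{(n)} - u$, where $u$ solves \eqref{eq:weak}, or more directly the difference of two Picard images. Let $T$ denote the Picard map: $T w$ is the function whose $\vec\omega$-slice solves $a(Tw,v;\vec\omega) = s(w,v;\vec\omega) + l(v;\vec\omega)$ for all $v$. For $w_1,w_2\in\mathcal U$ set $\delta := Tw_1 - Tw_2$. Subtracting the two directional problems cancels $l$, and hence the inflow datum $g$. This gives, for every $\vec\omega$ and every $v$,
\begin{equation*}
  a(\delta,v;\vec\omega) = s(w_1-w_2,v;\vec\omega).
\end{equation*}
Integrating over $\mathbb S^{d-1}$ recovers the angle-integrated forms that appear in Theorem~\ref{the:coercivity} and in \eqref{eq:K-U-bound}.

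Testing with $v=\delta$ is the key step. Because $g$ cancels, $\delta$ has homogeneous inflow trace on $\Gamma_-$, so the coercivity part of Theorem~\ref{the:coercivity} gives
\begin{equation*}
  \tfrac12\|\delta\|_{\mathcal U}^2 \le \int_{\mathbb S^{d-1}} a(\delta,\delta;\vec\omega)\d\vec\omega = \int_{\mathbb S^{d-1}} s(w_1-w_2,\delta;\vec\omega)\d\vec\omega \le \eta\|w_1-w_2\|_{\mathcal U}\|\delta\|_{\mathcal U}.
\end{equation*}
Dividing by $\|\delta\|_{\mathcal U}$ (the case $\delta=0$ is trivial) yields $\|Tw_1-Tw_2\|_{\mathcal U}\le 2\eta\|w_1-w_2\|_{\mathcal U}$, a strict contraction when $2\eta<1$. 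The identification $\eta = C_s\mu^{-1}$ is immediate from the last inequality of Theorem~\ref{the:coercivity}.

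To conclude convergence, I would invoke Banach's fixed point theorem. Its hypothesis is that $T$ maps a complete space into itself. Since $\mu>0$, the norm $\|\cdot\|_{\mathcal U}$ dominates $\mu^{1/2}\|\cdot\|_{L^2}$. I would work in the completion of $\mathcal U$ (or of the affine set of functions with trace $g$) under this norm, so that Cauchy sequences of iterates converge there. A fixed point of $T$ satisfies \eqref{eq:SI-weak} with $u^{(n)}=u^{(n-1)}=u$, and integrating over $\vec\omega$ gives exactly \eqref{eq:weak}. Uniqueness follows from the same estimate: if $u,\tilde u$ both solve \eqref{eq:weak}, their difference $\delta$ satisfies $\tfrac12\|\delta\|^2_{\mathcal U}\le\eta\|\delta\|^2_{\mathcal U}$, which forces $\delta=0$. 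Alternatively, setting $w_1=u^{(n-1)}$ and $w_2=u$ gives $\|u^{(n)}-u\|_{\mathcal U}\le(2\eta)^n\|u^{(0)}-u\|_{\mathcal U}$ directly, once existence of $u$ is known.

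The main obstacle is well-definedness rather than the estimate. Each directional problem \eqref{eq:SI-weak} must have a solution in $\mathcal U$, and $\mathcal U$ must be complete in $\|\cdot\|_{\mathcal U}$; this norm is only a weighted $L^2$-plus-outflow-trace norm, not the full graph norm. I would handle solvability by the method of characteristics. The forward-directional sweep along $(\vec x,E)$-characteristics, with $S>0$ giving a one-way energy flow, produces an explicit solution with the prescribed inflow data, and its uniqueness follows from the coercivity identity. I would also flag that the contraction and the trace cancellation really only use homogeneous inflow for differences, which is why this cancellation is the structural heart of the argument.
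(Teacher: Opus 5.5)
Your proposal is correct and follows the paper's proof essentially step for step. You subtract the two Picard problems so that the inflow datum cancels, test with $\delta$, combine the coercivity of Theorem~\ref{the:coercivity} with \eqref{eq:K-U-bound} to get the contraction factor $2\eta$, and conclude with Banach's theorem, uniqueness from the same estimate, and $\eta=C_s\mu^{-1}$ from the scattering bound; your remarks on completeness of $\mathcal U$ in $\|\cdot\|_{\mathcal U}$ and on solvability of the directional problems add care on points the paper passes over, since it applies Banach's theorem directly on $(\mathcal U,\|\cdot\|_{\mathcal U})$ and cites the well-posedness associated with Theorem~\ref{the:coercivity}.
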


\begin{proof}
Define the integrated bilinear forms
\[
  A(u,v) := \int_{\mathbb S^{d-1}} a(u,v;\vec\omega) \d\vec\omega,
  \qquad
  S(u,v) := \int_{\mathbb S^{d-1}} s(u,v;\vec\omega) \d\vec\omega,
\]
and likewise $L(v):=\int_{\mathbb S^{d-1}}l(v;\vec\omega) \d\vec\omega$.
Then the weak problem \eqref{eq:weak} reads $A(u,v)=S(u,v)+L(v)$ for all
$v\in\mathcal U$, and the source iteration \eqref{eq:SI-weak} reads
\begin{equation*}
  A\qp{u^{(n)},v} = S\qp{u^{(n-1)},v}+L(v)\qquad\text{for all }v\in\mathcal U.
\end{equation*}

Let $T:\mathcal U\to\mathcal U$ denote the Picard map defined by
$u^{(n)}=T(u^{(n-1)})$, that is, $T(w)$ is the unique element of
$\mathcal U$ satisfying
\begin{equation}
  \label{eq:Tdef}
  A\qp{T(w),v} = S\qp{w,v}+L(v)\qquad\text{for all }v\in\mathcal U.
\end{equation}
(Existence and uniqueness of $T(w)$ follow from the well-posedness
associated with Theorem~\ref{the:coercivity}; in particular, the boundary
datum is fixed in $L$ and is therefore the same for all arguments $w$.)

To prove contraction, let $w_1,w_2\in\mathcal U$ and set
$\delta := T(w_1)-T(w_2)$. Subtracting \eqref{eq:Tdef} for $w_1$ and $w_2$
gives
\begin{equation}
  \label{eq:Tdiff}
  A(\delta,v) = S\qp{w_1-w_2,v}\qquad\text{for all }v\in\mathcal U.
\end{equation}
Since the inflow datum is fixed in $L$, it cancels in the difference, and
$\delta$ has homogeneous inflow trace on $\Gamma_-$ (equivalently,
$\delta$ belongs to the homogeneous-inflow subspace used in
Theorem~\ref{the:coercivity}). Choosing $v=\delta$ in \eqref{eq:Tdiff} and
applying the coercivity estimate from Theorem~\ref{the:coercivity} yields
\begin{equation}
  \label{eq:Tcoerc}
  \tfrac12 \|\delta\|_{\mathcal U}^2 \le \Big|S\qp{w_1-w_2,\delta}\Big|.
\end{equation}
Using \eqref{eq:K-U-bound} gives
\[
  \tfrac12 \|\delta\|_{\mathcal U}^2
  \le
  \eta \|w_1-w_2\|_{\mathcal U} \|\delta\|_{\mathcal U},
  \qquad\text{hence}\qquad
  \|\delta\|_{\mathcal U}\le 2\eta \|w_1-w_2\|_{\mathcal U}.
\]
Therefore $\|T(w_1)-T(w_2)\|_{\mathcal U}\le 2\eta \|w_1-w_2\|_{\mathcal U}$.
If $2\eta<1$, $T$ is a strict contraction on $\mathcal U$. Banach's fixed
point theorem implies that $u^{(n)}\to u$ strongly in $\mathcal U$, where
$u$ is the unique fixed point of $T$, and inserting $u=T(u)$ in
\eqref{eq:Tdef} shows that $u$ satisfies \eqref{eq:weak}. Uniqueness of
the solution of \eqref{eq:weak} follows by applying the same estimate to
the difference of two solutions.

Finally, the choice $\eta=C_s\mu^{-1}$ follows immediately from the
scattering boundedness estimate in Theorem~\ref{the:coercivity}.
\end{proof}

\section{Angular approximation and error control}
\label{sec:angular}

In this section we focus on the angular discretisation of elastic
Coulomb scattering. For simplicity we neglect nonelastic interactions
and retain only the energy-conserving elastic component of the gain.
Accordingly we work with an elastic kernel
$\sigma_{\text{el}}(\vec{\omega},\vec{\omega}',E)$ and the associated
gain
\[
  \mathcal{K}_{\text{el}}[\psi](\vec{x},E,\vec{\omega})
  :=
  \int_{\mathbb{S}^{d-1}}
  \sigma_{\text{el}}(\vec{\omega},\vec{\omega}',E) 
  \psi(\vec{x},E,\vec{\omega}') \d\vec{\omega}'.
\]
The stopping power $S(E)$ is the same as in Section~\ref{sec:models}.

To reduce computational cost, we restrict the angular domain to a
forward cone centred on the primary beam direction. Let
$\vec{\omega}_\star\in\mathbb{S}^{d-1}$ be a reference direction and,
for a half-angle $\theta_c>0$, define
\begin{equation}
  \label{eq:cone}
  \mathcal{C}(\vec{\omega}_\star,\theta_c)
  :=
  \Big\{ \vec{\omega}'\in\mathbb{S}^{d-1} :
  \arccos\qp{\vec{\omega}_\star \cdot \vec{\omega}'}\le \theta_c \Big\},
\end{equation}
illustrated in Figure \ref{fig:cone}. In forward-peaked regimes (high
energies, weak multiple scattering) one may take $\theta_c$ small,
whereas stronger scattering requires a larger cone.

We therefore introduce the cone-truncated gain operator
\[
  \mathcal{K}_{\text{el}}^{c}[\psi](\vec{x},E,\vec{\omega})
  :=
  \int_{\mathcal{C}(\vec{\omega}_\star,\theta_c)}
  \sigma_{\text{el}}(\vec{\omega},\vec{\omega}',E) 
  \psi(\vec{x},E,\vec{\omega}') \d\vec{\omega}'.
\]
The difference $\mathcal{K}_{\text{el}}[\psi]-\mathcal{K}_{\text{el}}^{c}[\psi]$
is the cone-truncation remainder, and it will be treated as a distinct
error contribution below.

\begin{figure}[h!]
  \centering
  \begin{tikzpicture}[scale=1.5]

\draw[thick, dashed] (0,0) circle (2);

\draw[->, very thick, orange] (0,0) -- (1.2,1.6) node[above right] {$\vec{\w}$};

\draw[thick, dashed, purple] (0,0) -- (1.5,1.2);
\draw[thick, dashed, purple] (0,0) -- (0.75,1.7);
\draw[thick, dashed, purple] (1.5,1.2) arc[start angle=40, end angle=65, radius=2.12];

\draw[<->, thick] (0.7,0.6) arc[start angle=40, end angle=55, radius=0.8] node[midway, above] {$\theta_c$};
\node[purple] at (1.5,.7) {$\mathcal{C}(\vec{\w}, \theta_c)$};

\draw[->] (-2.5,0) -- (2.5,0) node[right] {$x$};
\draw[->] (0,-2.5) -- (0,2.5) node[above] {$z$};

\end{tikzpicture}
  \caption{Angular computational domain restricted to a cone centred on
    $\vec{\omega}_\star$.}
  \label{fig:cone}
\end{figure}

\subsection{Angular discretisation}

Let $\{\vec{\omega}_j\}_{j=1}^Q\subset\mathcal{C}(\vec{\omega}_\star,\theta_c)$
be quadrature nodes with weights $\{\mu_j\}_{j=1}^Q>0$. For sufficiently
regular $f$,
\begin{equation}
  \label{eq:cone-quadrature}
  \int_{\mathcal{C}(\vec{\omega}_\star,\theta_c)} f(\vec{\omega}') \d\vec{\omega}'
  \approx
  \sum_{j=1}^Q \mu_j f(\vec{\omega}_j).
\end{equation}
Applying \eqref{eq:cone-quadrature} to the cone-truncated elastic gain
yields
\begin{equation}
  \label{eq:elastic-cone}
  \int_{\mathcal{C}(\vec{\omega}_\star,\theta_c)}
  \sigma_{\text{el}}(\vec{\omega},\vec{\omega}',E) 
  \psi(\vec{x},E,\vec{\omega}') \d\vec{\omega}'
  \approx
  \sum_{j=1}^Q \mu_j 
  \sigma_{\text{el}}(\vec{\omega},\vec{\omega}_j,E) 
  \psi(\vec{x},E,\vec{\omega}_j).
\end{equation}

Introducing the directional unknowns
$\psi_i(\vec{x},E):=\psi(\vec{x},E,\vec{\omega}_i)$ for $i=1,\dots,Q$,
the corresponding discrete-ordinates system for the cone-truncated model reads
\begin{equation}
  \label{eq:DOM}
  \vec{\omega}_i\cdot\nabla_{\vec{x}}\psi_i(\vec{x},E)
  -
  \partial_E\qp{S(E)\psi_i(\vec{x},E)}
  +
  \sigma_T(E)\psi_i(\vec{x},E)
  =
  \sum_{j=1}^Q \mu_j 
  \sigma_{\text{el}}(\vec{\omega}_i,\vec{\omega}_j,E) 
  \psi_j(\vec{x},E),
  \qquad i=1,\dots,Q.
\end{equation}
This is a coupled family of energy-dependent transport equations, one
per direction, capturing forward-peaked angular exchange within
$\mathcal{C}(\vec{\omega}_\star,\theta_c)$. For each ordinate $i$, the
inflow boundary condition is imposed on $\Gamma_-(\vec\omega_i)$ in the
sense of Section~\ref{sec:models}, namely on the spatial inflow set
$\{\vec\omega_i\cdot \vec n_{\vec x}<0\}$ together with the top-energy
boundary $E=E_{\max}$, with no condition on the corresponding outflow.

\begin{remark}[Continuum conservation]
Assume the elastic kernel satisfies the mass-conserving normalisation
\[
  \int_{\mathbb{S}^{d-1}} \sigma_{\text{el}}(\vec{\omega},\vec{\omega}',E) \d\vec{\omega}
  =
  \Sigma_{\text{el}}(E)
  \qquad \text{for a.e. } E\in I,
\]
where $\Sigma_{\text{el}}(E)$ is independent of $\vec{\omega}'$. Then the
elastic gain satisfies the identity
\[
  \int_{\mathbb{S}^{d-1}}
  \Big(\int_{\mathbb{S}^{d-1}}
  \sigma_{\text{el}}(\vec{\omega},\vec{\omega}',E) 
  \psi(\vec{x},E,\vec{\omega}') \d\vec{\omega}'\Big)\d\vec{\omega}
  =
  \Sigma_{\text{el}}(E)
  \int_{\mathbb{S}^{d-1}}
  \psi(\vec{x},E,\vec{\omega}) \d\vec{\omega}.
\]
Consequently, in the purely elastic setting where the loss term contains
the elastic removal $\Sigma_{\text{el}}(E)\psi$ (for example
$\sigma_T(E)=\Sigma_{\text{el}}(E)$ when no other interactions are
present), the net elastic scattering contribution conserves particle
number at each energy after integrating over $\mathbb{S}^{d-1}$.

If the angular domain is truncated to a cone
$\mathcal{C}(\vec{\omega}_\star,\theta_c)$, then the corresponding
cone-truncated gain $\mathcal{K}_{\mathrm{el}}^{c}$ fails to satisfy the
full-sphere identity only through scattering events that transfer mass to
$\mathbb{S}^{d-1}\setminus\mathcal{C}(\vec{\omega}_\star,\theta_c)$.
Quantitative bounds are given later via cone-truncation error estimates.
\end{remark}

\begin{proposition}[Discrete conservation under exact angular quadrature]
\label{prop:conservation}
Assume the elastic kernel is rotationally invariant at fixed energy,
$\sigma_{\mathrm{el}}(\vec{\omega},\vec{\omega}',E)
=\kappa_E(\vec{\omega}\cdot\vec{\omega}')$ with
$\kappa_E:[-1,1]\to\mathbb{R}_+$. Fix an integer $L\ge 0$ and assume that,
for a.e.\ $E\in I$, $\kappa_E$ admits a truncated Legendre expansion of
degree at most $L$, that is,
\[
  \kappa_E(\mu) = \sum_{\ell=0}^{L} a_\ell(E) P_\ell(\mu)
  \qquad\text{for }\mu\in[-1,1],
\]
with coefficients $a_\ell(E)\ge 0$. Let
$\{(\vec{\omega}_i,\varpi_i)\}_{i=1}^Q\subset
\mathbb{S}^{d-1}\times(0,\infty)$ be a full-sphere quadrature rule
that is exact for spherical harmonics (up to degree $L$), equivalently
exact for all restrictions to $\mathbb S^{d-1}$ of polynomials in
$\vec\omega$ of total degree at most $L$. In particular, for each
$j=1,\dots,Q$ and a.e.\ $E\in I$, this implies exactness for the
corresponding zonal functions $\vec{\omega}\mapsto
P_\ell(\vec{\omega}\cdot\vec{\omega}_j)$, $\ell=0,\dots,L$, and hence
for $\vec{\omega}\mapsto
\kappa_E(\vec{\omega}\cdot\vec{\omega}_j)$. Define
\[
  \Sigma_{\mathrm{el}}(E)
  :=
  \int_{\mathbb{S}^{d-1}}
  \sigma_{\mathrm{el}}(\vec{\omega},\vec{\omega}',E) \d\vec{\omega},
\]
which is independent of $\vec{\omega}'$ by rotational invariance. Then
for any values $\{\psi_j(\vec{x},E)\}_{j=1}^Q$ one has the discrete balance
\[
  \sum_{i=1}^Q \varpi_i
  \Big(
    \sum_{j=1}^Q \mu_j 
    \sigma_{\mathrm{el}}(\vec{\omega}_i,\vec{\omega}_j,E) 
    \psi_j(\vec{x},E)
    -
    \Sigma_{\mathrm{el}}(E) \psi_i(\vec{x},E)
  \Big)
  = 0,
\]
where $\psi_i(\vec{x},E):=\psi(\vec{x},E,\vec{\omega}_i)$. Equivalently,
\[
  \sum_{i=1}^Q \varpi_i \sum_{j=1}^Q \mu_j 
  \sigma_{\mathrm{el}}(\vec{\omega}_i,\vec{\omega}_j,E) 
  \psi_j(\vec{x},E)
  =
  \Sigma_{\mathrm{el}}(E)\sum_{i=1}^Q \varpi_i \psi_i(\vec{x},E).
\]
\end{proposition}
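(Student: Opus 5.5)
The plan is to exchange the order of the two finite sums and then use quadrature exactness on the inner sum over $i$. By rotational invariance, for each fixed node $\vec\omega_j$ and a.e.\ $E$ the inner sum is
\[
  \sum_{i=1}^Q \varpi_i\,\sigma_{\mathrm{el}}(\vec\omega_i,\vec\omega_j,E)
  =\sum_{i=1}^Q \varpi_i\,\kappa_E(\vec\omega_i\cdot\vec\omega_j).
\]
This is the quadrature approximation of $\int_{\mathbb S^{d-1}}\kappa_E(\vec\omega\cdot\vec\omega_j)\,\d\vec\omega=\Sigma_{\mathrm{el}}(E)$. The left-hand side of the equivalent form of the balance then becomes
\[
  \sum_{j=1}^Q \mu_j\,\psi_j(\vec x,E)\Big(\sum_{i=1}^Q\varpi_i\,\kappa_E(\vec\omega_i\cdot\vec\omega_j)\Big),
\]
and it remains to show that the bracket equals $\Sigma_{\mathrm{el}}(E)$ exactly.

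First I verify the exactness claim recorded in the statement. For fixed $\vec\omega_j$, the map $\vec\omega\mapsto P_\ell(\vec\omega\cdot\vec\omega_j)$ is the restriction to $\mathbb S^{d-1}$ of a polynomial in $\vec\omega$ of total degree $\ell\le L$. Hence the rule integrates it exactly. By linearity in $\ell$, and since the coefficients $a_\ell(E)$ do not depend on $\vec\omega$, the rule also integrates $\kappa_E(\cdot\,\cdot\vec\omega_j)$ exactly. I would also record that $\Sigma_{\mathrm{el}}(E)$ is independent of $\vec\omega'$: the change of variables by a rotation taking $\vec\omega'$ to a fixed pole leaves surface measure invariant. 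Explicitly, orthogonality of $P_\ell$ to $P_0$ gives $\Sigma_{\mathrm{el}}(E)=a_0(E)|\mathbb S^{d-1}|$. Together these give $\sum_i\varpi_i\kappa_E(\vec\omega_i\cdot\vec\omega_j)=\Sigma_{\mathrm{el}}(E)$ for every $j$ and a.e.\ $E$.

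Substituting back yields $\Sigma_{\mathrm{el}}(E)\sum_j\mu_j\psi_j$. The final step needs $\sum_j\mu_j\psi_j=\sum_i\varpi_i\psi_i$ for arbitrary values $\psi_j$, and this is the main point of care rather than a genuine obstacle. It holds if and only if $\mu_j=\varpi_j$ for all $j$. The statement therefore has to be read with the discrete-ordinates gain in \eqref{eq:DOM} built from the same full-sphere rule, taking $\mathcal C=\mathbb S^{d-1}$ so that the nodes and weights of \eqref{eq:cone-quadrature} coincide with $(\vec\omega_i,\varpi_i)$. I will state this identification explicitly at the start of the proof. If the weights were allowed to differ, the argument above still proves the weaker balance $\sum_i\varpi_i(\text{gain})_i=\Sigma_{\mathrm{el}}\sum_j\mu_j\psi_j$, and I would remark on this. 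With $\mu=\varpi$, subtracting $\Sigma_{\mathrm{el}}(E)\sum_i\varpi_i\psi_i$ gives the first displayed identity. Everything holds pointwise in $(\vec x,E)$, so no regularity of $\psi_j$ is required.
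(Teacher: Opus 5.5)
Your argument is the paper's: interchange the two sums, apply exactness of the rule to the zonal function $\vec\omega\mapsto\kappa_E(\vec\omega\cdot\vec\omega_j)$ for each fixed $j$, and factor out $\Sigma_{\mathrm{el}}(E)$. Your remark that the final step needs $\mu_j=\varpi_j$ is correct and worth keeping, since the paper's proof also stops at $\Sigma_{\mathrm{el}}(E)\sum_j\mu_j\psi_j$ and calls this ``equivalent to the stated balance'' without comment.

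One correction to an aside you do not need. The formula $\Sigma_{\mathrm{el}}(E)=a_0(E)|\mathbb S^{d-1}|$ holds only for $d=3$. For $d=2$ the variable $t=\vec\omega\cdot\vec\omega_j$ has induced measure $2(1-t^2)^{-1/2}\,\mathrm{d}t$, so Legendre orthogonality fails; for example $\int_{\mathbb S^1}P_2(\vec\omega\cdot\vec\omega_j)\,\mathrm{d}\vec\omega=\pi/2\neq 0$. Your rotation-invariance argument already shows that $\Sigma_{\mathrm{el}}(E)$ is independent of $\vec\omega'$, and that is all the proof uses.
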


\begin{proof}
Using
$\sigma_{\mathrm{el}}(\vec{\omega}_i,\vec{\omega}_j,E)=\kappa_E(\vec{\omega}_i\cdot\vec{\omega}_j)$,
we compute
\[
  \sum_{i=1}^Q \varpi_i \sum_{j=1}^Q \mu_j 
  \sigma_{\mathrm{el}}(\vec{\omega}_i,\vec{\omega}_j,E) \psi_j
  =
  \sum_{j=1}^Q \mu_j \psi_j
  \Big(\sum_{i=1}^Q \varpi_i \kappa_E(\vec{\omega}_i\cdot\vec{\omega}_j)\Big).
\]
By quadrature exactness, for each fixed $j$ we have
\[
  \sum_{i=1}^Q \varpi_i \kappa_E(\vec{\omega}_i\cdot\vec{\omega}_j)
  =
  \int_{\mathbb{S}^{d-1}} \kappa_E(\vec{\omega}\cdot\vec{\omega}_j) \d\vec{\omega}
  =
  \int_{\mathbb{S}^{d-1}} \sigma_{\mathrm{el}}(\vec{\omega},\vec{\omega}_j,E) \d\vec{\omega}
  =
  \Sigma_{\mathrm{el}}(E),
\]
and the right-hand side is independent of $j$. Substituting gives
\[
  \sum_{i=1}^Q \varpi_i \sum_{j=1}^Q \mu_j 
  \sigma_{\mathrm{el}}(\vec{\omega}_i,\vec{\omega}_j,E) \psi_j
  =
  \Sigma_{\mathrm{el}}(E)\sum_{j=1}^Q \mu_j \psi_j,
\]
which is equivalent to the stated balance.
\end{proof}

\begin{remark}
Exact conservation in Proposition~\ref{prop:conservation} relies on a
full-sphere quadrature rule that is exact for the relevant zonal
functions. In the cone-restricted setting
$\mathcal{C}(\vec{\omega}_\star,\theta_c)$ used for computation, the same
calculation applies to the cone-truncated gain, but conservation holds
only up to a remainder representing scattering events that transfer mass
to $\mathbb{S}^{d-1}\setminus\mathcal{C}(\vec{\omega}_\star,\theta_c)$.
Quantitative bounds are given later via cone-truncation estimates.
\end{remark}

Let $U:=L^2(D\times I)$ and write $\vec{u}=(u_1,\dots,u_Q)\in U^Q$ for
the directional components associated with nodes
$\{\vec{\omega}_i\}_{i=1}^Q$. We work with the per-ordinate graph
spaces
\[
  \mathcal{U}_i
  :=
  \Big\{ w\in L^2(D\times I) : 
  \vec{\omega}_i\cdot\nabla_{\vec x} w \in L^2(D\times I),
  \ \partial_E\qp{S(E)w}\in L^2(D\times I)\Big\},
  \qquad i=1,\dots,Q,
\]
and the product space $\mathcal{U}^Q:=\prod_{i=1}^Q \mathcal{U}_i$.
We write $\d\gamma$ for the surface measure on $\partial(D\times I)$.

The discrete elastic scattering form for the $i$-th direction is
\begin{equation}
  \label{eq:sQ-def}
  s^Q(\vec{u},v_i;\vec{\omega}_i)
  :=
  \int_D \int_I
  v_i(\vec{x},E)
  \sum_{j=1}^Q \mu_j 
  \sigma_{\mathrm{el}}(\vec{\omega}_i,\vec{\omega}_j,E) 
  u_j(\vec{x},E)
   \d E \d\vec{x}.
\end{equation}
Consistently with Section~\ref{sec:variational}, the directional
bilinear form retains the transport and boundary terms,
\begin{multline}
  \label{eq:aQ-def}
  a(u_i,v_i;\vec{\omega}_i)
  :=
  \int_D \int_I
  \Big[
    \qp{\vec{\omega}_i\cdot\nabla_{\vec{x}} u_i} v_i
    -
    \partial_E\qp{S(E)u_i} v_i
    +
    \sigma_T(E) u_i v_i
  \Big] \d E \d\vec{x}
  \\
  + \frac{1}{2}\int_{\Gamma_+(\vec{\omega}_i)}
  \qp{\vec{\omega}_i\cdot\vec{n}_{\vec x} - S(E)n_E} u_i v_i \d\gamma,
\end{multline}
and the inflow linear form for the $i$-th direction is
\begin{equation}
  \label{eq:lQ-def}
  l(v_i;\vec{\omega}_i)
  :=
  -\frac{1}{2}\int_{\Gamma_-(\vec{\omega}_i)}
  \qp{\vec{\omega}_i\cdot\vec{n}_{\vec x} - S(E)n_E} 
  g(\vec{x},E,\vec{\omega}_i) v_i(\vec{x},E) \d\gamma.
\end{equation}
Neglecting nonelastic interactions, the discrete weak problem in angle
is: find $\vec{u}\in\mathcal{U}^Q$ such that, for each $i=1,\dots,Q$,
\begin{equation}
  \label{eq:discrete_in_angle}
  a(u_i,v_i;\vec{\omega}_i)
  =
  s^Q(\vec{u},v_i;\vec{\omega}_i)
  +
  l(v_i;\vec{\omega}_i)
  \qquad \text{for all } v_i\in\mathcal{U}_i.
\end{equation}

\subsection{Source iteration}
\label{subsec:sourceiteration}

The coupled directional system \eqref{eq:DOM} contains angular
scattering terms that redistribute particles across directions. We
utilise a discrete source-iteration scheme, which decouples the
directional equations at each step and permits independent, parallel
directional solves.

Given $\vec{u}^{(0)}=(u^{(0)}_1,\dots,u^{(0)}_Q)\in\mathcal{U}^Q$, define
$\vec{u}^{(n)}=(u^{(n)}_1,\dots,u^{(n)}_Q)$ by, for each $i=1,\dots,Q$,
\begin{equation}
  \label{eq:SI-strong-discrete}
  \vec{\omega}_i\cdot\nabla_{\vec x} u_i^{(n)}(\vec{x},E)
  -
  \partial_E\qp{S(E)u_i^{(n)}(\vec{x},E)}
  +
  \sigma_T(E)u_i^{(n)}(\vec{x},E)
  =
  \sum_{j=1}^Q \mu_j 
  \sigma_{\mathrm{el}}(\vec{\omega}_i,\vec{\omega}_j,E) 
  u_j^{(n-1)}(\vec{x},E),
\end{equation}
with inflow data $u_i^{(n)}=g(\cdot,\cdot,\vec{\omega}_i)$ on
$\Gamma_-(\vec{\omega}_i)$.

The corresponding discrete weak formulation reads: given
$\vec{u}^{(n-1)}\in\mathcal{U}^Q$, find
$\vec{u}^{(n)}\in\mathcal{U}^Q$ such that, for each $i=1,\dots,Q$,
\begin{equation}
  \label{eq:SI-weak-discrete}
  a\qp{u_i^{(n)},v_i;\vec{\omega}_i}
  =
  s^Q\qp{\vec{u}^{(n-1)},v_i;\vec{\omega}_i}
  +
  l\qp{v_i;\vec{\omega}_i}
  \qquad \text{for all } v_i\in\mathcal{U}_i.
\end{equation}

\begin{proposition}[Convergence of discrete-ordinates source iteration]
\label{prop:SI-discrete}
Assume the hypotheses of Theorem~\ref{the:coercivity} and neglect
nonelastic interactions, so that scattering is given by the elastic
kernel $\sigma_{\mathrm{el}}$. Let $\{\vec\omega_i,\varpi_i\}_{i=1}^Q$ be a
(full-sphere or cone-restricted) quadrature rule and consider the
discrete-ordinates weak iteration \eqref{eq:SI-weak-discrete} on the
product graph space $\mathcal U^Q=\prod_{i=1}^Q\mathcal U_i$.

Assume that the discrete scattering form satisfies the boundedness
estimate: there exists $\eta_Q\ge 0$ such that
\begin{equation}
  \label{eq:KQ-U-bound}
  \Big|\sum_{i=1}^Q \varpi_i  s^Q\qp{\vec w,v_i;\vec\omega_i}\Big|
  \le
  \eta_Q \|\vec w\|_{\mathcal U^Q} \|\vec v\|_{\mathcal U^Q}
  \qquad\text{for all }\vec w,\vec v\in\mathcal U^Q,
\end{equation}
where $\|\vec w\|_{\mathcal U^Q}^2:=\sum_{i=1}^Q \varpi_i \|w_i\|_{\mathcal U_i}^2$
and $\|\cdot\|_{\mathcal U_i}$ is the coercivity norm induced by
$a(\cdot,\cdot;\vec\omega_i)$.

If $2\eta_Q<1$, then the discrete source iteration \eqref{eq:SI-weak-discrete}
is a strict contraction on $\mathcal U^Q$ in the norm $\|\cdot\|_{\mathcal U^Q}$.
Consequently, for any initial guess $\vec u^{(0)}\in\mathcal U^Q$ the iterates
$\vec u^{(n)}$ converge strongly in $\mathcal U^Q$ to the unique solution
$\vec u\in\mathcal U^Q$ of the discrete weak problem \eqref{eq:discrete_in_angle}.
\end{proposition}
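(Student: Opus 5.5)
The argument will follow the proof of Proposition~\ref{prop:SI-continuum}, with the angular integral replaced by the weighted quadrature sum. I would define the aggregated forms
\[
  A^Q(\vec u,\vec v):=\sum_{i=1}^Q \varpi_i\, a(u_i,v_i;\vec\omega_i),\qquad
  S^Q(\vec u,\vec v):=\sum_{i=1}^Q \varpi_i\, s^Q(\vec u,v_i;\vec\omega_i),\qquad
  L^Q(\vec v):=\sum_{i=1}^Q \varpi_i\, l(v_i;\vec\omega_i).
\]
Since the test functions $v_i$ in \eqref{eq:SI-weak-discrete} are independent across $i$ and $\varpi_i>0$, the family of directional problems is equivalent to $A^Q(\vec u^{(n)},\vec v)=S^Q(\vec u^{(n-1)},\vec v)+L^Q(\vec v)$ for all $\vec v\in\mathcal U^Q$. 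The same equivalence holds for \eqref{eq:discrete_in_angle} with $\vec u^{(n)}=\vec u^{(n-1)}=\vec u$.

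Next I would define the Picard map $T^Q:\mathcal U^Q\to\mathcal U^Q$ by letting $T^Q(\vec w)$ be the solution of this problem with lagged argument $\vec w$. Because the right-hand side depends only on the given $\vec w$, this decouples into $Q$ independent single-direction transport problems. Their solvability follows from the well-posedness underlying Theorem~\ref{the:coercivity} at fixed $\vec\omega=\vec\omega_i$ (equivalently, from the explicit characteristic solution). The inflow datum $g(\cdot,\cdot,\vec\omega_i)$ enters only through $l$, so it is the same for every $\vec w$.

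For the contraction step, take $\vec w_1,\vec w_2$ and set $\vec\delta=T^Q(\vec w_1)-T^Q(\vec w_2)$. Subtracting the two problems gives $A^Q(\vec\delta,\vec v)=S^Q(\vec w_1-\vec w_2,\vec v)$, and each $\delta_i$ has zero trace on $\Gamma_-(\vec\omega_i)$. Now apply the per-direction identity from the proof of Theorem~\ref{the:coercivity} before the angular integration. With vanishing inflow trace it gives
\[
  a(\delta_i,\delta_i;\vec\omega_i)\ge\tfrac12\|\delta_i\|_{\mathcal U_i}^2,
\]
where $\|\cdot\|_{\mathcal U_i}$ collects the $\sigma_T$, $-S'$ and $\Gamma_+(\vec\omega_i)$ contributions. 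Multiplying by $\varpi_i$ and summing yields $A^Q(\vec\delta,\vec\delta)\ge\tfrac12\|\vec\delta\|_{\mathcal U^Q}^2$. Choosing $\vec v=\vec\delta$ and using \eqref{eq:KQ-U-bound} then gives $\tfrac12\|\vec\delta\|^2\le\eta_Q\|\vec w_1-\vec w_2\|\,\|\vec\delta\|$, hence $\|T^Q\vec w_1-T^Q\vec w_2\|_{\mathcal U^Q}\le 2\eta_Q\|\vec w_1-\vec w_2\|_{\mathcal U^Q}$.

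To conclude, Banach's fixed point theorem applies once $\|\cdot\|_{\mathcal U^Q}$ is a genuine norm on a complete space. Positivity follows from $\mu>0$, since it gives $\|w_i\|_{L^2}\le\mu^{-1/2}\|w_i\|_{\mathcal U_i}$. I expect completeness to be the main obstacle, exactly as it is tacitly in Proposition~\ref{prop:SI-continuum}. I would handle it by working on the completion of $\mathcal U^Q$ in this norm (or on the closed affine set of inflow-compatible functions), and noting that $T^Q$ extends continuously by the Lipschitz bound just proved. A fixed point $\vec u=T^Q\vec u$ solves \eqref{eq:discrete_in_angle}. Uniqueness follows by applying the same coercivity estimate to the difference of two solutions, which gives $\|\vec\delta\|\le2\eta_Q\|\vec\delta\|$ and therefore $\vec\delta=0$.
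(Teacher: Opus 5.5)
Your proposal is correct and follows the paper's proof: you aggregate the directional problems with the weights $\varpi_i$, subtract two Picard images so that the fixed inflow datum cancels, apply the per-direction estimate $a(\delta_i,\delta_i;\vec\omega_i)\ge\tfrac12\|\delta_i\|_{\mathcal U_i}^2$, sum it, and use \eqref{eq:KQ-U-bound} to get the factor $2\eta_Q$ before invoking Banach's theorem. Your remark that $\mathcal U^Q$ need not be complete in the weaker norm $\|\cdot\|_{\mathcal U^Q}$ addresses a point the paper leaves tacit; to finish the completion argument, add that $T^Q$ sees its argument only through $L^2$ sources and maps into $\mathcal U^Q$, so the fixed point $\vec u=T^Q\vec u$ lies in $\mathcal U^Q$.
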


\begin{proof}
The argument is identical to the continuum proof of
Proposition~\ref{prop:SI-continuum}, with the integrated forms over
$\mathbb S^{d-1}$ replaced by the quadrature sums
$\sum_{i=1}^Q\varpi_i(\cdot)$ and with the space $\mathcal U$ replaced by
the product graph space $\mathcal U^Q$. In particular, taking differences
between successive iterates eliminates the fixed inflow datum, so the
error has homogeneous inflow trace on each $\Gamma_-(\vec\omega_i)$.
Testing the discrete error equation with the discrete error, applying
coercivity of the transport part (the discrete analogue of
Theorem~\ref{the:coercivity} with $\vec\omega$ fixed to $\vec\omega_i$),
and then using \eqref{eq:KQ-U-bound} yields the contraction factor
$2\eta_Q$.
\end{proof}

\section{Error control for the angular approximation}
\label{sec:errorcontrol}

This section quantifies the error introduced by restricting and
discretising angle and by terminating the source iteration. Throughout we
work with the elastic model of Section~\ref{sec:angular}, that is,
\eqref{eq:BTE} with the gain replaced by the elastic operator.

\subsection{Setting and error decomposition}
\label{subsec:errorcontrol-setting}

Let $\mathcal A$ denote either $\mathbb S^{d-1}$ or the cone
$\mathcal C(\vec\omega_\star,\theta_c)$ from \eqref{eq:cone}. Define the
$\mathcal A$-restricted elastic gain
\[
  \mathcal{K}_{\mathrm{el}}^{\mathcal A}[\psi](\vec{x},E,\vec{\omega})
  :=
  \int_{\mathcal A}
  \sigma_{\mathrm{el}}(\vec{\omega},\vec{\omega}',E) 
  \psi(\vec{x},E,\vec{\omega}') \d\vec{\omega}'.
\]
Let $\psi^{\mathcal A}$ denote the corresponding continuum solution of
\eqref{eq:BTE} with $\mathcal K_{\mathrm{el}}$ replaced by
$\mathcal K_{\mathrm{el}}^{\mathcal A}$. In particular,
$\psi^{\mathbb S^{d-1}}=\psi$.

Fix an angular domain $\mathcal A\subseteq \mathbb S^{d-1}$ (either
$\mathbb S^{d-1}$ or a cone) and an angular rule
$\{(\vec\omega_j,w_j)\}_{j=1}^Q\subset\mathcal A\times(0,\infty)$.
Let $\psi^{\mathcal A}$ denote the continuum solution of the elastic
transport problem in which the gain integral is restricted to
$\mathcal A$. Let $\{\psi_i^{Q}\}_{i=1}^Q$ denote the exact solution of
the coupled $Q$-direction discrete-ordinates system \eqref{eq:DOM} posed
on $\mathcal A$ with the same angular rule, and let
$\{\psi_i^{Q,n}\}_{i=1}^Q$ be the $n$th source-iteration iterate.

Then, for each node $\vec\omega_i$,
\begin{equation}
  \label{eq:error-split}
  \psi(\cdot,\cdot,\vec\omega_i)-\psi_i^{Q,n}
  =
  \big(\psi(\cdot,\cdot,\vec\omega_i)-\psi^{\mathcal A}(\cdot,\cdot,\vec\omega_i)\big)
  +
  \big(\psi^{\mathcal A}(\cdot,\cdot,\vec\omega_i)-\psi_i^{Q}\big)
  +
  \big(\psi_i^{Q}-\psi_i^{Q,n}\big).
\end{equation}
The first term is a model-truncation contribution, it vanishes when
$\mathcal A=\mathbb S^{d-1}$ (since then $\psi^{\mathcal A}=\psi$), and
when $\mathcal A$ is a cone it measures the effect of omitting
scattering into directions outside $\mathcal A$. The second term is the
discrete-ordinates (quadrature/collocation) contribution for the
$\mathcal A$-restricted model, it compares the continuum solution
$\psi^{\mathcal A}$ sampled at $\vec\omega_i$ with the exact coupled DOM
solution on the same ordinates. The third term is the source-iteration
contribution for solving the coupled DOM system.

To transfer gain consistency bounds into solution bounds we use the
directional coercivity norm induced by the transport form. For each
direction $\vec{\omega}_i$ define
\[
  \|w\|_{\mathcal U_i}^2
  :=
  \|\sigma_T^{1/2}w\|_{L^2(D\times I)}^2
  + \|(-S')^{1/2}w\|_{L^2(D\times I)}^2
  + \int_{\Gamma_+(\vec{\omega}_i)}
  \qp{\vec{\omega}_i\cdot\vec{n}_{\vec x}-S(E)n_E} w^2 \d\gamma,
\]
and the weighted product norm
\[
  \|\vec w\|_{\mathcal U^Q}^2 := \sum_{i=1}^Q \varpi_i \|w_i\|_{\mathcal U_i}^2.
\]

\begin{lemma}[Directional stability]
\label{lem:directional-stability}
Fix $\vec{\omega}_i$ and consider two right-hand sides
$F,\widetilde F\in \mathcal U_i'$ and two solutions
$u,\widetilde u\in\mathcal U_i$ with the same inflow data such that
\[
  a(u,v;\vec{\omega}_i)=F(v),
  \qquad
  a(\widetilde u,v;\vec{\omega}_i)=\widetilde F(v)
  \qquad \text{for all } v\in \mathcal U_i.
\]
Then
\[
  \|u-\widetilde u\|_{\mathcal U_i}
  \le
  2 \|F-\widetilde F\|_{\mathcal U_i'}.
\]
\end{lemma}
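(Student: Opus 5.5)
The plan is the standard Lax--Milgram-type stability argument, applied to the difference $\delta := u-\widetilde u$. Bilinearity of $a(\cdot,\cdot;\vec\omega_i)$ gives
\[
  a(\delta,v;\vec\omega_i) = (F-\widetilde F)(v) \qquad \text{for all } v\in\mathcal U_i .
\]
Since $u$ and $\widetilde u$ share the same inflow data, $\delta$ has homogeneous trace on $\Gamma_-(\vec\omega_i)$. This is exactly the situation covered by the coercivity part of Theorem~\ref{the:coercivity}.

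The second step is a directional coercivity estimate. We need it with $\vec\omega$ frozen at $\vec\omega_i$, not integrated over the sphere. The proof of Theorem~\ref{the:coercivity} already supplies this before the angular integration. Fix $\vec\omega$ and use the integration-by-parts identity for $a(u,u;\vec\omega)$. It has a full $\sigma_T$ term, one half of the $(-S')$ term, a full $\Gamma_+(\vec\omega)$ term, and a $\Gamma_-(\vec\omega)$ term that vanishes for zero inflow trace. This yields
\[
  a(\delta,\delta;\vec\omega_i) \ge \tfrac12 \|\delta\|_{\mathcal U_i}^2 .
\]
I will quote this pointwise-in-$\vec\omega$ inequality rather than the integrated statement. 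The norm $\|\cdot\|_{\mathcal U_i}$ defined above has exactly the three pieces appearing there, so no additional constant enters.

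Finally, test with $v=\delta$. By the definition of the dual norm $\|G\|_{\mathcal U_i'} := \sup_{v\ne 0} |G(v)|/\|v\|_{\mathcal U_i}$,
\[
  \tfrac12\|\delta\|_{\mathcal U_i}^2 \le a(\delta,\delta;\vec\omega_i) = (F-\widetilde F)(\delta) \le \|F-\widetilde F\|_{\mathcal U_i'}\,\|\delta\|_{\mathcal U_i}.
\]
If $\|\delta\|_{\mathcal U_i}=0$ there is nothing to prove; otherwise divide by $\|\delta\|_{\mathcal U_i}$ to obtain the factor $2$.

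The only delicate point is a functional-analytic one. The dual norm must be taken with respect to $\|\cdot\|_{\mathcal U_i}$, and $\delta$ must be an admissible test function, since it lies in $\mathcal U_i$ with zero inflow trace. The traces on $\Gamma_\pm(\vec\omega_i)$ must also make sense for the integration by parts. I would justify this by appealing to the inflow-trace theory for graph spaces cited in the introduction (Cessenat), or by density of smooth functions, exactly as is implicitly done in Theorem~\ref{the:coercivity}. The positivity $\mu>0$ is not needed here.
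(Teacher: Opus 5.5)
Your proof is correct and follows the paper's argument: subtract the two equations, test with $\delta=u-\widetilde u$ (which has zero inflow trace), then combine coercivity with the dual-norm bound to get the factor $2$. Your explicit use of the fixed-$\vec\omega$ coercivity identity from inside the proof of Theorem~\ref{the:coercivity}, rather than its angularly integrated statement, is exactly what the paper's one-line citation relies on implicitly.
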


\begin{proof}
Subtract the two identities and test with $v=u-\widetilde u$. The error
has homogeneous inflow trace, so Theorem~\ref{the:coercivity} gives
$\tfrac12\|u-\widetilde u\|_{\mathcal U_i}^2
\le |(F-\widetilde F)(u-\widetilde u)|\le
\|F-\widetilde F\|_{\mathcal U_i'}\|u-\widetilde u\|_{\mathcal U_i}$.
\end{proof}

\subsection{Quadrature consistency on $\mathcal A$}
\label{subsec:errorcontrol-quadrature}

For an integrand $f:\mathcal A\to\mathbb R$ set
\[
  I_{\mathcal A}(f):=\int_{\mathcal A} f(\vec\omega) \d\vec\omega,
  \qquad
  Q_{\mathcal A}(f):=\sum_{j=1}^Q \mu_j f(\vec\omega_j).
\]
Define the angular fill distance
\[
  h_{\vec\omega}
  :=
  \sup_{\vec{\omega}\in\mathcal A}\min_{1\le j\le Q}
  \arccos\qp{\vec{\omega}\cdot\vec{\omega}_j}.
\]
If the rule is exact for spherical polynomials on $\mathcal A$ up to
total degree $m$ and $f\in W^{m+1,1}(\mathcal A)$, then there exists
$C=C(m,d)$ such that
\begin{equation}
  \label{eq:quad-error}
  |I_{\mathcal A}(f)-Q_{\mathcal A}(f)|
  \le
  C h_{\vec\omega}^{m+1}
  \sum_{|\alpha|=m+1}\|D_{\tan}^\alpha f\|_{L^1(\mathcal A)}.
\end{equation}

Applied pointwise in $(\vec x,E)$ to the elastic gain integrand
$f(\vec\omega')=\sigma_{\mathrm{el}}(\vec\omega_i,\vec\omega',E) 
\psi^{\mathcal A}(\vec x,E,\vec\omega')$, \eqref{eq:quad-error} bounds the
gain consistency defect
\[
  \qp{\mathcal K_{\mathrm{el}}^{\mathcal A}[\psi^{\mathcal A}]}
  (\vec x,E,\vec\omega_i)
  -
  \sum_{j=1}^Q \mu_j \sigma_{\mathrm{el}}(\vec\omega_i,\vec\omega_j,E) 
  \psi^{\mathcal A}(\vec x,E,\vec\omega_j).
\]
Using Lemma~\ref{lem:directional-stability} yields the corresponding bound
on $\psi_i^{\mathcal A}-\psi_i^Q$.

\begin{proposition}[Quadrature-induced angular discretisation error]
\label{prop:quad-to-solution}
Assume the hypotheses of Theorem~\ref{the:coercivity}. Assume moreover
that the angular rule on $\mathcal A$ is exact for spherical polynomials
up to degree $m$ and that for each $i$ the gain integrand
$\vec\omega'\mapsto \sigma_{\mathrm{el}}(\vec\omega_i,\vec\omega',E) 
\psi^{\mathcal A}(\vec x,E,\vec\omega')$ belongs to $W^{m+1,1}(\mathcal A)$
for a.e.\ $(\vec x,E)\in D\times I$, with tangential derivatives
integrable in $(\vec x,E)$. Then there exists $C_{\mathrm{ang}}=C(m,d)>0$
such that for each $i=1,\dots,Q$,
\[
  \|\psi_i^{\mathcal A}-\psi_i^{Q}\|_{\mathcal U_i}
  \le
  C_{\mathrm{ang}} h_{\vec\omega}^{m+1} 
  \Xi_{m+1}^{(i)}(\psi^{\mathcal A},\sigma_{\mathrm{el}}),
\]
where
\[
  \Xi_{m+1}^{(i)}(\psi^{\mathcal A},\sigma_{\mathrm{el}})
  :=
  \sum_{|\alpha|=m+1}
  \big\|D_{\tan}^\alpha\!\big(\sigma_{\mathrm{el}}(\vec\omega_i,\cdot,E) 
  \psi^{\mathcal A}(\vec x,E,\cdot)\big)\big\|_{L^1\big(D\times I;L^1(\mathcal A)\big)}.
\]
\end{proposition}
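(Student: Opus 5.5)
The plan is to realise $\psi^{\mathcal A}(\cdot,\cdot,\vec\omega_i)$ and $\psi_i^{Q}$ as solutions of two directional problems for the same form $a(\cdot,\cdot;\vec\omega_i)$ with the same inflow datum $g(\cdot,\cdot,\vec\omega_i)$ on $\Gamma_-(\vec\omega_i)$. Then Lemma~\ref{lem:directional-stability} turns the bound into a bound on the difference of right-hand sides, and \eqref{eq:quad-error} bounds that difference pointwise in $(\vec x,E)$. This follows the route the paper sketches just before the statement: the gain consistency defect, then Lemma~\ref{lem:directional-stability}.

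\textbf{Step 1 (the two directional problems).} Evaluating the $\mathcal A$-restricted continuum problem at the node $\vec\omega_i$, the trace $\psi_i^{\mathcal A}:=\psi^{\mathcal A}(\cdot,\cdot,\vec\omega_i)\in\mathcal U_i$ satisfies
\[
a(\psi_i^{\mathcal A},v;\vec\omega_i)=F(v)+l(v;\vec\omega_i),
\qquad
F(v):=\int_D\int_I \mathcal K_{\mathrm{el}}^{\mathcal A}[\psi^{\mathcal A}](\vec x,E,\vec\omega_i)\,v\,\d E\,\d\vec x .
\]
For the discrete-ordinates side, the directional equation for ordinate $i$ is driven by the quadrature gain. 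I would compare $\psi_i^{\mathcal A}$ with the directional solve whose source is the quadrature gain evaluated on the samples $\psi^{\mathcal A}(\cdot,\cdot,\vec\omega_j)$, namely
\[
\widetilde F(v):=\int_D\int_I \sum_{j=1}^Q \mu_j\,\sigma_{\mathrm{el}}(\vec\omega_i,\vec\omega_j,E)\,\psi^{\mathcal A}(\vec x,E,\vec\omega_j)\,v\,\d E\,\d\vec x .
\]
This is the consistency comparison that yields a bound involving only the $i$-th integrand, as $\Xi^{(i)}_{m+1}$ demands. The $\mathcal A$-restricted model and \eqref{eq:DOM} share the transport operator and the inflow datum, so the hypotheses of Lemma~\ref{lem:directional-stability} hold. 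It gives $\|\psi_i^{\mathcal A}-\psi_i^{Q}\|_{\mathcal U_i}\le 2\|F-\widetilde F\|_{\mathcal U_i'}$.

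\textbf{Step 2 (consistency defect).} The difference $F-\widetilde F$ is the pairing of $v$ with the defect $\delta_i(\vec x,E):=(I_{\mathcal A}-Q_{\mathcal A})(f_{\vec x,E})$, where $f_{\vec x,E}(\vec\omega')=\sigma_{\mathrm{el}}(\vec\omega_i,\vec\omega',E)\psi^{\mathcal A}(\vec x,E,\vec\omega')$. The hypothesis places $f_{\vec x,E}$ in $W^{m+1,1}(\mathcal A)$ for a.e.\ $(\vec x,E)$, and the rule is exact to degree $m$. So \eqref{eq:quad-error} gives, pointwise,
\[
|\delta_i(\vec x,E)|\le C(m,d)\,h_{\vec\omega}^{m+1}\,G_i(\vec x,E),
\qquad
G_i(\vec x,E):=\sum_{|\alpha|=m+1}\|D^\alpha_{\tan}f_{\vec x,E}\|_{L^1(\mathcal A)} .
\]
By the integrability hypothesis on the tangential derivatives, $\|G_i\|_{L^1(D\times I)}=\Xi^{(i)}_{m+1}(\psi^{\mathcal A},\sigma_{\mathrm{el}})$. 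Measurability of $G_i$ in $(\vec x,E)$ comes from Fubini.

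\textbf{Step 3 (dual norm, the main obstacle).} It remains to show $\|F-\widetilde F\|_{\mathcal U_i'}\lesssim h_{\vec\omega}^{m+1}\Xi^{(i)}_{m+1}$, i.e.\ to bound $|\int\delta_i v|$ by $\|G_i\|_{L^1}\|v\|_{\mathcal U_i}$. This is where I expect the difficulty. $\Xi^{(i)}$ is an $L^1$ quantity in $(\vec x,E)$, while $\|\cdot\|_{\mathcal U_i}$ controls only $L^2$ plus boundary traces (via $\mu$, $\|v\|_{L^2}\le\mu^{-1/2}\|v\|_{\mathcal U_i}$). My first attempt would be Hölder, $|\int\delta_i v|\le\|\delta_i\|_{L^1}\|v\|_{L^\infty}$, with the $L^\infty$ bound on the test function taken from the characteristic representation of directional solutions. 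The test functions that matter are differences of sweeps, which are bounded along characteristics by their inflow data and sources. The fallback is to read the $L^1(D\times I)$ norm in $\Xi^{(i)}$ as paired against an $L^2$-controlled test function and absorb the factor $\mu^{-1/2}$ into $C_{\mathrm{ang}}$. That constant would then be independent of the mesh and of $Q$, but it would depend on $\mu$ as well as on $(m,d)$. I would state explicitly which reading is used.

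If one insists that $\psi_i^Q$ be the fully coupled DOM solution rather than the consistency solve of Step~1, an extra propagation term appears: the quadrature gain applied to $\psi^{\mathcal A}-\psi^Q$. It is controlled by \eqref{eq:KQ-U-bound} and absorbed using $2\eta_Q<1$. That route, however, aggregates the defects over all $j$, so for the per-ordinate statement as worded I would present the consistency comparison of Steps~1--3.
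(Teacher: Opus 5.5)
You follow the paper's route exactly. The paper gives no proof beyond the remark preceding the statement: apply \eqref{eq:quad-error} pointwise in $(\vec x,E)$ to the gain defect, then invoke Lemma~\ref{lem:directional-stability}. However, your proposal does not prove the proposition. The two places where you hesitate are genuine gaps, and that one-line sketch has the same two holes.

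\textbf{First}, Step~1 compares $\psi_i^{\mathcal A}$ with the wrong object. Lemma~\ref{lem:directional-stability} requires $\psi_i^Q$ to solve the directional problem with data $\widetilde F+l$. But by Section~\ref{subsec:errorcontrol-setting}, $\psi_i^Q$ is the coupled DOM solution, whose source is $\sum_j\mu_j\sigma_{\mathrm{el}}(\vec\omega_i,\vec\omega_j,E)\psi_j^Q$. That is not the same sum applied to the samples of $\psi^{\mathcal A}$. Your sentence ``It gives $\|\psi_i^{\mathcal A}-\psi_i^{Q}\|_{\mathcal U_i}\le 2\|F-\widetilde F\|_{\mathcal U_i'}$'' holds only for the auxiliary consistency solve, so retreating to that solve proves a different statement. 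Set $e_j:=\psi^{\mathcal A}(\cdot,\cdot,\vec\omega_j)-\psi_j^Q$. The true error equation is $a(e_i,v;\vec\omega_i)=\int_D\int_I\delta_i v\,\mathrm dE\,\mathrm d\vec x+s^Q(\vec e,v;\vec\omega_i)$, and the coupling term can only be absorbed globally. Summing with weights $\varpi_i$, testing with $v_i=e_i$, and using directional coercivity together with \eqref{eq:KQ-U-bound} gives
\[
  \|\vec e\|_{\mathcal U^Q}
  \le
  \frac{2}{1-2\eta_Q}\Big(\sum_{j=1}^Q\varpi_j\|\delta_j\|_{\mathcal U_j'}^2\Big)^{1/2},
  \qquad
  \|e_i\|_{\mathcal U_i}\le\varpi_i^{-1/2}\|\vec e\|_{\mathcal U^Q}.
\]
This needs $2\eta_Q<1$, which is not among the proposition's hypotheses. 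It also involves every defect $\delta_j$, so a bound for ordinate $i$ in terms of $\Xi^{(i)}_{m+1}$ alone is not what this argument yields.

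\textbf{Second}, Step~3 cannot be closed with the outer $L^1(D\times I)$ norm, and your first attempt fails. The norm $\|\cdot\|_{\mathcal U_i}$ is a weighted $L^2(D\times I)$ norm plus an outflow trace. Hence the supremum defining $\|\cdot\|_{\mathcal U_i'}$ ranges over unbounded functions such as $v=\phi(\vec x_\perp)\chi(\vec\omega_i\cdot\vec x,E)$, with $\chi$ smooth, $\phi\in L^2\setminus L^\infty$, and $\vec x_\perp$ the component of $\vec x$ orthogonal to $\vec\omega_i$. No $L^1$ quantity bounds that dual norm. Testing only with $v=e_i$ and bounding $\|e_i\|_{L^\infty}$ by the data yields only $h_{\vec\omega}^{(m+1)/2}$.

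In fact the inequality as written is false. Replace $\vec x$ by $L\vec x$ and $S,\sigma_T,\sigma_{\mathrm{el}}$ by $L^{-1}S,L^{-1}\sigma_T,L^{-1}\sigma_{\mathrm{el}}$. This preserves all hypotheses, $h_{\vec\omega}$, and (after the change of variables) all solutions. It multiplies $\|e_i\|_{\mathcal U_i}$ by $L^{(d-1)/2}$ and $\Xi^{(i)}_{m+1}$ by $L^{d-1}$. The claimed bound would then give $\|e_i\|_{\mathcal U_i}\le C(m,d)L^{(d-1)/2}h_{\vec\omega}^{m+1}\Xi^{(i)}_{m+1}$ for every $L>0$, forcing $e_i=0$.

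Your fallback is the correct repair, but only if you actually replace the outer $L^1(D\times I)$ norm by $L^2(D\times I)$; pairing an $L^1$ quantity against an $L^2$-controlled test function gives nothing. Cauchy--Schwarz, $\|v\|_{L^2}\le\mu^{-1/2}\|v\|_{\mathcal U_j}$, and your pointwise bound give $\|\delta_j\|_{\mathcal U_j'}\le\mu^{-1/2}C(m,d)h_{\vec\omega}^{m+1}\|G_j\|_{L^2(D\times I)}$. Combined with the display above, this yields
\[
  \|e_i\|_{\mathcal U_i}
  \le
  \frac{2\,C(m,d)}{\mu^{1/2}(1-2\eta_Q)\,\varpi_i^{1/2}}\,
  h_{\vec\omega}^{m+1}
  \Big(\sum_{j=1}^Q\varpi_j\|G_j\|_{L^2(D\times I)}^2\Big)^{1/2}.
\]
This corrected statement is what you should state and prove, rather than leaving the reading open. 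It uses an $L^2(D\times I;L^1(\mathcal A))$ version of $\Xi$, has constants depending on $\mu$ and $\eta_Q$, and sums over all ordinates.
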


\subsection{Cone truncation}
\label{subsec:errorcontrol-truncation}

If $\mathcal A=\mathbb S^{d-1}$ this contribution vanishes. Otherwise
$\mathcal A=\mathcal C(\vec\omega_\star,\theta_c)$ and the truncated gain
omits scattering from $\mathbb S^{d-1}\setminus\mathcal C(\vec\omega_\star,\theta_c)$.
For any $f:\mathbb S^{d-1}\to\mathbb R$,
\[
  \int_{\mathbb S^{d-1}} f(\vec\omega') \d\vec\omega'
  -
  \int_{\mathcal C(\vec\omega_\star,\theta_c)} f(\vec\omega') \d\vec\omega'
  =
  \int_{\mathbb S^{d-1}\setminus\mathcal C(\vec\omega_\star,\theta_c)}
  f(\vec\omega') \d\vec\omega'.
\]
A crude estimate is
\begin{equation}
  \label{eq:cone_bound}
  \Big|\int_{\mathbb S^{d-1}\setminus\mathcal C(\vec\omega_\star,\theta_c)}
  f(\vec\omega') \d\vec\omega'\Big|
  \le
  \sup_{\vec\omega'\in\mathbb S^{d-1}\setminus\mathcal C(\vec\omega_\star,\theta_c)}
  |f(\vec\omega')| 
  \big|\mathbb S^{d-1}\setminus\mathcal C(\vec\omega_\star,\theta_c)\big|.
\end{equation}
Moreover, for forward-peaked kernels a more informative measure is the
cone tail
\[
  e_{\mathrm{cone}}(E)
  :=
  \sup_{\vec\omega\in\mathcal C(\vec\omega_\star,\theta_c)}
  \int_{\mathbb S^{d-1}\setminus\mathcal C(\vec\omega_\star,\theta_c)}
  \sigma_{\mathrm{el}}(\vec\omega,\vec\omega',E) \d\vec\omega'.
\]
This quantity measures the missing elastic gain at energy $E$. It enters
\eqref{eq:error-split} through the truncation term
$\psi(\cdot,\cdot,\vec\omega_i)-\psi^{\mathcal A}(\cdot,\cdot,\vec\omega_i)$.

\subsection{Source-iteration error and stopping}
\label{subsec:SIerror}

By Proposition~\ref{prop:SI-discrete}, the discrete source iteration is a
contraction in $\mathcal U^Q$ provided the discrete scattering operator
has bound $\eta_Q$ in $\|\cdot\|_{\mathcal U^Q}$ satisfying $2\eta_Q<1$.
With $\rho:=2\eta_Q\in(0,1)$ and $\vec u^{(\infty)}$ denoting the exact
solution of \eqref{eq:discrete_in_angle},
\begin{equation}
  \label{eq:iter-geometric}
  \|\vec u^{(n)}-\vec u^{(\infty)}\|_{\mathcal U^Q}
  \le
  \rho^n \|\vec u^{(0)}-\vec u^{(\infty)}\|_{\mathcal U^Q},
\end{equation}
and the contraction property yields the a posteriori estimate
\begin{equation}
  \label{eq:iter-apost}
  \|\vec u^{(n)}-\vec u^{(\infty)}\|_{\mathcal U^Q}
  \le
  \frac{\rho}{1-\rho} \|\vec u^{(n)}-\vec u^{(n-1)}\|_{\mathcal U^Q}.
\end{equation}
Hence, for a prescribed tolerance $\varepsilon>0$, it suffices to stop
when
\[
  \|\vec u^{(n)}-\vec u^{(n-1)}\|_{\mathcal U^Q}
  \le
  \frac{1-\rho}{\rho} \varepsilon.
\]

\begin{theorem}[Angular discretisation, truncation and iteration error]
\label{thm:combined-error}
Assume the hypotheses of Theorem~\ref{the:coercivity}. Fix $\mathcal
A\in\{\mathbb S^{d-1}, \mathcal C(\vec\omega_\star,\theta_c)\}$ and a
quadrature rule $\{(\vec\omega_j,\mu_j)\}_{j=1}^Q\subset\mathcal
A\times(0,\infty)$ with fill distance $h_{\vec\omega}$. Let $\psi$ be
the full-sphere continuum solution, let $\psi^{\mathcal A}$ be the
$\mathcal A$-restricted continuum solution and let $\psi_i^{Q,n}$ be
the $n$th source-iteration iterate for the coupled $Q$-direction
discrete-ordinates system posed on $\mathcal A$.

Assume that the quadrature hypotheses of Proposition~\ref{prop:quad-to-solution}
hold and that the discrete source iteration is contractive with factor
$\rho\in(0,1)$ so that \eqref{eq:iter-apost} holds. Then for each node
$\vec\omega_i$ there exist constants $C_{\mathrm{ang}}>0$ and
$C_{\mathrm{stab}}>0$ such that
\begin{equation}
  \label{eq:combined-error}
  \|\psi(\cdot,\cdot,\vec\omega_i)-\psi_i^{Q,n}\|_{\mathcal U_i}
  \le
  \|(\psi-\psi^{\mathcal A})(\cdot,\cdot,\vec\omega_i)\|_{\mathcal U_i}
  +
  C_{\mathrm{ang}} h_{\vec\omega}^{m+1} \Xi_{m+1}^{(i)}(\psi^{\mathcal A},\sigma_{\mathrm{el}})
  +
  C_{\mathrm{stab}}\frac{\rho}{1-\rho} 
  \|\vec u^{(n)}-\vec u^{(n-1)}\|_{\mathcal U^Q}.
\end{equation}
If $\mathcal A=\mathbb S^{d-1}$ the first term vanishes. If
$\mathcal A=\mathcal C(\vec\omega_\star,\theta_c)$, the first term is the
cone-truncation contribution and can be estimated in terms of the cone
tail $e_{\mathrm{cone}}(E)$ from Section~\ref{subsec:errorcontrol-truncation}
under additional control of $\psi$ outside the cone.
\end{theorem}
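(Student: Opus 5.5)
The plan is to start from the three-term splitting \eqref{eq:error-split} at the node $\vec\omega_i$ and apply the triangle inequality in $\|\cdot\|_{\mathcal U_i}$. This gives
\[
  \|\psi(\cdot,\cdot,\vec\omega_i)-\psi_i^{Q,n}\|_{\mathcal U_i}
  \le
  \|(\psi-\psi^{\mathcal A})(\cdot,\cdot,\vec\omega_i)\|_{\mathcal U_i}
  +\|\psi^{\mathcal A}(\cdot,\cdot,\vec\omega_i)-\psi_i^{Q}\|_{\mathcal U_i}
  +\|\psi_i^{Q}-\psi_i^{Q,n}\|_{\mathcal U_i}.
\]
The first term is kept as it is; it is the cone-truncation contribution. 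When $\mathcal A=\mathbb S^{d-1}$ it vanishes identically, because $\psi^{\mathbb S^{d-1}}=\psi$ by definition. For the cone case I would only remark that testing the difference of the full and restricted problems against itself, together with Lemma~\ref{lem:directional-stability}, bounds it by the dual norm of $(\mathcal K_{\mathrm{el}}-\mathcal K_{\mathrm{el}}^{\mathcal A})[\psi]$. That dual norm is controlled through \eqref{eq:cone_bound} by $e_{\mathrm{cone}}(E)$ times a bound on $\psi$ outside the cone. The theorem statement itself only asserts this qualitatively.

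The second term is handled directly by Proposition~\ref{prop:quad-to-solution}, whose hypotheses are assumed. Its conclusion is exactly $C_{\mathrm{ang}}h_{\vec\omega}^{m+1}\Xi^{(i)}_{m+1}(\psi^{\mathcal A},\sigma_{\mathrm{el}})$. To justify that proposition I would write the directional equations for $\psi^{\mathcal A}(\cdot,\cdot,\vec\omega_i)$ and for $\psi_i^Q$, which share the same inflow data. Their right-hand sides differ by the quadrature defect of the gain plus the discrete gain applied to $\psi^{\mathcal A}-\psi^Q$. Lemma~\ref{lem:directional-stability} and the $L^1$-to-dual pairing bound from \eqref{eq:quad-error} then give the estimate. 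I regard the coupling term as already absorbed into the constant of the cited proposition, and I will not reprove it.

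The third term is the iteration error, and it needs a bridge from the weighted product norm to the single-ordinate norm. Since all weights $\varpi_j>0$,
\[
\varpi_i\|\psi_i^Q-\psi_i^{Q,n}\|_{\mathcal U_i}^2\le\sum_j\varpi_j\|\psi_j^Q-\psi_j^{Q,n}\|_{\mathcal U_j}^2=\|\vec u^{(\infty)}-\vec u^{(n)}\|_{\mathcal U^Q}^2 .
\]
I would therefore set $C_{\mathrm{stab}}:=\varpi_i^{-1/2}$ (or $(\min_j\varpi_j)^{-1/2}$ to make it uniform in $i$). Applying \eqref{eq:iter-apost} then gives $C_{\mathrm{stab}}\frac{\rho}{1-\rho}\|\vec u^{(n)}-\vec u^{(n-1)}\|_{\mathcal U^Q}$.

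The a posteriori bound \eqref{eq:iter-apost} itself comes from the contraction in Proposition~\ref{prop:SI-discrete}. The contraction gives $\|\vec u^{(n)}-\vec u^{(\infty)}\|\le\rho\|\vec u^{(n-1)}-\vec u^{(\infty)}\|\le\rho(\|\vec u^{(n-1)}-\vec u^{(n)}\|+\|\vec u^{(n)}-\vec u^{(\infty)}\|)$, and rearranging yields the bound. Adding the three contributions gives \eqref{eq:combined-error}.

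The main obstacle is the second step. In the discrete-ordinates comparison the error equation is not decoupled: $\psi^{\mathcal A}_j-\psi^Q_j$ at all other nodes enters the right-hand side for node $i$. A clean per-node bound therefore needs either the contraction/coercivity of the coupled system, i.e.\ $2\eta_Q<1$, to absorb that term, or acceptance that $C_{\mathrm{ang}}$ depends on $(1-\rho)^{-1}$. I would first try to absorb it using \eqref{eq:KQ-U-bound} in the product norm and then extract node $i$ by the same weight argument as above. Otherwise I would simply invoke Proposition~\ref{prop:quad-to-solution} as stated.
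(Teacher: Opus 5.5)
Your proposal is correct and follows the paper's proof. The paper applies the triangle inequality to \eqref{eq:error-split}, invokes Proposition~\ref{prop:quad-to-solution} for the middle term, and bounds the last term by \eqref{eq:iter-apost} together with an unspecified embedding $\|\psi_i^Q-\psi_i^{Q,n}\|_{\mathcal U_i}\le C_{\mathrm{stab}}\|\vec u^{(\infty)}-\vec u^{(n)}\|_{\mathcal U^Q}$, which your weight argument makes explicit with $C_{\mathrm{stab}}=\varpi_i^{-1/2}$. The coupling concern you raise concerns Proposition~\ref{prop:quad-to-solution} itself, which the paper states without proof; the theorem may use that proposition as a black box, so this is not a gap in your argument.
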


\begin{proof}
Apply the decomposition \eqref{eq:error-split}. Bound
$\psi_i^{\mathcal A}-\psi_i^Q$ by Proposition~\ref{prop:quad-to-solution}.
Bound $\psi_i^Q-\psi_i^{Q,n}$ by \eqref{eq:iter-apost} and the embedding
$\|\psi_i^Q-\psi_i^{Q,n}\|_{\mathcal U_i}\le
C_{\mathrm{stab}}\|\vec u^{(\infty)}-\vec u^{(n)}\|_{\mathcal U^Q}$.
\end{proof}

\begin{corollary}[Separable tolerance allocation]
\label{cor:errorcontrol-balance}
In the setting of Theorem~\ref{thm:combined-error}, a sufficient
condition for
$\|\psi(\cdot,\cdot,\vec\omega_i)-\psi_i^{Q,n}\|_{\mathcal U_i}\le \varepsilon$
is
\[
  \|(\psi-\psi_i^{\mathcal A})(\cdot,\cdot,\vec\omega_i)\|_{\mathcal U_i}
  \le \tfrac{\varepsilon}{3},
  \qquad
  C_{\mathrm{ang}} h_{\vec\omega}^{m+1} \Xi_{m+1}^{(i)}(\psi^{\mathcal A},\sigma_{\mathrm{el}})
  \le \tfrac{\varepsilon}{3},
  \qquad
  \frac{\rho}{1-\rho} \|\vec u^{(n)}-\vec u^{(n-1)}\|_{\mathcal U^Q}
  \le \tfrac{\varepsilon}{3C_{\mathrm{stab}}}.
\]
\end{corollary}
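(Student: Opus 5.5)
The plan is to read Corollary~\ref{cor:errorcontrol-balance} directly off the three-term bound \eqref{eq:combined-error} of Theorem~\ref{thm:combined-error}. No new analysis is needed; it is a triangle-inequality budget argument.

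First, I invoke Theorem~\ref{thm:combined-error} at the fixed node $\vec\omega_i$. This bounds $\|\psi(\cdot,\cdot,\vec\omega_i)-\psi_i^{Q,n}\|_{\mathcal U_i}$ by the sum of three nonnegative quantities: the truncation term $\|(\psi-\psi^{\mathcal A})(\cdot,\cdot,\vec\omega_i)\|_{\mathcal U_i}$, the quadrature term $C_{\mathrm{ang}}h_{\vec\omega}^{m+1}\Xi^{(i)}_{m+1}$, and the iteration term $C_{\mathrm{stab}}\frac{\rho}{1-\rho}\|\vec u^{(n)}-\vec u^{(n-1)}\|_{\mathcal U^Q}$.

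Second, I bound each term by its allocated tolerance.
\begin{itemize}
\item The first hypothesis gives the truncation term at most $\varepsilon/3$. Here I read the notation $\psi_i^{\mathcal A}$ in the corollary as $\psi^{\mathcal A}(\cdot,\cdot,\vec\omega_i)$, consistent with Proposition~\ref{prop:quad-to-solution}.
\item The second hypothesis gives the quadrature term at most $\varepsilon/3$.
\item For the iteration term, I multiply the third hypothesis by $C_{\mathrm{stab}}>0$. This gives $C_{\mathrm{stab}}\frac{\rho}{1-\rho}\|\vec u^{(n)}-\vec u^{(n-1)}\|_{\mathcal U^Q}\le\varepsilon/3$.
\end{itemize}
Summing the three bounds gives the total error at most $\varepsilon$.

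The only point needing care is notational. The constants $C_{\mathrm{ang}}$ and $C_{\mathrm{stab}}$ in the corollary must be the same ones produced by Theorem~\ref{thm:combined-error} for the node $\vec\omega_i$. Also, $\rho\in(0,1)$ must hold, so that the factor $\rho/(1-\rho)$ is finite and positive; this is guaranteed by the contractivity assumption inherited from the theorem. Beyond matching these conventions, I expect no real obstacle.
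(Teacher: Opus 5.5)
Your argument is correct and is exactly the intended one: the paper gives no separate proof of this corollary, treating it as immediate from the three-term bound \eqref{eq:combined-error}, and splitting the budget into thirds is all that is required. This includes multiplying the third condition by $C_{\mathrm{stab}}>0$. Your reading of $\psi_i^{\mathcal A}$ as $\psi^{\mathcal A}(\cdot,\cdot,\vec\omega_i)$ is also the right way to resolve the notation.
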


\begin{example}[Henyey--Greenstein on $\mathbb{S}^{1}$]
Let
\[
  p_{\gamma}(\theta)
  =
  \frac{1}{2\pi}\frac{1-\gamma^{2}}{1+\gamma^{2}-2\gamma\cos\theta},
\]
with $\theta$ the angle from $\vec{\omega}_\star$. For $\gamma=0.95$
and $\theta_c=\pi/2$, the supremum of $p_\gamma$ on the excluded set
$|\theta|\in[\theta_c,\pi]$ occurs at $|\theta|=\theta_c$, giving
\[
  \sup_{\theta\in[\theta_c,\pi]} p_\gamma(\theta)
  =
  \frac{1}{2\pi}\frac{1-\gamma^2}{1+\gamma^2-2\gamma\cos\theta_c}
  \approx 8.16\times 10^{-3}.
\]
Multiplying by the excluded arc length $2(\pi-\theta_c)=\pi$ yields the
bound $|I-I_{\mathcal{C}}|\lesssim 2.56\times 10^{-2}$. For
$\theta_c=3\pi/4$, the same calculation gives
$|I-I_{\mathcal{C}}|\lesssim 7.5\times 10^{-3}$.
\end{example}

\section{Numerical Experiments: Proton Transport}
\label{sec:numericsproton}

We now present numerical experiments to verify correctness of the
discretisation, assess convergence of the source iteration scheme and
investigate behaviour across representative forward-peaked scattering
regimes. All simulations are conducted using a uniform Cartesian mesh
and a piecewise constant angular discretisation unless stated otherwise.

\subsection{Implementation of the numerical algorithm}
\label{subsec:proton_implementation}

We solve the angularly discrete transport system by source iteration,
using the same discrete-ordinates quadrature framework as in
Section~\ref{sec:angular}. In particular, we work with quadrature nodes
$\{\vec{\omega}_j\}_{j=1}^Q$ and weights $\{\mu_j\}_{j=1}^Q$ on the
chosen angular domain (full sphere or cone). The angularly discrete
unknowns are the directional values
\[
  \Psi_j(\vec{x},E) := \psi(\vec{x},E,\vec{\omega}_j),
  \qquad j=1,\dots,Q.
\]

In these experiments we restrict to elastic Coulomb scattering and
therefore use the elastic kernel $\sigma_{\mathrm{el}}$ from
Section~\ref{sec:angular}. The discrete elastic gain at node
$\vec{\omega}_i$ is approximated by the midpoint quadrature rule,
\begin{equation}
  \label{eq:rhs_scattering_numeric}
  \int_{\mathcal A}
  \sigma_{\mathrm{el}}(\vec{\omega}_i,\vec{\omega}',E) 
  \psi(\vec{x},E,\vec{\omega}') \d\vec{\omega}'
   \approx 
  \sum_{j=1}^Q \mu_j 
  \bar\sigma_{\mathrm{el}}(\vec{\omega}_i,\vec{\omega}_j,E) 
  \Psi_j(\vec{x},E),
\end{equation}
where $\bar\sigma_{\mathrm{el}}(\vec{\omega}_i,\vec{\omega}_j,E)$ is a
quadrature-consistent approximation of the kernel evaluated at the node
$\vec{\omega}_j$ (in the simplest case,
$\bar\sigma_{\mathrm{el}}(\vec{\omega}_i,\vec{\omega}_j,E)=
\sigma_{\mathrm{el}}(\vec{\omega}_i,\vec{\omega}_j,E)$).

To ensure discrete conservation of total fluence (gain--loss balance), we
enforce a discrete analogue of the identity in
Proposition~\ref{prop:conservation}. Writing
\[
  \Sigma_{\mathrm{el}}(E)
  :=
  \int_{\mathbb S^{d-1}}
  \sigma_{\mathrm{el}}(\vec{\omega},\vec{\omega}',E) \d\vec{\omega},
\]
(which is independent of $\vec{\omega}'$ under rotational invariance),
we impose the weighted row-sum condition
\begin{equation}
  \label{eq:discrete_fluence_conservation}
  \sum_{i=1}^Q \varpi_i \bar\sigma_{\mathrm{el}}(\vec{\omega}_i,\vec{\omega}_j,E)
  =
  \Sigma_{\mathrm{el}}(E),
  \qquad j=1,\dots,Q.
\end{equation}
Equivalently, for any angular vector $\{\Psi_j(\vec{x},E)\}_{j=1}^Q$ one has
\[
  \sum_{i=1}^Q \varpi_i
  \sum_{j=1}^Q \mu_j \bar\sigma_{\mathrm{el}}(\vec{\omega}_i,\vec{\omega}_j,E) 
  \Psi_j(\vec{x},E)
  =
  \Sigma_{\mathrm{el}}(E)\sum_{j=1}^Q \mu_j \Psi_j(\vec{x},E),
\]
so the discrete elastic gain balances the corresponding loss in the
weighted angular sum.

\begin{remark}[Relation to a finite-volume interpretation in angle]
Although we implement scattering via quadrature nodes and weights, the
formula \eqref{eq:rhs_scattering_numeric} can be read as a finite-volume
exchange between angular control volumes: the weights $\mu_j$ play the
role of angular ``cell measures'', while
$\bar\sigma_{\mathrm{el}}(\vec{\omega}_i,\vec{\omega}_j,E)$ acts as an
energy-dependent transfer rate from direction $\vec{\omega}_j$ to
$\vec{\omega}_i$. Condition \eqref{eq:discrete_fluence_conservation}
then corresponds to conservation of total fluence in the angular
finite-volume balance.

When the angular domain is restricted to a forward cone, some fluence
is necessarily lost at the boundary of the cone. In regimes where the
scattering kernel is strongly forward-peaked (as is typical in proton
and ion transport), this loss is negligible in practice.
\end{remark}

The angularly discrete system seeks functions $\Psi_i:D\times I\to\mathbb{R}$
for $i=1,\dots,Q$ satisfying
\begin{equation}
  \label{eq:BTEdiscrete}
  \vec{\omega}_i \cdot \nabla_{\vec{x}} \Psi_i(\vec{x},E)
  -
  \partial_E\qp{S(E)\Psi_i(\vec{x},E)}
  +
  \sigma_T(E)\Psi_i(\vec{x},E)
  =
  \sum_{j=1}^Q \mu_j \bar\sigma_{\mathrm{el}}(\vec{\omega}_i,\vec{\omega}_j,E) \Psi_j(\vec{x},E),
\end{equation}
which is the discrete-ordinates model \eqref{eq:DOM} with the elastic
scattering sum realised via the quadrature-consistent coefficients
$\bar\sigma_{\mathrm{el}}$. We solve \eqref{eq:BTEdiscrete} by the source
iteration method introduced in Section~\ref{subsec:sourceiteration}. The
algorithmic structure is summarised in Algorithm~\ref{alg:iterative_algorithm}.

\begin{algorithm}[t]
\caption{Source iteration with method-of-characteristics (MoC) directional sweeps}
\label{alg:iterative_algorithm}
\begin{algorithmic}[1]
\Require Quadrature nodes $\{\vec{\omega}_i\}_{i=1}^Q$ and weights $\{\varpi_i\}_{i=1}^Q$, Stopping power $S(E)$, removal $\sigma_T(E)$, elastic coupling $\bar\sigma_{\mathrm{el}}(\vec{\omega}_i,\vec{\omega}_j,E)$, Inflow data $g(\vec{x},E,\vec{\omega}_i)$ on $\Gamma_-(\vec{\omega}_i)$ for $i=1,\dots,Q$, Tolerance $\mathrm{TOL}>0$, maximum iterations $N_{\max}\in\mathbb N$
\Ensure An iterate $\vec{u}^{(n)}=(u_1^{(n)},\dots,u_Q^{(n)})$ with
$\|\vec{u}^{(n)}-\vec{u}^{(n-1)}\|_{\mathcal{U}^Q}\le \mathrm{TOL}$, or $n=N_{\max}$

\Statex

\State \textbf{Initialisation (ballistic MoC sweep):}
\For{$i=1$ to $Q$}
  \State Set $G_i^{(0)}(\vec{x},E)\gets 0$
  \State Compute $u_i^{(0)}$ by a MoC sweep in direction $\vec{\omega}_i$:
  integrate along characteristics of the drift $(\vec{\omega}_i,-S(E))$
  with inflow trace $g(\cdot,\cdot,\vec{\omega}_i)$ and source $G_i^{(0)}$
\EndFor

\Statex

\For{$n=1$ to $N_{\max}$}
  \State \textbf{Elastic gain assembly:}
  \For{$i=1$ to $Q$}
    \State $G_i^{(n)}(\vec{x},E)\gets \sum_{j=1}^Q \mu_j \bar\sigma_{\mathrm{el}}(\vec{\omega}_i,\vec{\omega}_j,E) u_j^{(n-1)}(\vec{x},E)$
  \EndFor

  \State \textbf{Directional MoC sweeps:}
  \For{$i=1$ to $Q$}
    \State Compute $u_i^{(n)}$ by a MoC sweep in direction $\vec{\omega}_i$:
    evaluate $u_i^{(n)}(\vec{x},E)$ via the characteristic map and the
    line integral of $G_i^{(n)}$ along characteristics of $(\vec{\omega}_i,-S(E))$,
    with inflow trace $g(\cdot,\cdot,\vec{\omega}_i)$
  \EndFor

  \State $\mathrm{ERR}\gets \|\vec{u}^{(n)}-\vec{u}^{(n-1)}\|_{\mathcal{U}^Q}$
  \If{$\mathrm{ERR}\le \mathrm{TOL}$}
    \State \textbf{break}
  \EndIf
\EndFor
\end{algorithmic}
\end{algorithm}

\subsection{Dose functional and discrete dose notation}
\label{subsec:dose_def}

The transport solver produces an approximation of the phase-space
density $\psi(\vec x,E,\vec\omega)$. In the numerical experiments we
primarily report dose-like outputs obtained by integrating $\psi$ against
an energy-deposition weight. Concretely, for a given stopping power
$S(E)$ we define the (unnormalised) dose functional
\begin{equation}
  \label{eq:dose_def}
  \mathscr D(\vec x)
  :=
  \int_{I}\int_{\mathcal A} S(E) \psi(\vec x,E,\vec\omega) \d\vec\omega \d E,
\end{equation}
where $\mathcal A=\mathbb S^{d-1}$ in the full angular model and
$\mathcal A=\mathcal C(\vec\omega_\star,\theta_c)$ when a cone is used.
(Any constant factors converting energy deposition to physical dose in
Gy are absorbed into the overall normalisation in these tests.)

For the angularly discrete model with quadrature nodes
$\{(\vec\omega_i,\varpi_i)\}_{i=1}^Q\subset\mathcal A\times(0,\infty)$, let
$\Psi_i(\vec x,E)$ denote the computed directional solution at
$\vec\omega_i$. The corresponding discrete dose is defined by replacing
the angular integral in \eqref{eq:dose_def} by quadrature and evaluating
the remaining energy integral numerically on the chosen energy grid:
\begin{equation}
  \label{eq:dose_discrete}
  \mathscr D^{Q}(\vec x)
  :=
  \sum_{k=1}^{N_E} w_k \sum_{i=1}^Q \varpi_i  S(E_k) \Psi_i(\vec x,E_k),
\end{equation}
where $\{E_k,w_k\}_{k=1}^{N_E}$ are the energy nodes and weights used in
the implementation. When a reference solution is available analytically,
we compute the corresponding reference dose $\mathscr D$ from \eqref{eq:dose_def}
using the same energy quadrature to isolate the spatial discretisation
error.

\subsection{Experiment 1: Benchmarking against an exact transport solution}
\label{exp1:benchmark}

We first consider a degenerate limit in which scattering is absent in
practice because the angular redistribution collapses to the identity. We
model this by the distributional kernel
\begin{equation}
  \label{eq:dirac_scattering}
  \sigma_{\mathrm{el}}(\vec{\omega},\vec{\omega}',E)
  =
  \Sigma_{\mathrm{el}}(E) \delta(\vec{\omega}-\vec{\omega}'),
\end{equation}
so that
$\mathcal{K}_{\mathrm{el}}[\psi](\vec{x},E,\vec{\omega})
=\Sigma_{\mathrm{el}}(E) \psi(\vec{x},E,\vec{\omega})$. Choosing the
removal to match, $\sigma_T(E)=\Sigma_{\mathrm{el}}(E)$, the gain term
cancels the loss term and \eqref{eq:BTE} reduces to the pure
transport--slowing-down equation
\[
  \vec{\omega}\cdot\nabla_{\vec{x}}\psi(\vec{x},E,\vec{\omega})
  -
  \partial_E\qp{S(E)\psi(\vec{x},E,\vec{\omega})}
  =
  0,
\]
posed with the inflow condition $\psi=g$ on $\Gamma_-$. This case admits
an explicit characteristic solution and therefore provides a stringent
code-level benchmark for the method-of-characteristics sweep, including
correct localisation along $\vec{\omega}$ and correct propagation in
energy.

In the Bragg--Kleeman setting $S(E)=\frac{1}{\alpha p}E^{1-p}$, an exact
solution is
\begin{equation}
  \label{eq:exact_dirac_solution}
  \psi(\vec{x},E,\vec{\omega})
  =
  \qp{
    E^p + \frac{\vec{\omega}\cdot(\vec{x}-\vec{x}_0)}{\alpha}
  }^{\frac{1-p}{p}}
  E^{p-1} 
  g\qp{
    \qp{
      E^p + \frac{\vec{\omega}\cdot(\vec{x}-\vec{x}_0)}{\alpha}
    }^{\frac{1}{p}},
    \vec{x}_0,\vec{\omega}
  },
\end{equation}
where $\vec{x}_0=\vec{x}_0(\vec{x},E,\vec{\omega})\in\partial D$ denotes
the point where the characteristic through $(\vec{x},E)$ in direction
$\vec{\omega}$ meets the spatial inflow boundary (equivalently, the
preimage under the characteristic map).

We compare the numerical approximation produced by the MoC sweep
against the exact benchmark \eqref{eq:exact_dirac_solution} through
the induced dose field
\eqref{eq:dose_def}. Figure~\ref{fig:benchmark_error_distribution}
displays the pointwise relative dose error
$\qp{\mathscr D^{Q}-\mathscr D}/\Norm{\mathscr D}_{\leb{\infty}}$, and
Figure~\ref{fig:benchmark_slice_plot} shows the corresponding computed
dose field $\mathscr D^{Q}$ for this ballistic transport case.

\begin{figure}[h!]
  \centering
  \begin{subfigure}{.45\linewidth}
    \includegraphics[width=\textwidth]{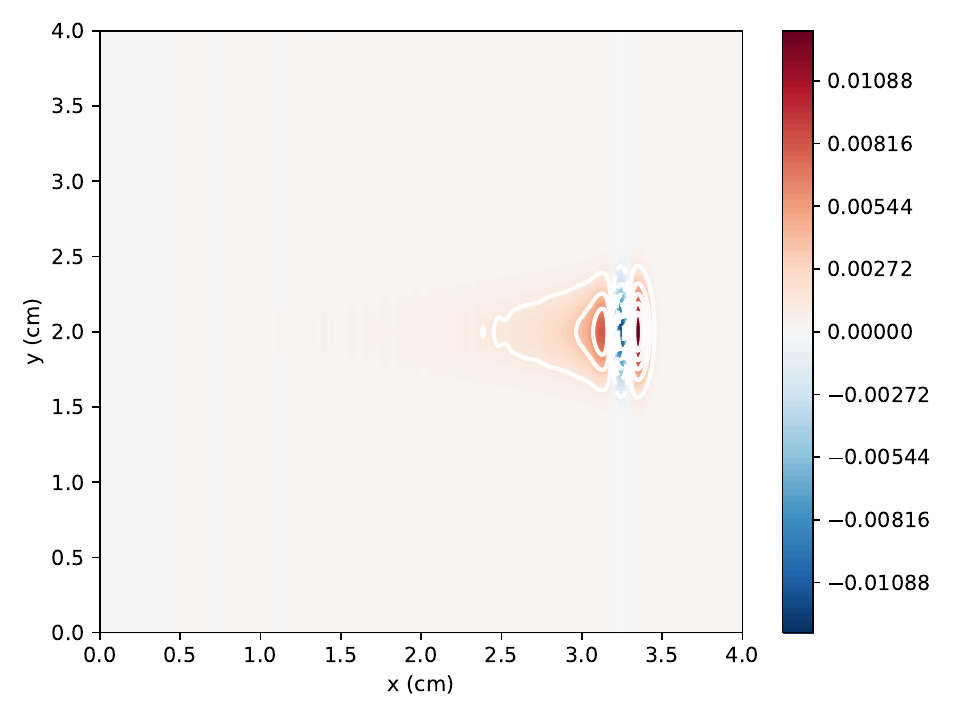}
    \caption{Pointwise relative error for the ballistic benchmark.}
    \label{fig:benchmark_error_distribution}
  \end{subfigure}
  \hfil
  \begin{subfigure}{.45\linewidth}
    \includegraphics[width=\textwidth]{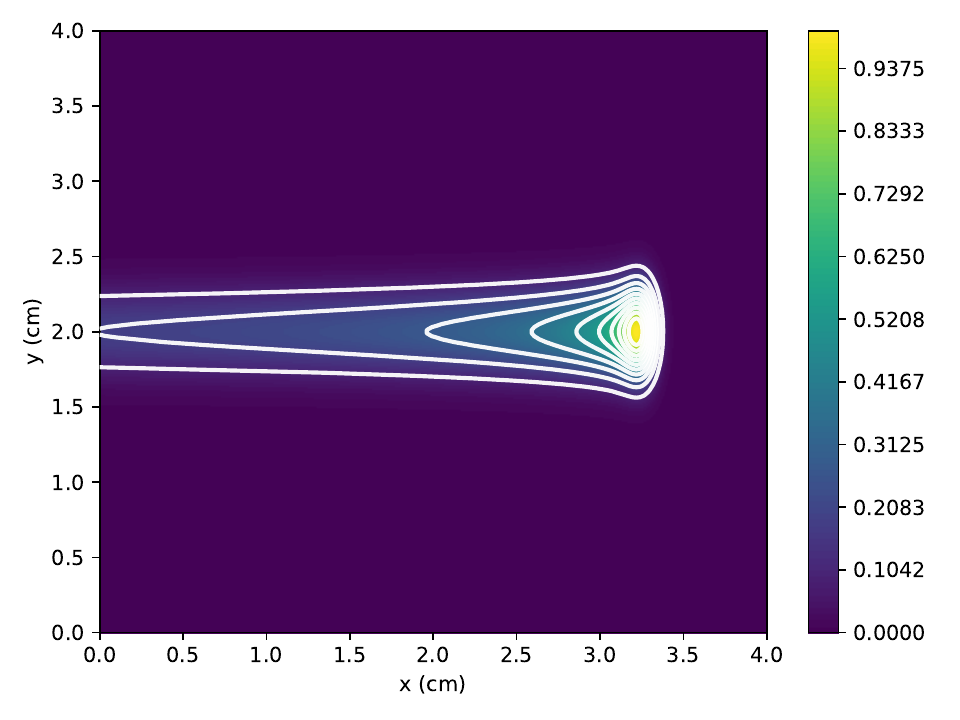}
    \caption{Computed dose field corresponding to the ballistic benchmark solution.}
    \label{fig:benchmark_slice_plot}
  \end{subfigure}
  \caption{Ballistic (degenerate scattering) benchmark. Left: pointwise relative
    error of the numerical transport solution against the exact characteristic
    solution \eqref{eq:exact_dirac_solution}. Right: computed dose field for the
    same test case.}
\end{figure}

To quantify convergence under mesh refinement, we report the maximum
pointwise dose error normalised by the peak reference dose,
\[
  \frac{\|\mathscr D^{Q}-\mathscr D\|_{L^\infty(D)}}{\|\mathscr D\|_{L^\infty(D)}}
   =
  \frac{\max_{\vec{x}\in D}|\mathscr D^{Q}(\vec{x})-\mathscr D(\vec{x})|}{\max_{\vec{x}\in D}\mathscr D(\vec{x})}.
\]
and plot it against the number of spatial cells $N_x$ on a log--log
scale. Figure~\ref{fig:benchmark_convergence_rate} also shows an
empirical power-law fit obtained by linear regression in logarithmic
coordinates.

\begin{figure}[h!]
  \centering
  \begin{subfigure}{.5\linewidth}
  \begin{tikzpicture}[scale=0.93]
  \pgfplotstableread[col sep=comma]{experiment1output/benchmark_grid_study.csv}\datatable

  \pgfplotstablecreatecol[
    create col/expr={\thisrow{Nx}*\thisrow{Ny}*\thisrow{Ne}}
  ]{N}{\datatable}

  \pgfplotstablecreatecol[
    create col/expr={ln(\thisrow{N})}
  ]{lnN}{\datatable}
  \pgfplotstablecreatecol[
    create col/expr={ln(\thisrow{max_rel_to_peak})}
  ]{lny}{\datatable}

  \pgfplotstablecreatecol[
    linear regression={x=lnN,y=lny}
  ]{linreg}{\datatable}

  \pgfmathsetmacro{\rate}{-\pgfplotstableregressiona}
  \pgfmathsetmacro{\pref}{exp(\pgfplotstableregressionb)}

  \begin{axis}[
      width=\linewidth,
      xmode=log,
      ymode=log,
      grid=both,
      major grid style={black!50},
      xlabel={$N$},
      ylabel={$\|\mathscr D^Q-\mathscr D\|_{L^\infty(D)}/\|\mathscr D\|_{L^\infty(D)}$},
      legend style={at={(0.02,0.02)},anchor=south west},
  ]

    \addplot[
      solid,
      mark=square*,
      mark options={scale=1, solid},
      color={black!100},
      line width=1.0
    ]
    table[x=N, y=max_rel_to_peak]{\datatable};
    \addlegendentry{measured error}

    \addplot[
      dashed,
      color={black!100},
      line width=1.0,
      domain=32000:32000000,
      samples=200
    ]{\pref * x^(-\rate)};
    \addlegendentry{$\mathcal{O}(N^{-\pgfmathprintnumber[fixed,precision=2]{\rate}})$}

  \end{axis}
  \end{tikzpicture}
  \end{subfigure}
  \caption{\label{fig:benchmark_convergence_rate} Convergence of the
    ballistic benchmark under spatial mesh refinement, measured by the
    maximum pointwise dose error normalised by the peak reference
    dose, $\|\mathscr D^Q-\mathscr D\|_{L^\infty(D)}/\|\mathscr
    D\|_{L^\infty(D)}$, plotted against $N=N_xN_yN_E$ on a log--log
    scale. The dashed line shows the empirical power-law fit.}
\end{figure}
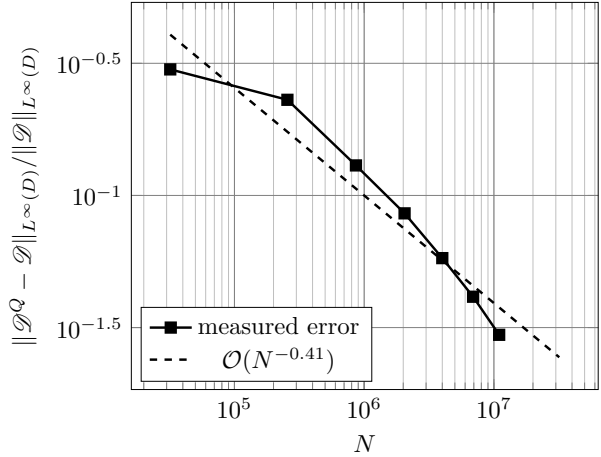

\subsection{Experiment 2: Convergence of source iteration}
\label{exp2:SI}

We next examine convergence of the source-iteration scheme for the
angularly discrete transport system on a fixed discretisation. We use
the finest spatial and energy grids from
Experiment~\ref{exp1:benchmark} and the same angular rule as in the
subsequent scattering runs. Fix an angular rule with $Q$
directions. Let
$\vec{\psi}^{ (n)}=(\psi^{(n)}_1,\dots,\psi^{(n)}_Q)\in U^Q$ denote
the $n$th iterate produced by source iteration, where each component
$\psi^{(n)}_i(\vec{x},E)\approx\psi(\vec{x},E,\vec{\omega}_i)$ is
computed by a single method-of-characteristics sweep in direction
$\vec{\omega}_i$ with a right-hand side assembled from the previous
iterate.

To quantify convergence, we monitor the maximum successive-iterate
difference over all directions and phase-space gridpoints,
\begin{equation}
  \label{eq:SI_diff_inf}
  \Delta_\infty^{(n)}
  :=
  \max_{1\le i\le Q} 
  \|\psi^{(n)}_i-\psi^{(n-1)}_i\|_{L^\infty(D\times I)}.
\end{equation}
This diagnostic is inexpensive to compute, is directly available from
the stored iterates and provides a practical stopping proxy. In
particular, when the discrete source iteration is contractive in the
product graph norm, the a~posteriori bound \eqref{eq:iter-apost}
implies that small successive differences
$\|\vec{\psi}^{ (n)}-\vec{\psi}^{ (n-1)}\|_{\mathcal U^Q}$ control
the distance to the converged discrete-ordinates solution. While
$\Delta_\infty^{(n)}$ is not the same norm, in practice it tracks the
same geometric decay and is therefore used here as a convenient
stopping indicator.

Figure~\ref{fig:algo_convergence} plots $\Delta_\infty^{(n)}$ against
the iteration count $n$ on a logarithmic scale. For comparison we also
plot a geometric reference curve of the form $C r^{ n-1}$ with
$C=\Delta_\infty^{(1)}$ and a representative contraction factor
$r\in(0,1)$.  When the source iteration is contractive, one expects
$\Delta_\infty^{(n)}$ to decay approximately geometrically until the
iteration reaches the level of discretisation and floating-point
errors.

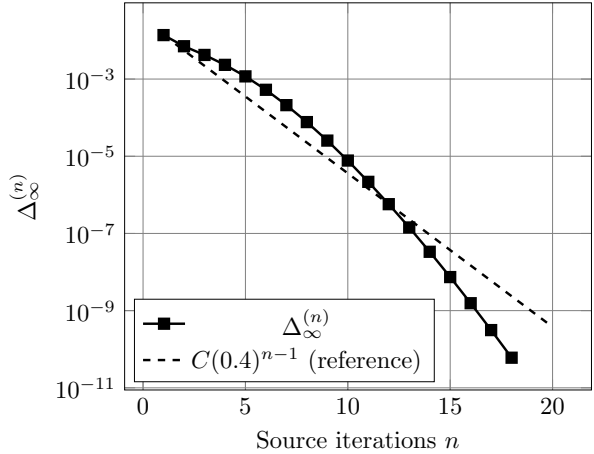
\begin{figure}[h!]
  \centering
  \begin{subfigure}{.5\linewidth}
  \begin{tikzpicture}[scale=0.93]

    \pgfplotstableread[col sep=comma]{experiment1output/benchmark_finest_iter_history.csv}\datatable

    \pgfmathsetmacro{\rref}{0.4}

    \pgfplotstablegetelem{0}{diff_inf}\of{\datatable}
    \pgfmathsetmacro{\Cref}{\pgfplotsretval}

    \begin{axis}[
      width=\linewidth,
      ymode=log,
      grid=both,
      major grid style={black!50},
      xlabel = {Source iterations $n$},
      ylabel = {$\Delta_\infty^{(n)}$},
      legend style={at={(0.02,0.02)},anchor=south west},
    ]

      \addplot[
        solid,
        mark=square*,
        mark options={scale=1, solid},
        color={black!100},
        line width=1.0
      ]
      table[x=iter, y=diff_inf]{\datatable};
      \addlegendentry{$\Delta_\infty^{(n)}$}

      \pgfmathprintnumberto[fixed,precision=2]{\rref}{\rreflegend}
      \addplot[
        dashed,
        color={black!100},
        line width=1.0,
        domain=1:20,
        samples=200
      ]{\Cref * (\rref)^(x-1)};
      \addlegendentry{$C (\rreflegend)^{ n-1}$ (reference)}

    \end{axis}
  \end{tikzpicture}
  \end{subfigure}
  \caption{\label{fig:algo_convergence} Convergence history of source
    iteration on the finest discretisation, measured by the
    successive-iterate difference $\Delta_\infty^{(n)}=\max_{1\le i\le
      Q}\|\psi^{Q,n}_i-\psi^{Q,n-1}_i\|_{L^\infty(D\times I)}$. The
    dashed curve is a geometric reference line $C r^{ n-1}$ with
    $C=\Delta_\infty^{(1)}$ and a representative $r\in(0,1)$.}
\end{figure}

\subsection{Experiment 3: Simulations with a Henyey--Greenstein cross section}
\label{exp3:HG}

To model anisotropic (forward-peaked) angular scattering we use the
Henyey--Greenstein phase function on $\mathbb S^{1}$,
\begin{equation}
  \label{eq:HG_phase}
  p_{\gamma}(\theta)
  =
  \frac{1}{2\pi}\frac{1-\gamma^2}{1+\gamma^2-2\gamma\cos\theta},
  \qquad \theta\in[-\pi,\pi),
\end{equation}
which satisfies the normalisation $\int_{-\pi}^{\pi} p_{\gamma}(\theta) \d\theta = 1$.
For $\gamma\in(0,1)$ the distribution is strongly forward-peaked, whereas
$\gamma<0$ corresponds to backscattering.

In these experiments we work in $d=2$, so directions are parametrised by
an angle $\theta$ via $\vec\omega(\theta)=(\cos\theta,\sin\theta)$.
We take an energy-independent, rotationally invariant elastic kernel of
the form
\[
  \sigma_{\mathrm{el}}\!\big(\vec\omega(\theta),\vec\omega(\theta')\big)
  =
  \Sigma_{\mathrm{el}} p_{\gamma}(\theta-\theta'),
\]
so that the corresponding gain is the circular convolution
\[
  \mathcal K_{\mathrm{el}}[\psi](\vec x,E,\theta)
  =
  \Sigma_{\mathrm{el}}\int_{-\pi}^{\pi} p_{\gamma}(\theta-\theta') 
  \psi(\vec x,E,\theta') \d\theta'.
\]
This kernel is isotropic about the current direction in the sense that it
depends only on the relative turning angle $\theta-\theta'$.

Figure~\ref{fig:hg_phase} shows $p_{\gamma}$ for $\gamma=0.9$ together
with its piecewise-constant approximation on the angular bins used in
the computation. Concretely, if $\{B_j\}_{j=1}^Q$ is a partition of
$[-\pi,\pi)$ with bin measures $\mu_j:=|B_j|$ and midpoints
$\theta_j\in B_j$, then the bar height in bin $B_j$ is the bin average
$\bar p_j := \mu_j^{-1}\int_{B_j} p_{\gamma}(\theta) \d\theta$, so that
$p_{\gamma}\approx \sum_{j=1}^Q \bar p_j \mathbf 1_{B_j}$ and
$\sum_{j=1}^Q \mu_j \bar p_j = 1$.

\begin{figure}[h!]
  \centering
  \includegraphics[width=0.55\linewidth]{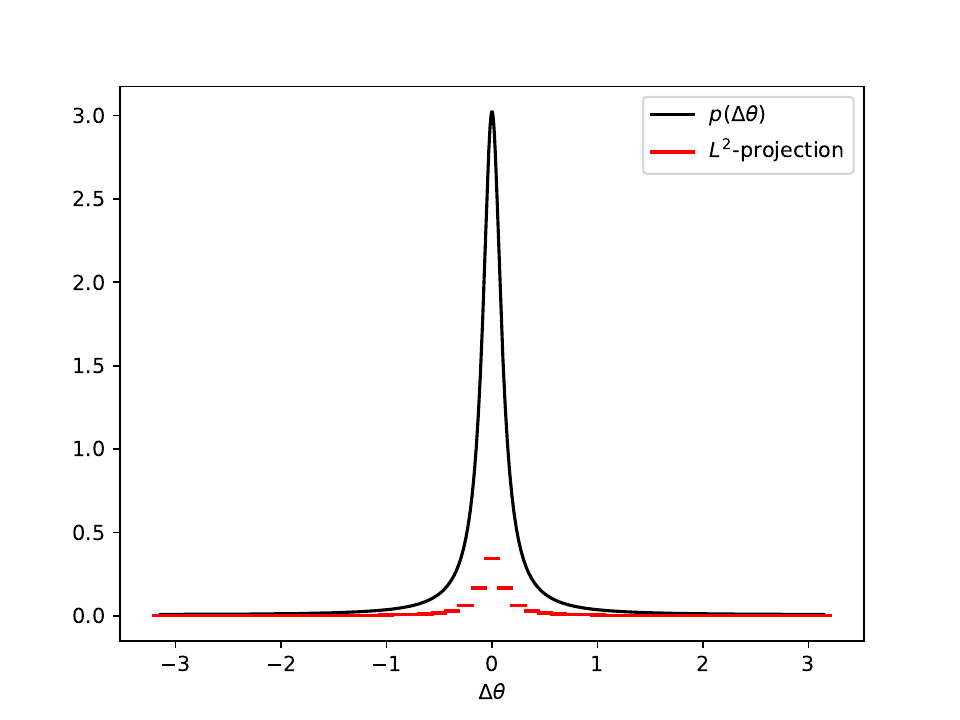}
  \caption{\label{fig:hg_phase}
  Henyey--Greenstein phase function \eqref{eq:HG_phase} on $\mathbb S^{1}$
  with anisotropy parameter $\gamma=0.9$. The red bars show the bin
  averages $\bar p_j$ on the piecewise-constant angular partition used to
  assemble the discrete scattering operator.}
\end{figure}

After convergence of source iteration (Experiment~\ref{exp2:SI}), we
obtain the discrete-ordinates solution $\vec\psi^{ Q}$ and compute
the associated dose field $\mathscr D^{Q}$ (as defined in the dose
subsection preceding Experiment~\ref{exp1:benchmark}). To highlight
the effect of angular scattering, Figure~\ref{fig:hg_gamma_sweep}
places a number of choices of Henyey--Greenstein parameters. The dose
exhibits increased lateral spread for smaller $\gamma$, reflecting
redistribution of fluence away from purely characteristic transport
directions.

\begin{figure}[h!]
  \centering

  \begin{subfigure}{.48\linewidth}
    \centering
    \includegraphics[width=\linewidth]{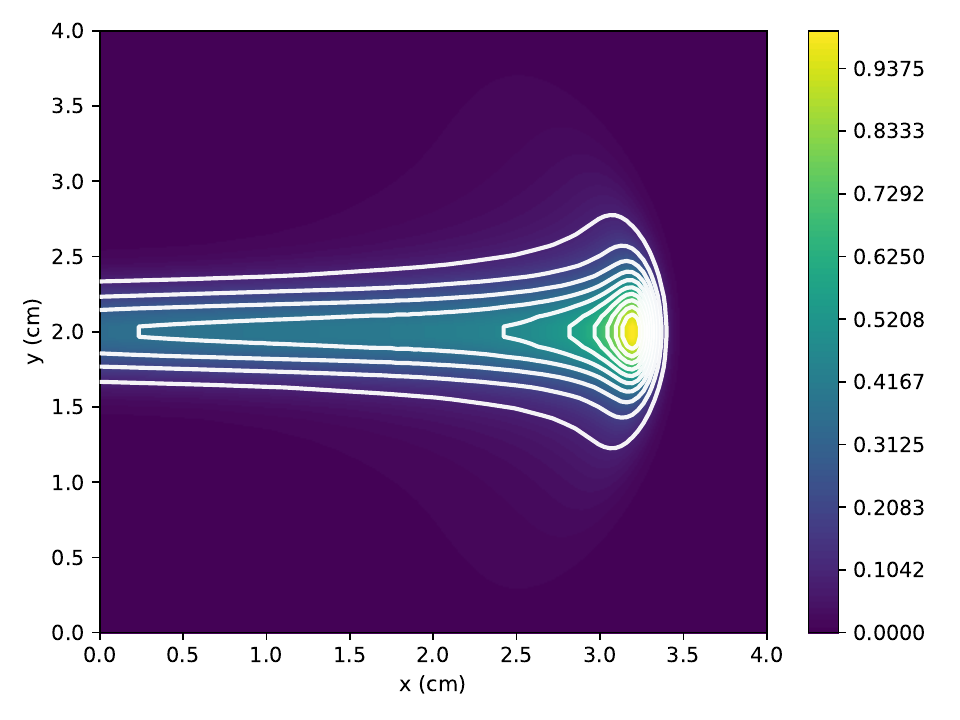}
    \caption{Henyey--Greenstein scattering, $\gamma=0.95$.}
    \label{fig:hg_dose_095}
  \end{subfigure}
  \hfil
  \begin{subfigure}{.48\linewidth}
    \centering
    \includegraphics[width=\linewidth]{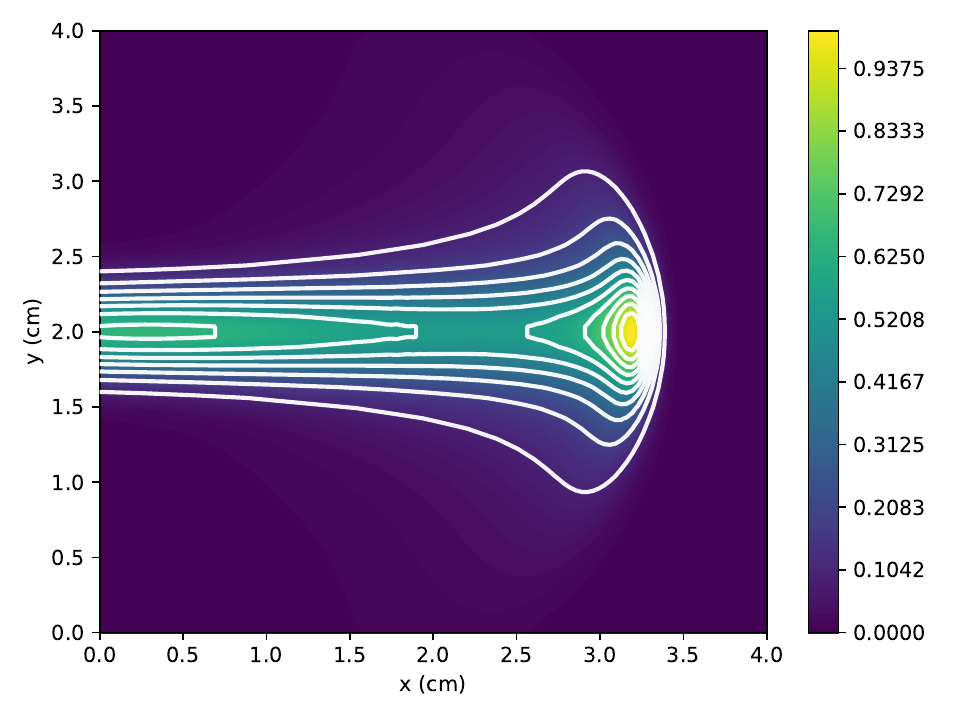}
    \caption{Henyey--Greenstein scattering, $\gamma=0.90$.}
    \label{fig:hg_dose_090}
  \end{subfigure}

  \vspace{0.6em}

  \begin{subfigure}{.48\linewidth}
    \centering
    \includegraphics[width=\linewidth]{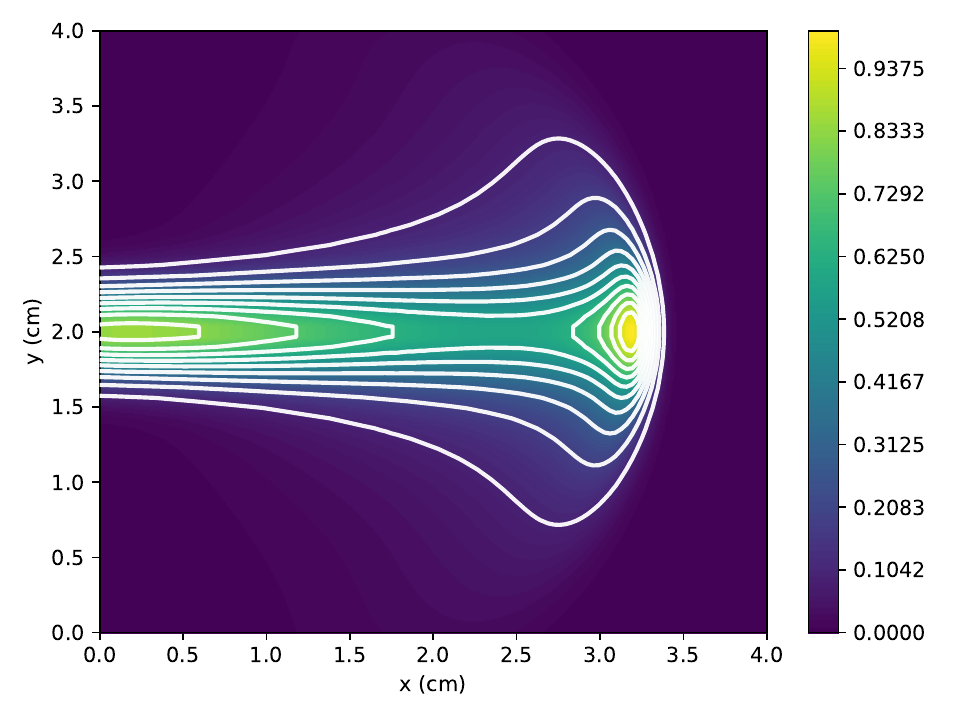}
    \caption{Henyey--Greenstein scattering, $\gamma=0.85$.}
    \label{fig:hg_dose_085}
  \end{subfigure}
  \hfil
  \begin{subfigure}{.48\linewidth}
    \centering
    \includegraphics[width=\linewidth]{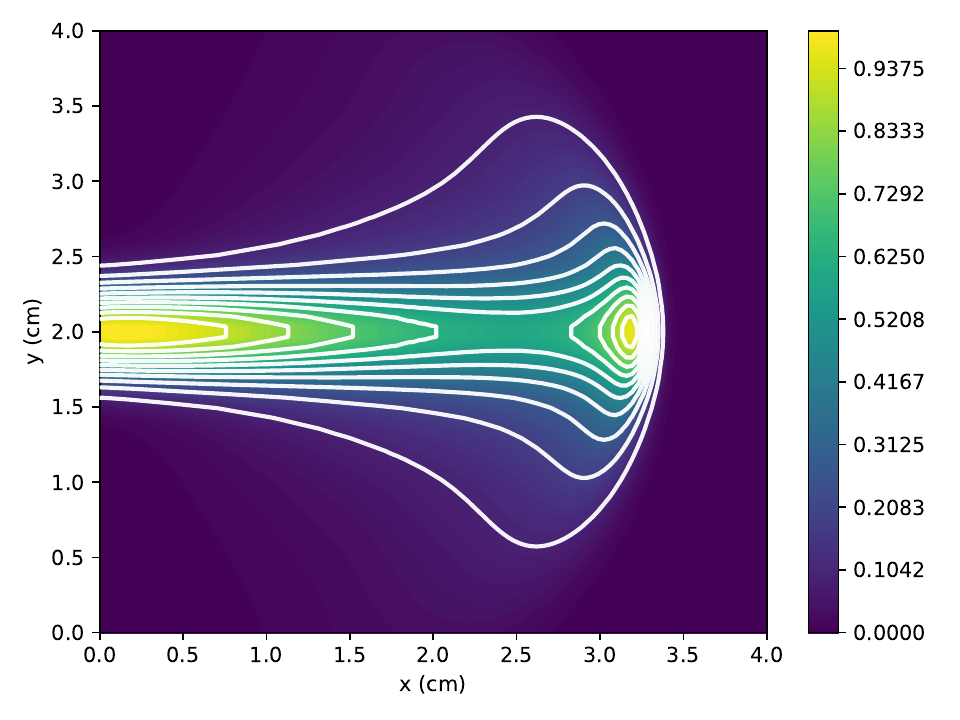}
    \caption{Henyey--Greenstein scattering, $\gamma=0.80$.}
    \label{fig:hg_dose_080}
  \end{subfigure}

  \caption{\label{fig:hg_gamma_sweep}
  Dose fields $D^{Q}$ computed with Henyey--Greenstein scattering
  \eqref{eq:HG_phase} for $\gamma\in\{0.95,0.90,0.85,0.80\}$. Decreasing
  $\gamma$ induces visible lateral broadening of the beam relative to
  the more forward-peaked cases.}
\end{figure}

\subsection{Experiment 4: Angular discretisation study (ordinates and cone truncation)}
\label{exp4:angular_study}

This experiment quantifies the effect of the angular approximation on
the computed proton dose field. In the notation of
Section~\ref{sec:angular}, the angular approximation has two distinct
components: the number of discrete ordinates $Q$ (angular resolution)
and, when the angular domain is restricted, the cone half-angle
$\theta_c$ (truncation). We therefore study both effects within a single
test configuration, with all other discretisation parameters held fixed.

We fix the spatial mesh, the energy grid and the stopping power $S(E)$
as in the preceding experiments. We model elastic scattering with the
Henyey--Greenstein phase function \eqref{eq:HG_phase} at a
representative forward-peaked parameter $\gamma=0.9$. For each angular
discretisation, we run source iteration to a sufficiently tight
tolerance so that the residual effect of iteration is negligible
compared to angular discretisation effects (this is validated a
posteriori using the successive-iterate diagnostic from
Experiment~\ref{exp2:SI}).

To isolate the influence of $Q$ and $\theta_c$ we compare against a
numerically converged angular reference. Concretely, we compute a
full-sphere discrete-ordinates reference solution
$\vec\psi^{ Q_{\mathrm{ref}}}$ with a large number of directions
$Q_{\mathrm{ref}}$, and define the corresponding reference dose
$D^{Q_{\mathrm{ref}}}$. For cone-restricted runs, we also consider a
cone-reference at the same $\theta_c$ but with a large $Q$, denoted
$D^{Q_{\mathrm{ref}},\theta_c}$, to distinguish cone truncation
effects from pure quadrature resolution effects.

For any run with $Q$ directions and cone half-angle $\theta_c$ we
compute the dose field $D^{Q,\theta_c}$ (as defined in the dose
subsection preceding Experiment~\ref{exp1:benchmark}) and report the
relative dose error against the chosen reference
\[
  \mathcal{E}_\infty(Q,\theta_c)
  :=
  \frac{\|D^{Q,\theta_c}-D^{\mathrm{ref}}\|_{L^\infty(D)}}{\|D^{\mathrm{ref}}\|_{L^\infty(D)}},
  \qquad
  D^{\mathrm{ref}}\in\{D^{Q_{\mathrm{ref}}}, D^{Q_{\mathrm{ref}},\theta_c}\}.
\]
We additionally report a simple lateral-spread diagnostic at a fixed
depth $x=x_\dagger$ (chosen in the mid-range of the track), defined as a
second-moment beam width
\[
  W^{Q,\theta_c}(x_\dagger)
  :=
  \Bigg(
  \frac{\int_{y} (y-\bar y)^2 D^{Q,\theta_c}(x_\dagger,y) \d y}
       {\int_{y} D^{Q,\theta_c}(x_\dagger,y) \d y}
  \Bigg)^{1/2},
  \qquad
  \bar y
  :=
  \frac{\int_y y D^{Q,\theta_c}(x_\dagger,y) \d y}{\int_y D^{Q,\theta_c}(x_\dagger,y) \d y},
\]
which is stable under mesh refinement and directly reflects the
scattering-induced lateral broadening.

To separate cone truncation effects from angular-resolution effects we
use two distinct angular references.  First, we compute a
high-resolution maximal-cone reference dose
$D^{Q_{\mathrm{ref}},\theta_{\max}}$, where $\theta_{\max}$ denotes
the maximal cone used in the implementation (in the present 2D setting
$\theta_{\max}=\pi/2$, that is, all forward directions). This
reference serves as a proxy for the untruncated forward-direction
model.

Second, for each cone half-angle $\theta_c$ in the sweep we compute a
high-resolution cone reference $D^{Q_{\mathrm{ref}},\theta_c}$ on the
same truncated angular domain. Comparing against
$D^{Q_{\mathrm{ref}},\theta_c}$ therefore isolates the
angular-resolution error inside the cone, since both solutions neglect
out-of-cone scattering in exactly the same way.

Two parameter sweeps are performed.

\begin{enumerate}
\item
  Ordinates study. We fix the cone half-angle at its maximal value used in the
  implementation, denoted $\theta_{\max}$. In the present 2D setting this
  corresponds to $\theta_{\max}=\pi/2$, that is, all forward directions.
  We then vary the number of discrete ordinates $Q$ and compare the resulting
  dose fields $D^{Q,\theta_{\max}}$ against the maximal-cone reference
  $D^{Q_{\mathrm{ref}},\theta_{\max}}$ (here $Q_{\mathrm{ref}}=257$). We report the
  relative peak-normalised dose error $\mathcal{E}_\infty(Q,\theta_{\max})$ and
  the beam-width diagnostic $W^{Q,\theta_{\max}}(x_\dagger)$ defined above.

\item
  Cone-truncation study. We fix the angular resolution at $Q=Q_\star$ (here
  $Q_\star=33$) and vary the cone half-angle
  $\theta_c\in\{\theta_{c,1},\dots,\theta_{\max}\}$. For each $\theta_c$ we report
  two errors: $\mathcal{E}_\infty(Q_\star,\theta_c)$ measured against the maximal-cone
  reference $D^{Q_{\mathrm{ref}},\theta_{\max}}$ (capturing the combined effect of
  cone truncation and angular resolution) and the corresponding error measured
  against the cone reference $D^{Q_{\mathrm{ref}},\theta_c}$ (isolating the
  angular-resolution effect inside the truncated cone), with $Q_{\mathrm{ref}}=129$
  for this sweep. Note that, with $Q_\star$ fixed, enlarging the cone simultaneously
  decreases truncation error and coarsens the angular partition, so the error against
  $D^{Q_{\mathrm{ref}},\theta_{\max}}$ reflects a trade-off between truncation and
  resolution and need not be monotone in $\theta_c$.
\end{enumerate}

Figure~\ref{fig:angular_error_summary} collects the two error curves, one for
ordinates refinement and one for cone truncation. Figure~\ref{fig:angular_width_study}
shows the stabilisation of the beam-width diagnostic under ordinates refinement.
Finally, Figure~\ref{fig:angular_cone_snapshots} provides representative dose
snapshots for four cone half-angles, illustrating the qualitative impact of
truncation.

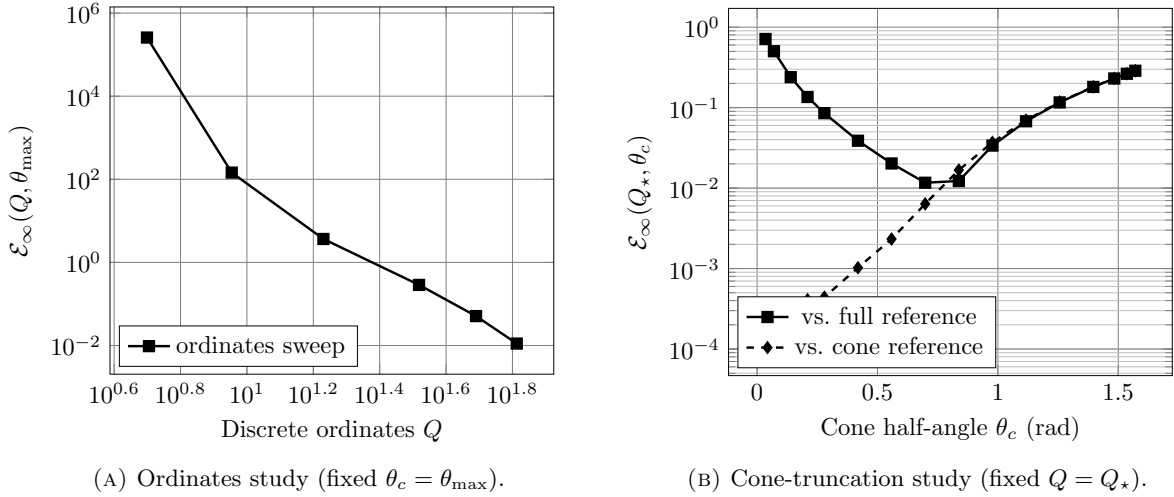
\begin{figure}[h!]
  \centering
  \begin{subfigure}{.48\linewidth}
  \begin{tikzpicture}[scale=0.93]

    \pgfplotstableread[col sep=comma]{experiment4output/angular_Q_study.csv}\datatableQ

    \begin{axis}[
      width=\linewidth,
      xmode=log,
      ymode=log,
      grid=both,
      major grid style={black!50},
      xlabel = {Discrete ordinates $Q$},
      ylabel = {$\mathcal{E}_\infty(Q,\theta_{\max})$},
      legend style={at={(0.02,0.02)},anchor=south west},
    ]

      \addplot[
        solid,
        mark=square*,
        mark options={scale=1, solid},
        color={black!100},
        line width=1.0
      ]
      table[x=Q, y=E_inf]{\datatableQ};
      \addlegendentry{ordinates sweep}

    \end{axis}
  \end{tikzpicture}
  \caption{Ordinates study (fixed $\theta_c=\theta_{\max}$).}
  \end{subfigure}
  \hfil
  \begin{subfigure}{.48\linewidth}
  \begin{tikzpicture}[scale=0.93]

    \pgfplotstableread[col sep=comma]{experiment4output/angular_cone_study.csv}\datatableC

    \begin{axis}[
      width=\linewidth,
      ymode=log,
      grid=both,
      major grid style={black!50},
      xlabel = {Cone half-angle $\theta_c$ (rad)},
      ylabel = {$\mathcal{E}_\infty(Q_\star,\theta_c)$},
      legend style={at={(0.02,0.02)},anchor=south west},
    ]

      \addplot[
        solid,
        mark=square*,
        mark options={scale=1, solid},
        color={black!100},
        line width=1.0
      ]
      table[x=theta_c, y=E_inf_fullref]{\datatableC};
      \addlegendentry{vs.\ full reference}

      \addplot[
        dashed,
        mark=diamond*,
        mark options={scale=1, solid},
        color={black!100},
        line width=1.0
      ]
      table[x=theta_c, y=E_inf_coneref]{\datatableC};
      \addlegendentry{vs.\ cone reference}

    \end{axis}
  \end{tikzpicture}
  \caption{Cone-truncation study (fixed $Q=Q_\star$).}
  \end{subfigure}
  \caption{\label{fig:angular_error_summary} Angular discretisation
    error study with Henyey--Greenstein scattering at $\gamma=0.9$.
    Left: convergence of the dose error under ordinates refinement at
    fixed $\theta_c=\theta_{\max}$.  Right: error versus cone
    half-angle at fixed $Q=Q_\star$. The curve shown against the
    maximal-cone reference captures both cone truncation and
    angular-resolution effects, while the curve shown against the cone
    reference isolates the angular-resolution effect inside the
    truncated cone.}
\end{figure}

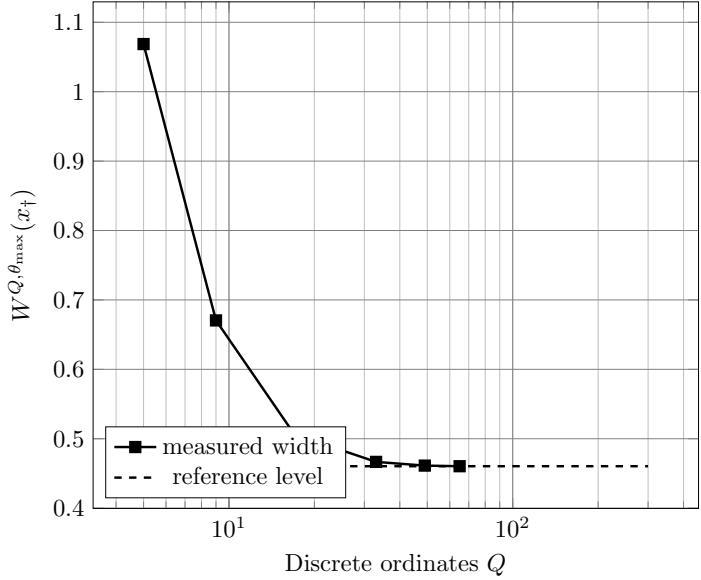
\begin{figure}[h!]
  \centering
  \begin{subfigure}{.62\linewidth}
  \begin{tikzpicture}[scale=0.93]

    \pgfplotstableread[col sep=comma]{experiment4output/angular_Q_study.csv}\datatableQ

    \pgfplotstablegetrowsof{\datatableQ}
    \pgfmathtruncatemacro{\lastrow}{\pgfplotsretval-1}
    \pgfplotstablegetelem{\lastrow}{W}\of{\datatableQ}
    \pgfmathsetmacro{\Wref}{\pgfplotsretval}

    \begin{axis}[
      width=\linewidth,
      xmode=log,
      grid=both,
      major grid style={black!50},
      xlabel = {Discrete ordinates $Q$},
      ylabel = {$W^{Q,\theta_{\max}}(x_\dagger)$},
      legend style={at={(0.02,0.02)},anchor=south west},
    ]

      \addplot[
        solid,
        mark=square*,
        mark options={scale=1, solid},
        color={black!100},
        line width=1.0
      ]
      table[x=Q, y=W]{\datatableQ};
      \addlegendentry{measured width}

      \addplot[
        dashed,
        color={black!100},
        line width=1.0,
        domain=5:300,
        samples=2
      ]{\Wref};
      \addlegendentry{reference level}

    \end{axis}
  \end{tikzpicture}
  \end{subfigure}
  \caption{\label{fig:angular_width_study} Beam-width diagnostic under
    ordinates refinement (Henyey--Greenstein, $\gamma=0.9$), shown at
    the fixed depth $x=x_\dagger$. The dashed line indicates the value
    obtained at the finest ordinates level in the sweep.}
\end{figure}

\begin{figure}[h!]
  \centering
  \begin{subfigure}{.48\linewidth}
    \includegraphics[width=\textwidth]{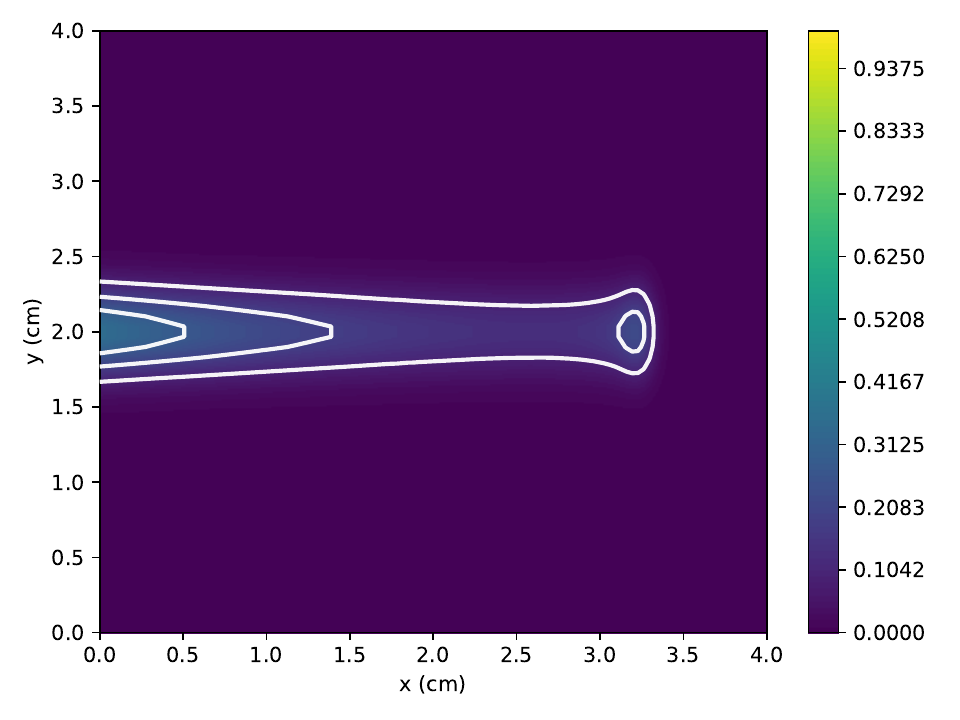}
    \caption{$\theta_c=0.0349$ rad ($2^\circ$).}
  \end{subfigure}
  \hfil
  \begin{subfigure}{.48\linewidth}
    \includegraphics[width=\textwidth]{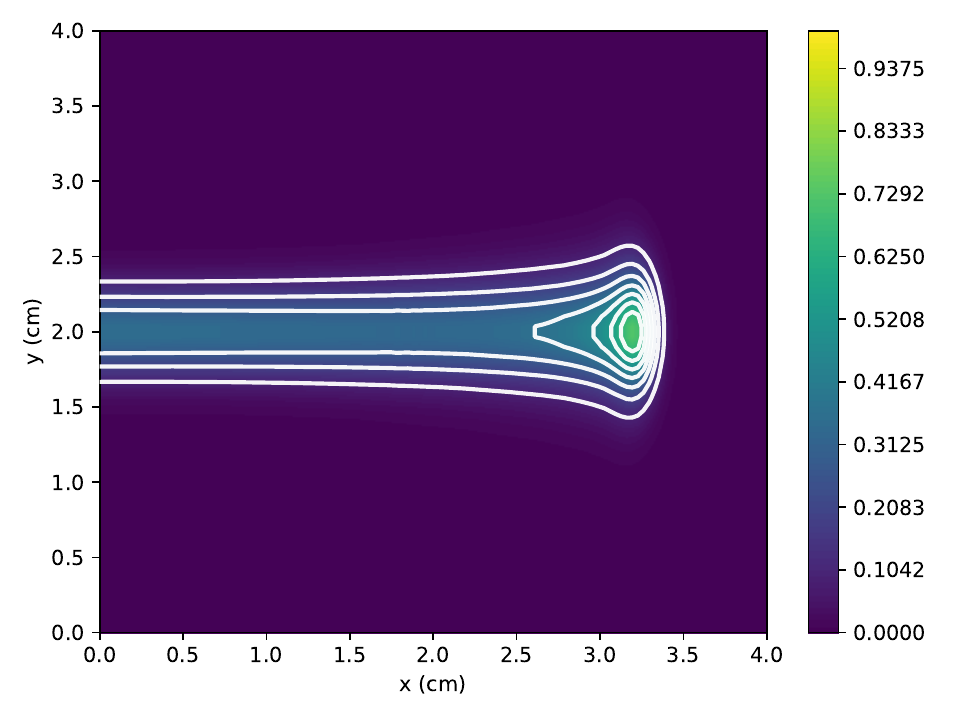}
    \caption{$\theta_c=0.2793$ rad ($16^\circ$).}
  \end{subfigure}

  \vspace{1ex}

  \begin{subfigure}{.48\linewidth}
    \includegraphics[width=\textwidth]{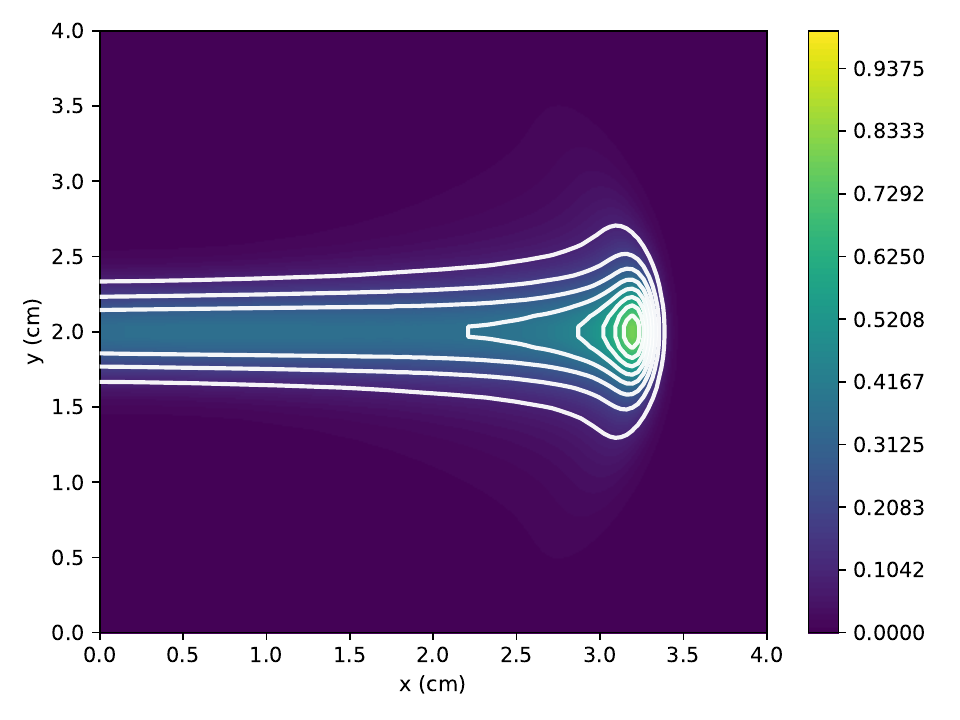}
    \caption{$\theta_c=0.8378$ rad ($48^\circ$).}
  \end{subfigure}
  \hfil
  \begin{subfigure}{.48\linewidth}
    \includegraphics[width=\textwidth]{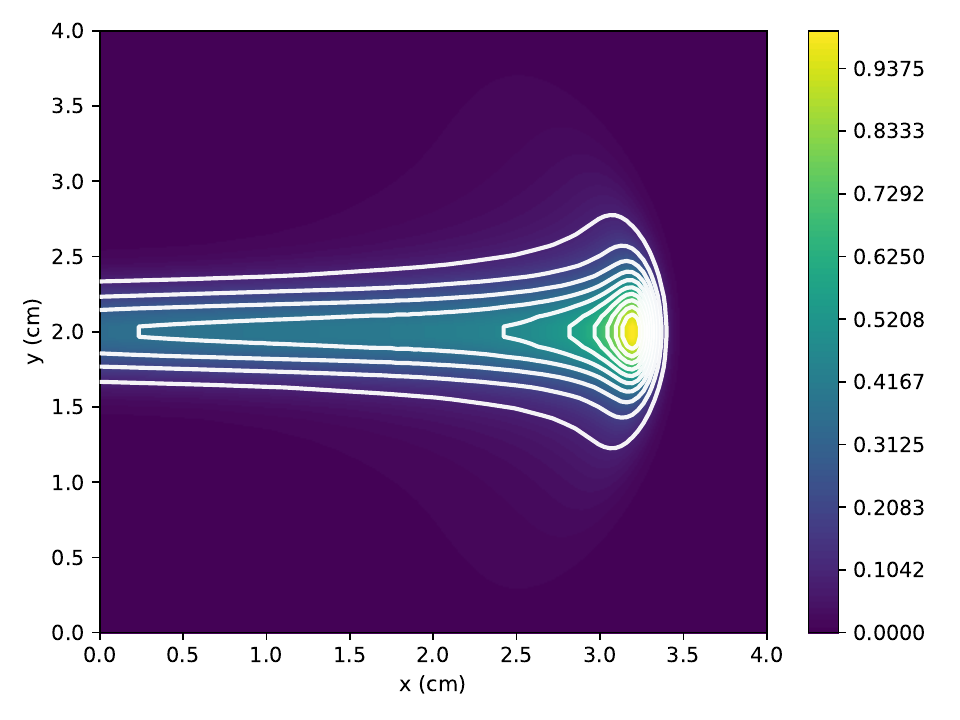}
    \caption{$\theta_c=1.5708$ rad ($90^\circ$).}
  \end{subfigure}
  \caption{\label{fig:angular_cone_snapshots} Representative dose
    fields $D^{Q_\star,\theta_c}$ from the cone-truncation sweep
    (Henyey--Greenstein, $\gamma=0.9$, fixed $Q=Q_\star$),
    illustrating how overly small cones under-represent wide-angle
    scattering and suppress lateral spread.}
\end{figure}

\subsection{Experiment 5: Coupling through scattering and iteration behaviour}
\label{exp5:coupling_iteration}

This experiment assesses the practical coupling strength induced by
elastic scattering in the discrete-ordinates system and its impact on
the convergence of source iteration. The analysis in
Section~\ref{subsec:SIerror} predicts linear convergence under a
subcriticality condition, with slower contraction as the effective
scattering strength increases. Here we verify this behaviour in a
physically meaningful regime.

We fix the spatial mesh, energy grid, stopping power
$S(E)$ and angular discretisation (either full sphere at a chosen $Q$ or
a cone with prescribed $\theta_c$) as in
Experiment~\ref{exp4:angular_study}. We consider a family of
Henyey--Greenstein parameters $\gamma\in(0,1)$, spanning moderately
anisotropic to strongly forward-peaked scattering. For each $\gamma$ we
run the source iteration scheme to a fixed tolerance $\mathrm{TOL}$ and
record both the successive-iterate diagnostic and the iteration count.

Let $\vec\psi^{ Q,n}=(\psi^{Q,n}_1,\dots,\psi^{Q,n}_Q)$
denote the $n$th iterate of the angularly discrete system, where each
$\psi^{Q,n}_i(\vec{x},E)$ is obtained by one method-of-characteristics
sweep in direction $\vec\omega_i$ with a right-hand side assembled from
$\vec\psi^{ Q,n-1}$. To quantify convergence we monitor the maximum
successive-iterate difference
\begin{equation}
  \label{eq:SI_diff_inf_exp5}
  \Delta_\infty^{(n)}
  :=
  \max_{1\le i\le Q} 
  \|\psi_i^{Q,n}-\psi_i^{Q,n-1}\|_{L^\infty(D\times I)}.
\end{equation}
This quantity is inexpensive to compute, is directly available from the
stored iterates and serves as a practical stopping proxy for the
a~posteriori bound in Section~\ref{subsec:SIerror}. We also report the
iteration count $n_{\mathrm{it}}(\gamma)$ required to achieve
$\Delta_\infty^{(n)}\le\mathrm{TOL}$.

We present two figures. Figure~\ref{fig:SI_gamma_histories} compares
the decay of $\Delta_\infty^{(n)}$ for several values of $\gamma$,
demonstrating that the iteration contracts more slowly as scattering
becomes more strongly forward-peaked and hence more redistributive
within the discretised angular
domain. Figure~\ref{fig:SI_gamma_counts} reports the iteration count
$n_{\mathrm{it}}(\gamma)$ as a function of $\gamma$.

\begin{figure}[h!]
  \centering
  \begin{subfigure}{.48\linewidth}
  \begin{tikzpicture}[scale=0.93]
    \pgfplotstableread[col sep=comma]{experiment5output/SI_gamma_counts.csv}\datatable
    \begin{axis}[
      width=\linewidth,
      grid=both,
      major grid style={black!50},
      xlabel={Henyey--Greenstein parameter $\gamma$},
      ylabel={Iterations to tolerance},
      legend style={at={(0.02,0.02)},anchor=south west},
    ]
      \addplot[
        solid,
        mark=square*,
        mark options={scale=1, solid},
        color={black!100},
        line width=1.0
      ]
      table[x=gamma, y=n_iter]{\datatable};
      \addlegendentry{$n_{\mathrm{iter}}$}
    \end{axis}
  \end{tikzpicture}
  \caption{Source-iteration counts vs.\ anisotropy parameter $\gamma$.}
  \end{subfigure}
  \hfil
  \begin{subfigure}{.48\linewidth}
  \begin{tikzpicture}[scale=0.93]
    \pgfplotstableread[col sep=comma]{experiment5output/SI_gamma_counts.csv}\datatable
    \begin{axis}[
      width=\linewidth,
      ymode=log,
      grid=both,
      major grid style={black!50},
      xlabel={Henyey--Greenstein parameter $\gamma$},
      ylabel={Final $\Delta_\infty^{(n)}$},
      legend style={at={(0.02,0.02)},anchor=south west},
    ]
      \addplot[
        solid,
        mark=square*,
        mark options={scale=1, solid},
        color={black!100},
        line width=1.0
      ]
      table[x=gamma, y=last_Delta_inf]{\datatable};
    \end{axis}
  \end{tikzpicture}
  \caption{Final successive-iterate diagnostic at termination.}
  \end{subfigure}
  \caption{\label{fig:SI_gamma_counts} Dependence of source-iteration
    performance on the scattering anisotropy parameter $\gamma$.}
\end{figure}
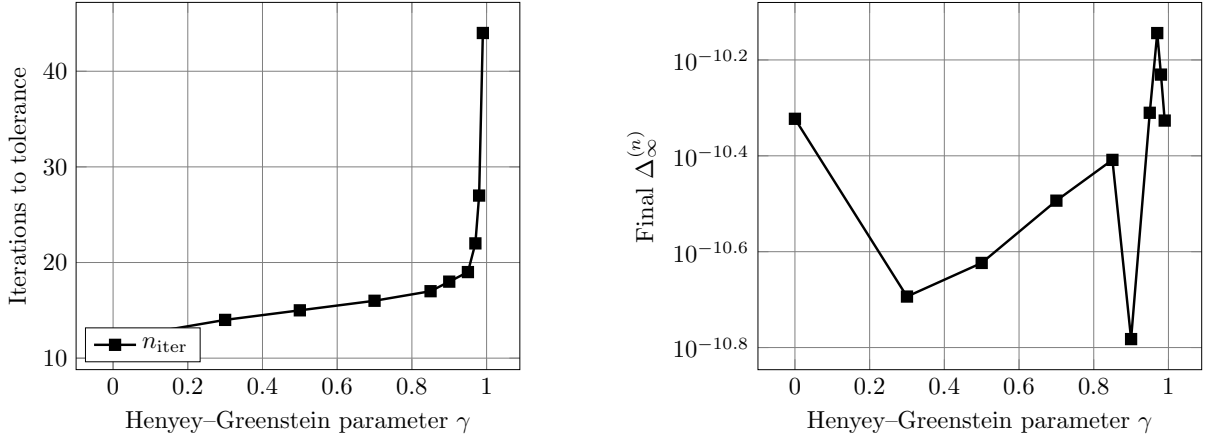

\begin{figure}[h!]
  \centering
  \begin{subfigure}{.8\linewidth}
  \begin{tikzpicture}[scale=0.93]
    \begin{axis}[
      width=\linewidth,
      ymode=log,
      grid=both,
      major grid style={black!50},
      xlabel={Source iterations $n$},
      ylabel={$\Delta_\infty^{(n)}$},
      legend style={at={(0.02,0.02)},anchor=south west},
    ]
      \pgfplotstableread[col sep=comma]{experiment5output/SI_gamma_history_g0p0000.csv}\dataA
      \addplot[
        solid,
        mark=square*,
        mark options={scale=0.9, solid},
        color={black!100},
        line width=1.0
      ] table[x=iter, y=Delta_inf]{\dataA};
      \addlegendentry{$\gamma=0.00$}

      \pgfplotstableread[col sep=comma]{experiment5output/SI_gamma_history_g0p7000.csv}\dataB
      \addplot[
        dashed,
        mark=triangle*,
        mark options={scale=0.9, solid},
        color={black!100},
        line width=1.0
      ] table[x=iter, y=Delta_inf]{\dataB};
      \addlegendentry{$\gamma=0.70$}

      \pgfplotstableread[col sep=comma]{experiment5output/SI_gamma_history_g0p9000.csv}\dataC
      \addplot[
        dotted,
        mark=*,
        mark options={scale=0.9, solid},
        color={black!100},
        line width=1.0
      ] table[x=iter, y=Delta_inf]{\dataC};
      \addlegendentry{$\gamma=0.90$}

      \pgfplotstableread[col sep=comma]{experiment5output/SI_gamma_history_g0p9900.csv}\dataD
      \addplot[
        dashdotted,
        mark=diamond*,
        mark options={scale=0.9, solid},
        color={black!100},
        line width=1.0
      ] table[x=iter, y=Delta_inf]{\dataD};
      \addlegendentry{$\gamma=0.99$}

    \end{axis}
  \end{tikzpicture}
  \end{subfigure}
  \caption{\label{fig:SI_gamma_histories} Convergence histories
    $\Delta_\infty^{(n)}=\max_{1\le i\le
      Q}\|\psi_i^{Q,n}-\psi_i^{Q,n-1}\|_{L^\infty(D\times I)}$ for
    representative Henyey--Greenstein parameters $\gamma$.}
\end{figure}
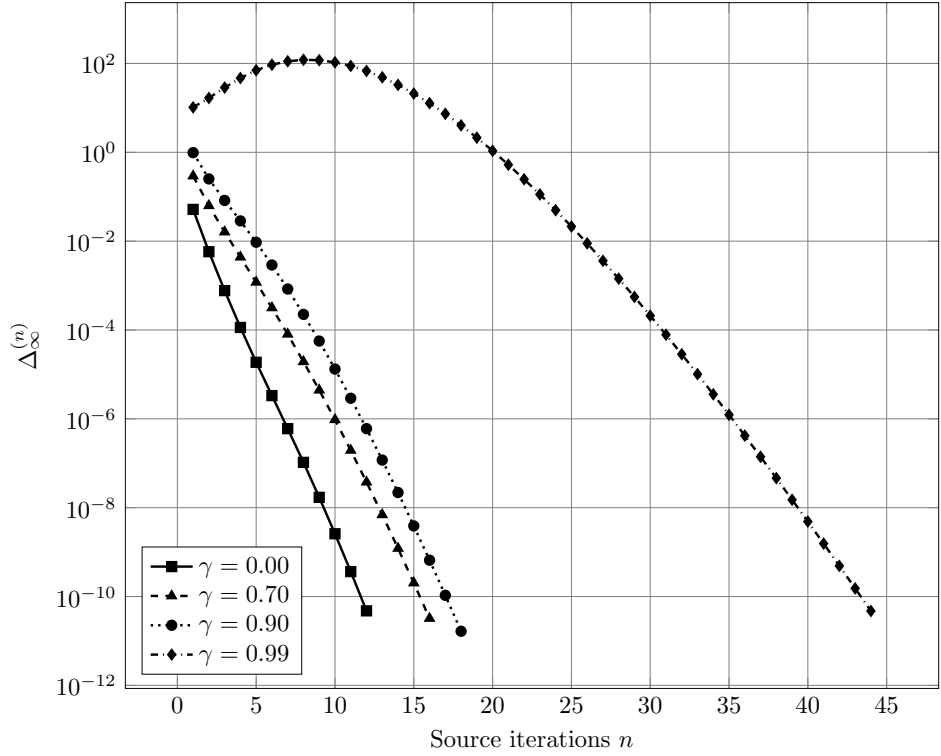

\section{Numerical Experiments: Carbon Ion Transport}
\label{sec:numericscarbon}

Unlike protons, carbon ions undergo nuclear interactions and
fragmentation in water, producing a broad spectrum of secondary
particles. The goal of this section is a demonstration of a
multi-species workflow built on the same characteristic sweeping
infrastructure as the proton solver. A scattered carbon primary field
is computed first, then secondary protons and neutrons are generated
from the converged carbon solution through prescribed volumetric
sources and transported in reduced models. Our emphasis is
out-of-field exposure, so we focus on neutrons (long-range transport
and relevance to protection quantities) and include secondary protons
as a simple charged-fragment proxy.

\subsection{Experiment 6: Out-of-field secondaries in carbon transport (protons and neutrons)}
\label{expC:multispecies}

Carbon primaries are transported with continuous slowing down driven by
a tabulated stopping power. Let $S_C(E)$ denote the linear stopping
power (MeV/cm) for carbon ions in water expressed in terms of the total
ion energy $E$ (MeV). As in the proton experiments, we work with an
angularly discrete representation and compute directional carbon fluence
$\Psi_i^C:D\times I\to\mathbb{R}$, $i=1,\dots,Q$, from
\begin{equation}
  \label{eq:carbon_DOM}
  \vec{\omega}_i \cdot \nabla_{\vec{x}} \Psi^C_i(\vec{x},E)
  -
  \partial_E\qp{S_C(E) \Psi^C_i(\vec{x},E)}
  +
  \sigma_T^C(E) \Psi^C_i(\vec{x},E)
  =
  \sum_{j=1}^Q \mu_j \bar\sigma_{\mathrm{el}}^C(\vec{\omega}_i,\vec{\omega}_j) \Psi^C_j(\vec{x},E),
\end{equation}
supplemented by a forward inflow carbon beam (supported only in the
central angular node, with the remaining inflow bins set to zero). The
right-hand side is treated by source iteration, with each iteration step
consisting of $Q$ independent method-of-characteristics sweeps. The
characteristic map required by the sweep is defined using a fitted
Bragg--Kleeman range law $R_{\mathrm{BK}}(E)=\alpha_C E^{p_C}$, while all
dose postprocessing uses the tabulated $S_C(E)$.

From the converged carbon solution we form the scalar carbon fluence and
dose
\begin{equation}
  \label{eq:carbon_primary_dose}
  \Phi^C(\vec{x},E) := \sum_{i=1}^Q \varpi_i \Psi^C_i(\vec{x},E),
  \qquad
  D_C^{Q}(\vec{x}) := \int_I S_C(E) \Phi^C(\vec{x},E) \d E.
\end{equation}
To drive secondaries we introduce an effective macroscopic nuclear
interaction rate $\Sigma_{\mathrm{nuc}}(E)$ and define the interaction
proxy
\begin{equation}
  \label{eq:interaction_proxy}
  \mathcal{I}^C(\vec{x})
  :=
  \int_I \Sigma_{\mathrm{nuc}}(E) \Phi^C(\vec{x},E) \d E,
\end{equation}
which concentrates secondary production in regions of high primary
fluence and high interaction probability. Secondary protons and neutrons
are then generated by volumetric sources that factorise into a spatial
production rate $\mathcal I^C(\vec x)$, an emitted-energy density and a
simple angular distribution. Concretely, we take
\begin{equation}
  \label{eq:secondary_sources}
  Q_i^P(\vec{x},E_P)
  =
  Y_P \mathcal{I}^C(\vec{x}) w_P(E_P) \frac{1}{Q},
  \qquad
  Q_i^N(\vec{x},E_N)
  =
  Y_N \mathcal{I}^C(\vec{x}) w_N(E_N) \frac{1}{Q},
\end{equation}
where $Y_P$ and $Y_N$ are yield parameters, while $w_P$ and $w_N$ are
prescribed energy densities. The uniform $1/Q$ factor corresponds to
an isotropic redistribution over the discrete ordinates; this can be
replaced by a forward-peaked emission model or by a data-driven
angular distribution without changing the solver structure.

Given the sources, secondary protons are transported with ionisation
loss only,
\begin{equation}
  \label{eq:secondary_proton_transport}
  \vec{\omega}_i \cdot \nabla_{\vec{x}} \Psi^P_i(\vec{x},E_P)
  -
  \partial_{E_P}\qp{S_P(E_P) \Psi^P_i(\vec{x},E_P)}
  =
  Q_i^P(\vec{x},E_P),
\end{equation}
with no inflow contribution, while neutrons are propagated in a reduced
streaming--removal model with no energy loss term,
\begin{equation}
  \label{eq:secondary_neutron_transport}
  \vec{\omega}_i \cdot \nabla_{\vec{x}} \Psi^N_i(\vec{x},E_N)
  +
  \sigma_T^N(E_N) \Psi^N_i(\vec{x},E_N)
  =
  Q_i^N(\vec{x},E_N),
\end{equation}
again with no inflow contribution. The associated secondary doses are
computed from the scalar secondary fluences
$\Phi^P=\sum_{i=1}^Q\varpi_i\Psi_i^P$ and $\Phi^N=\sum_{i=1}^Q\varpi_i\Psi_i^N$
as
\begin{equation}
  \label{eq:secondary_doses}
  D_P(\vec{x}) := \int_{I_P} S_P(E_P) \Phi^P(\vec{x},E_P) \d E_P,
  \qquad
  D_N(\vec{x}) := \int_{I_N} \kappa_N(E_N) \Phi^N(\vec{x},E_N) \d E_N,
\end{equation}
where $\kappa_N$ is a prescribed kerma coefficient. The total dose is
$D(\vec{x}) := D_C^{Q}(\vec{x}) + D_P(\vec{x}) + D_N(\vec{x})$.

Figure~\ref{fig:carbon_multispecies_fields} shows the spatial dose
components for a representative configuration. The carbon primary dose
$D_C^{Q}$ exhibits a pronounced Bragg peak localised near the calibrated
range. The forced secondary proton dose $D_P$ is smoother in depth and
typically produces a distal tail beyond the primary peak due to the
volumetric nature of the source and the broad emitted energy spectrum.
The neutron kerma dose $D_N$ forms a low-amplitude halo extending away
from the production region under ballistic transport with removal. The
sum $D$ illustrates how even a reduced multi-species construction can
generate post-peak and out-of-field dose structure beyond a purely
primary CSDA model.

\begin{figure}[h!]
  \centering
  \begin{subfigure}{.48\linewidth}
    \includegraphics[width=\linewidth]{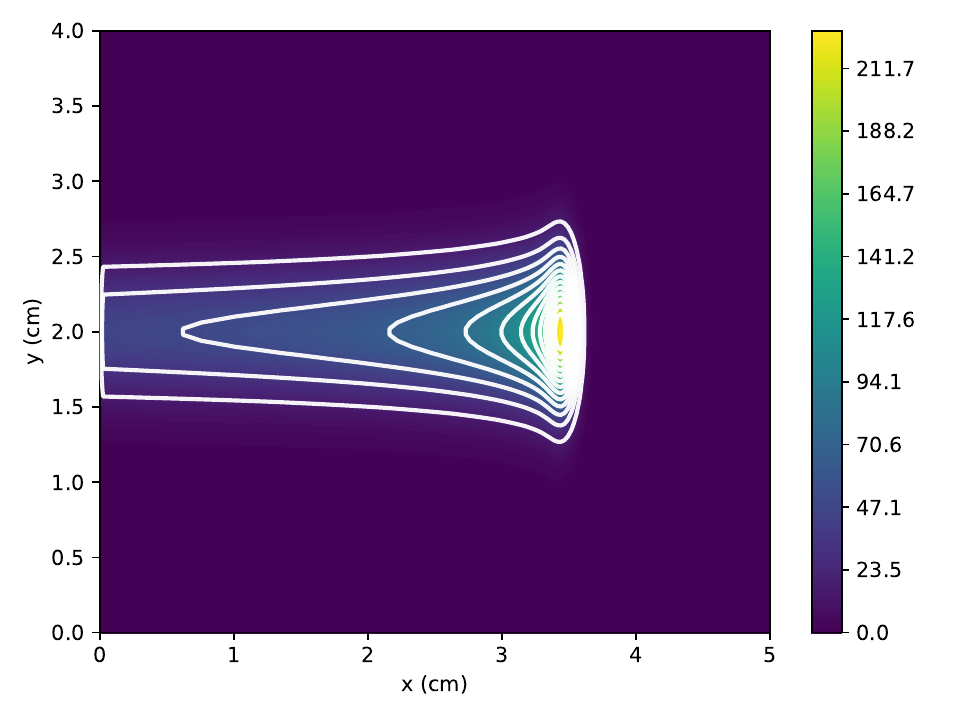}
    \caption{Carbon primary dose $D_C^{Q}$.}
    \label{fig:carbon_primary_dose}
  \end{subfigure}
  \hfil
  \begin{subfigure}{.48\linewidth}
    \includegraphics[width=\linewidth]{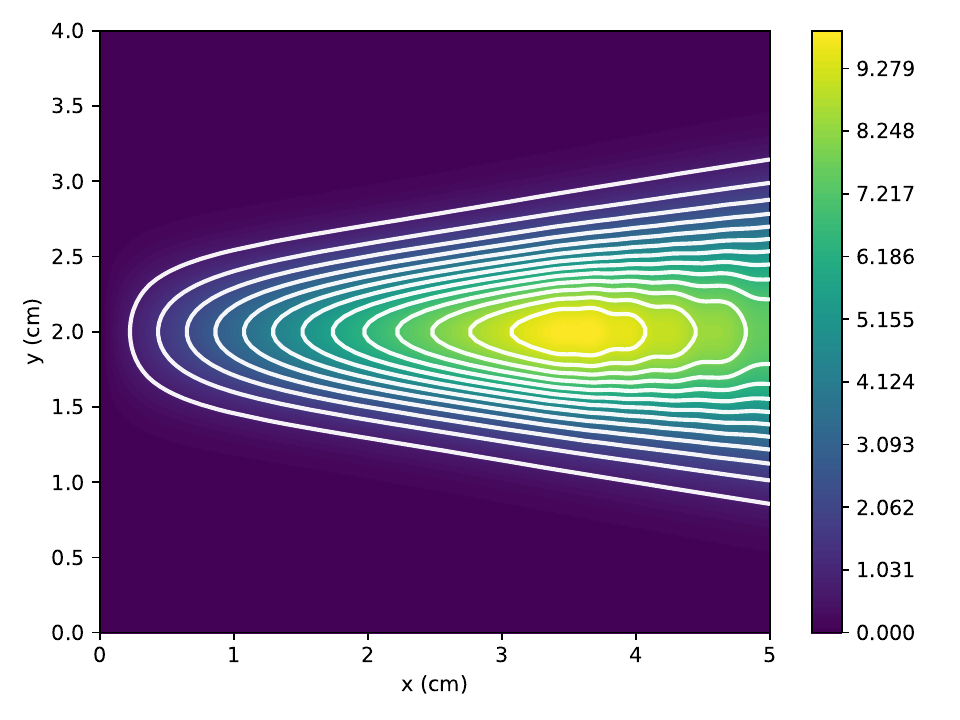}
    \caption{Secondary proton dose $D_P$.}
    \label{fig:carbon_secondary_proton_dose}
  \end{subfigure}

  \vspace{0.6em}
  
  \begin{subfigure}{.48\linewidth}
    \includegraphics[width=\linewidth]{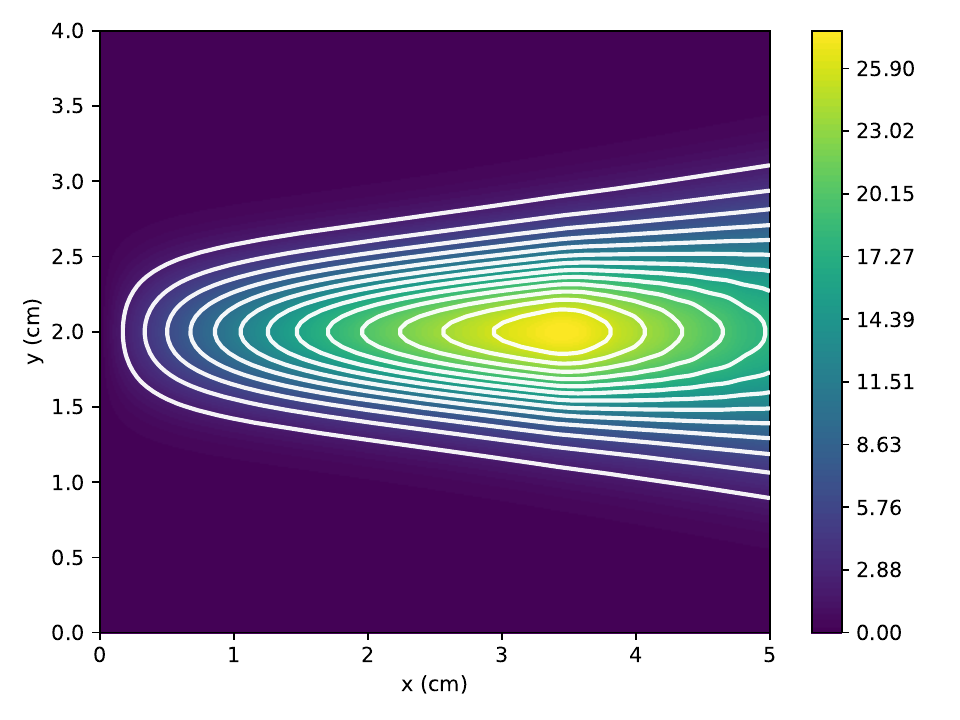}
    \caption{Neutron kerma dose $D_N$.}
    \label{fig:carbon_neutron_kerma_dose}
  \end{subfigure}
  \hfil
  \begin{subfigure}{.48\linewidth}
    \includegraphics[width=\linewidth]{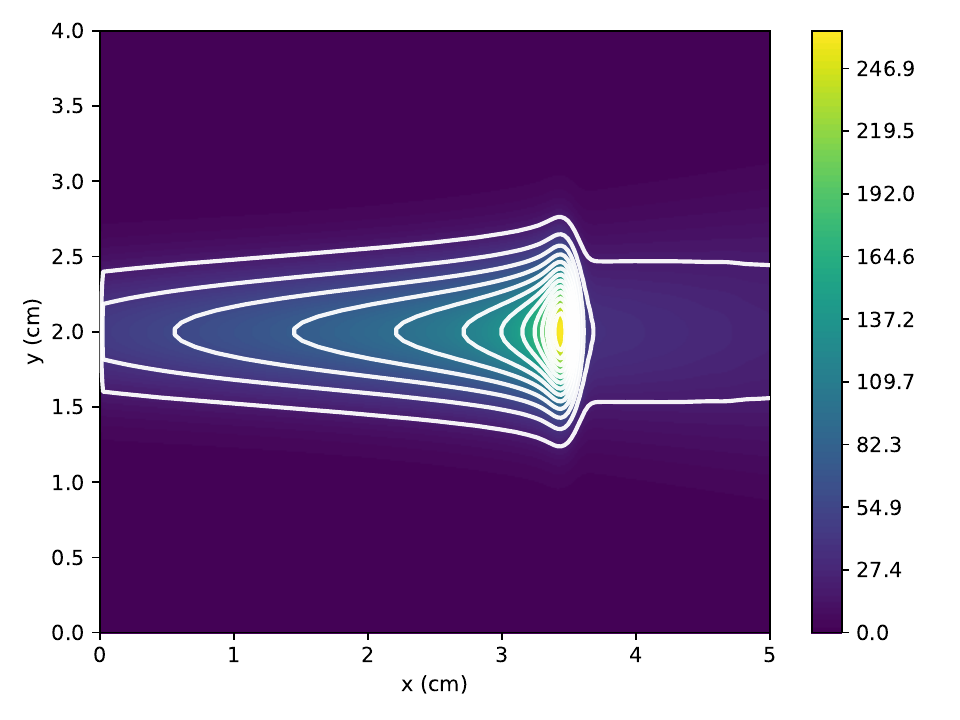}
    \caption{Total dose $D=D_C^{Q}+D_P+D_N$.}
    \label{fig:carbon_total_dose}
  \end{subfigure}
  \caption{\label{fig:carbon_multispecies_fields}
  Multi-species carbon example. Dose components computed from a scattered
  carbon primary field together with forced secondary proton and neutron
  solves driven by the interaction proxy $\mathcal I^C$.
  The secondary components are displayed on the same spatial domain to
  highlight formation of a distal tail and a low-dose halo beyond the
  primary carbon Bragg peak.}
\end{figure}

To make the distal and out-of-field behaviour easier to interpret,
Figure~\ref{fig:carbon_multispecies_depthdose} plots central depth--dose
curves extracted from a narrow band about the mid-plane, using the same
diagnostic averaging as in the proton experiments. The carbon curve
shows primary peak localisation, while the secondary proton contribution
provides a smooth continuation past the peak. The neutron kerma component
remains small in-field but extends further downstream and laterally,
consistent with the streaming--removal model.

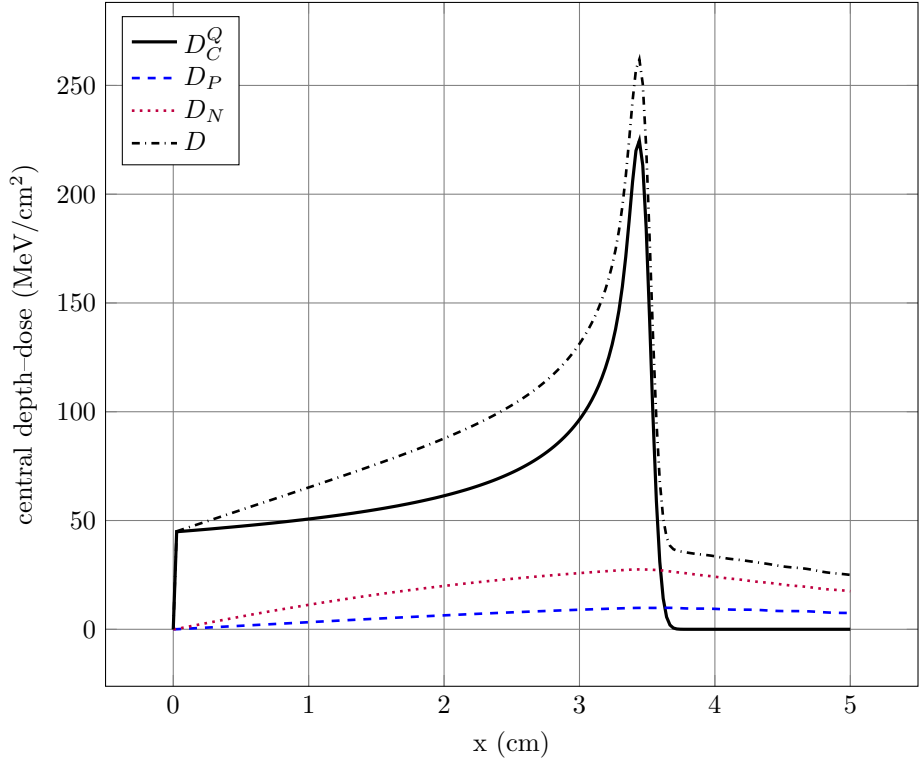
\begin{figure}[h!]
  \centering

  \pgfplotstableread[col sep=comma]{carbon_multispecies_depth_dose.csv}\depthdosedata

  \begin{tikzpicture}
    \begin{axis}[
      width=0.75\linewidth,
      grid=both,
      major grid style={black!50},
      xlabel={x (cm)},
      ylabel={central depth--dose (MeV/cm$^2$)},
      legend style={at={(0.02,0.98)},anchor=north west},
      legend cell align=left,
    ]

      \addplot[
        solid,
        color=black,
        line width=1.2
      ]
      table[x=x_cm, y=D_C]{\depthdosedata};
      \addlegendentry{$D_C^{Q}$}

      \addplot[
        dashed,
        color=blue,
        line width=1.0
      ]
      table[x=x_cm, y=D_P]{\depthdosedata};
      \addlegendentry{$D_P$}

      \addplot[
        dotted,
        color=purple,
        line width=1.0
      ]
      table[x=x_cm, y=D_N]{\depthdosedata};
      \addlegendentry{$D_N$}

      \addplot[
        dashdotted,
        color=black,
        line width=1.0
      ]
      table[x=x_cm, y=D_T]{\depthdosedata};
      \addlegendentry{$D$}

    \end{axis}
  \end{tikzpicture}

  \caption{\label{fig:carbon_multispecies_depthdose} Central
    depth--dose curves for the carbon primary dose $D_C^{Q}$,
    secondary proton dose $D_P$, neutron kerma dose $D_N$ and total
    dose $D=D_C^{Q}+D_P+D_N$, extracted by averaging over a narrow
    band about the mid-plane. The secondary proton component produces
    a smooth distal tail beyond the primary carbon peak, while the
    neutron component extends further under ballistic transport with
    removal and is therefore relevant for out-of-field exposure.}
\end{figure}

\begin{remark}[Scope of secondaries and follow-on modelling]
  Carbon--water nuclear interactions generate many secondary
  species. The present experiment is intentionally reduced. It is
  motivated by out-of-field endpoints, we retain neutrons as the
  principal long-range component and include secondary protons as a
  simple charged-fragment proxy. The source construction
  \eqref{eq:secondary_sources} is designed to be replaced by
  data-driven yields and emitted spectra and extended to additional
  fragment species and protection quantities without changing the
  transport solvers or postprocessing. This is developed further in
  the follow-on study \cite{mmb}.
\end{remark}

\section*{Acknowledgements}

This work was initiated at a workshop hosted by Mathrad, supported by
the EPSRC programme grant EP/W026899/1, which subsequently funded
TP. BA and TP also received support from the Leverhulme Trust grant
RPG-2021-238 and TP the EPSRC grant EP/X030067/1. The research was
conducted by a working group sponsored by the radioprotection theme of
the Institute for Mathematical Innovation partially funding BA.

\printbibliography

\end{document}